\newtheorem{corollary}{Corollary}
\newtheorem{lemma}{Lemma}
\newtheorem{theorem}{Theorem}
\newcommand{\R}{\mathcal{R}}
\newtheorem{mydef}{Definition}
\newtheorem{problem}{Problem}
\newtheorem{observation}{Observation}
\title[Dehn Functions, the Word Problem, and the Bounded Word Problem]{Dehn Functions, the Word Problem, and the Bounded Word Problem For Decidable Group Presentations}
\author{Desmond Cummins}
\date{\today}
\begin{document}

\maketitle

\begin{abstract}

We construct examples of finitely generated decidable group presentations that satisfy certain combinations of solvability for the word problem, solvability for the bounded word problem, and computablity for the Dehn function.  We prove that no finitely generated decidable presentations exist satisfying the combinations for which we do not provide examples.  The presentations we construct are minimal.  These constructions answer an open question asked by R.I. Grigorchuk and S.V. Ivanov.

Our approach uses machinery developed by Birget, Ol'shanskii, Rips, and Sapir for constructing finite group presentations that simulate Turing machines.  We generalize this machinery to construct finitely generated decidable group presentations that simulate computing objects similar to oracle Turing machines.

\end{abstract}

\section{Introduction}

It is well known that the word problem is solvable for a finite presentation $P$ if and only if the Dehn function of $P$ is computable.  Moreover, the bounded word problem is in $\textbf{NP}$ for all finite presentations.  If $P$ is a finitely generated decidable presentation, it is not evident whether the solvability of the word problem is equivalent to the computability of the Denh function.  In \cite{GI}, the authors Grigorchuk and Ivanov pose the following problem:

\begin{problem} \label{prob}

\cite{GI} Let the relator set $\R$ of
a finitely generated presentation $P$ be decidable.
Prove or disprove that

$(\mathrm{a})$  If the word problem for $P$ is
solvable,
 then the Dehn function $f(x)$  of P  is
 computable.

$(\mathrm{b})$   If the Dehn function  $f(x)$  of
$P$ is computable,  then the word problem for
$P$ is solvable.
\end{problem}

A solution to Problem 1(b) is given via a counterexample in \cite[Example 2.4]{GI}.  It is also pointed out in \cite{GI} that it would be of interest to consider a stronger and presumably more difficult version of Problem 1 in which the relator set $\R$ is assumed to be {\em minimal} (i.e. for all $\R' \subseteq \R$, if the normal closures $\langle \langle \R'  \rangle \rangle$ and $\langle \langle \R  \rangle \rangle$ are equal, then $\R' = \R$).  Note that the presentation given as counter example to Question 1(b) in \cite[Example 2.4]{GI} is not minimal.\\

We further expand Question \ref{prob} to consider the solvability of the bounded word problem as well.  There are eight possibilities for the solvability of the word problem, the solvability of the bounded word problem, and the computability of the Dehn function, as shown in the following table.  For example, case \textbf{2} refers to the possibility of a presentation having solvable word problem and solvable bounded word problem, but uncomputable Dehn function.  It is straightforward to prove that finitely generated presentations satisfying cases $\textbf{2}$ and $\textbf{5}$ do not exist (see Lemma \ref{no2no5} in Section 3).  The purpose of this paper is to construct finitely generated decidable minimal presentations to satisfy each of the remaining cases.\\

\begin{center}

\begin{tabular}{|l||c|c|c|c|c|c|c|r|}

\hline

Case Number                      & \textbf{1} & \textbf{2} & \textbf{3} & \textbf{4} & \textbf{5} & \textbf{6} & \textbf{7} & \textbf{8} \\ \hline

Solvable Word Problem            & y & y & y & y & n & n & n & n \\ \hline

Solvable Bounded Word Problem    & y & y & n & n & y & y & n & n \\ \hline

Computable Dehn Function         & y & n & y & n & y & n & y & n \\

\hline

\end{tabular}

\end{center}

\vspace{.5cm}

There has been a significant amount of previous research done that involves constructing group presentations that, in some sense, simulate Turing machines.  One such result is the classical Novikov-Boone-Higman-Aanderaa embedding of a finitely generated group $G$ into a finitely presented group $H$ \cite{Rot}. In \cite{MO}, Madlener and Otto explored the idea of constructing such an embedding so that the Dehn function of the presentation for $H$ was not ``much bigger" than the time complexity of the Turing machine it simulates.  In \cite{BRS} and \cite{BORS}, Birget, Ol'shanski, Rips, and Sapir construct such an embedding in which the Dehn function of the presentation $H$ is equivalent to the fourth power of the time function of the Turing machine (provided the fourth power of time function is superadditive).  In these papers, the authors prove the remarkable Theorem \ref{bigdeal}, which will be our primary tool in this paper.  See Section 2 for the definition of {\em superadditive}.

\begin{theorem}\label{bigdeal}

Let $L\subseteq A^*$ be a language accepted by a Turing machine $M$ with time function $T(n)$ for which $T(n)^4$ is superadditive.  Then there exists a finite group presentation $P(M)$ with generating set $X$ and with Dehn function equivalent to $T(n)^4$.  Also, there exists an injective map $\mathcal{K}:A^*\rightarrow (X\cup X^{-1})^*$ such that

\begin{enumerate}

\item
$\textbf{u}\in L$ if and only if $\mathcal{K}(\textbf{u})=1$ in $P(M)$;

\item
$\mathcal{K}(\textbf{u})$ has length $O(|\textbf{u}|)$.  There is a linear-time algorithm that takes as input a word $\textbf{u}$ in $A^*$ and outputs $\mathcal{K}(\textbf{u})$.

\item
If $\textbf{u}\in L$, and $\ell(\textbf{u})$ is the length of a minimal length accepting $M$ computation for $\textbf{u}$, then the minimal area $P(M)$ diagram with boundary label $\mathcal{K}(\textbf{u})$ has area equal to $O(\ell(\textbf{u})^4)$.

\end{enumerate}

\end{theorem}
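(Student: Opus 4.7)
The plan is to follow the Birget--Ol'shanskii--Rips--Sapir program: replace the Turing machine $M$ with a more symmetric combinatorial device whose transitions can be encoded directly as defining relators, then promote that device's presentation to a group presentation via a sequence of HNN extensions that symmetrize the rewriting while tightly controlling the resulting Dehn function. Throughout, both the upper and lower bounds on the Dehn function will come from an explicit correspondence between van Kampen diagrams for $P(M)$ and accepting computations of the machine.

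Step one is to build an auxiliary ``$S$-machine'' $S$ that simulates $M$ and whose time function is at most polynomially (in fact, quadratically) larger than $T$. Unlike a Turing machine, an $S$-machine uses transition rules that are symmetric and closed under inversion, so each transition can be turned into a group relation without having to fight orientation. Step two is to encode $S$ as a finite semigroup (in fact monoid) presentation whose generators are the tape alphabet together with the state letters, and whose relators are the symmetrized transitions. Step three is to pass to a group presentation $P(M)$ using a carefully chosen chain of HNN extensions that introduces ``hub'' relators designed so that cycling a particular ``admissible'' word around a disk forces a full computational band of $S$ to appear inside. The word $\mathcal{K}(\mathbf{u})$ is then defined as the hub relator together with a prefix that inserts the input $\mathbf{u}$ in the tape position of the start configuration; the linear-time encoding is immediate from this definition, and injectivity and the linear length bound are built into the construction.

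For the upper bound on the Dehn function, if $\mathbf{u}\in L$ has an accepting computation of length $\ell(\mathbf{u})$, I would explicitly assemble a van Kampen diagram for $\mathcal{K}(\mathbf{u})$: the simulation in $S$ yields a computational trapezium of area $O(\ell(\mathbf{u})^2)$, and the HNN stacking introduced in step three squares the area, giving $O(\ell(\mathbf{u})^4)$. This takes care of statement (3) and the $\preceq$ direction of the Dehn function equivalence. For the lower bound, I would analyze an arbitrary reduced diagram $\Delta$ with boundary $\mathcal{K}(\mathbf{u})$: the symmetry of the $S$-machine relators together with the structure of the hub forces $\Delta$ to decompose into nested ``bands'' of maximal HNN letters, and peeling these off reveals at the core a trapezium that, read along its sides, spells out an accepting $S$-machine computation on $\mathbf{u}$. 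The minimality of the length of such a computation then forces the core area to be $\Omega(\ell(\mathbf{u})^2)$, and the surrounding bands inflate this to $\Omega(\ell(\mathbf{u})^4)$.

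The main obstacle is the lower-bound/rigidity half: showing that no clever diagram can shortcut the simulation. This requires proving that the bands in any reduced diagram are genuinely rectangular and cannot annihilate with one another, which is the heart of the BORS technology and relies on the non-cancellation of the $S$-machine alphabet in the various HNN steps and on the fact that the hub relator admits essentially one way of being filled in. Handling superadditivity (so that the pointwise bound $O(\ell(\mathbf{u})^4)$ really translates into a bound on the Dehn function $f(n) = \max_{|w|\leq n} \mathrm{Area}(w)$) is a routine but necessary hypothesis at the end to convert the per-input estimates into an equivalence of functions of $n$.
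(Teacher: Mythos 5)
The paper does not give its own proof of Theorem~1.1; the statement is quoted directly from BRS (parts~1 and~2 are \cite[Theorem~1.3]{BRS}, and part~3 is deduced from \cite[Proposition~4.1, part~3]{BRS} together with the proof of \cite[Proposition~12.1]{BRS}). Your sketch does capture the architecture of the BRS argument that the citation rests on: symmetrize $M$ to $M'$, simulate by an $S$-machine $S(M')$, realize $S(M')$ as an HNN extension of a free group, adjoin the $\kappa$-letters and a hub relator so that $\mathcal{K}(\mathbf{u})$ can only be filled by a disc surrounded by computational trapezia, and establish rigidity of reduced diagrams by a band/annulus analysis. You also correctly isolate the crux (non-cancellation of bands, essentially unique way to fill the hub) and the role of superadditivity in converting per-input area estimates into a Dehn function equivalence.

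Two points of your sketch do not match BRS. First, $S(M')$ is already a group presentation (an HNN extension of the free group on tape and state letters); there is no intermediate finite semigroup/monoid presentation stage. Second, and more substantively, your bookkeeping for the exponent~$4$ is off. The hub and $\kappa$-letter construction surrounds each disc with a \emph{bounded} number ($2N$) of copies of the $S$-machine trapezium, so it multiplies area by a constant, not by $\ell^2$, and the discs themselves contribute only a term quadratic in the perimeter. The fourth power in BRS is generated by the $S$-machine's own overhead: simulating each step of $M'$ requires shuttling the running state letter across tapes of length $O(T)$, so $S(M')$'s time is a strictly higher power of $T$, and the relevant diagram-area bound is controlled by the $S$-machine's area function (time $\times$ space), which is what yields $T^4$. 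Your proposed mechanism, ``$S$-machine time at most quadratically larger than $T$, trapezium of area $O(\ell^2)$, HNN stacking squares to $O(\ell^4)$,'' lands on the right number but would not actually reproduce it: nothing in the HNN/hub layer squares area, and a quadratic-time $S$-machine would not generate a quartic filling on its own.
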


Parts 1 and 2 of the above theorem are exactly as in\cite[Theorem 1.3]{BRS}.  Part 3 of the above theorem is a direct consequence of part 3 of \cite[Proposition 4.1]{BRS} and the proof of \cite[Proposition 12.1]{BRS}.  \\

Note that in the statement of \cite[Theorem 1.3]{BRS}, the authors do not mention the presentation $P(M)$.  Instead, they phrase the result as a statement about the group $G(M)$ which has $P(M)$ as its presentation.  The reason for our departure from this notation of \cite{BRS} is that later in this paper we will be using results analogous to Theorem \ref{bigdeal} to construct decidable presentations with certain Dehn functions.  Unlike in the case of finite presentations, distinct decidable presentations of the same group do not necessarily have equivalent Dehn functions.  For this reason we find it necessary to write all our results as statements about group presentations instead of statements about the groups they present.  In keeping with this convention, we will often phrase statements about groups as statements about the presentations of those groups.  For example, if a group $G$ has presentation $P$, we will write ``the word problem is solvable for $P$" instead of ``the word problem is solvable for $G$".  Also, if $G'$ is a group with presentation $P'$, we will write ``there is an embedding of $P$ into $P'$" to indicate that there is an embedding of $G$ into $G'$.  Additionally, we will write ``$w$ is a word in $P$" to indicate that $w$ is a word in the generators of $P$.\\

Theorem \ref{bigdeal} is not immediately applicable to our problem since the presentation $P(M)$ is finite.  The finiteness of $P(M)$ is due to the fact that the size of the sets of generators and relators of $P(M)$ depends on the size of the sets of commands, states, and alphabet letters of $M$.  Each of these sets is finite.  Our constructions will involve generalizing the methods of \cite{BRS} to construct group presentations from countably infinite objects that are analogous to oracle Turing machines.  We will call these objects {\em union machines}.\\

In Section 2, we will formally define union machines. Intuitively, a union machine $M_{\infty}$ is a tuple of sets that satisfies every condition in the definition of a Turing machine with the one exception that the some of the sets that constitute $M_{\infty}$ may be countably infinite. If the tuple of sets that constitutes a union machine $M_{\infty}$ are all computably enumerable (or c.e.), then $M_{\infty}$ is said to be c.e..\\

It happens that the proof of Theorem \ref{bigdeal} in \cite{BRS} actually proves a more general result than Theorem \ref{bigdeal}.  This is because neither the construction of $P(M)$ from $M$ nor the proof of Theorem \ref{bigdeal} rely in any way on the finiteness of the sets that constitute $M$.   Given a union machine $M_{\infty}$, not only can we construct a group presentation $P(M_{\infty})$ from $M_{\infty}$ in the exact same way that $P(M)$ is constructed from $M$, but a proof identical to that of Theorem \ref{bigdeal} in \cite{BRS} proves the following result about $P(M_{\infty})$.\\

\begin{theorem} \label{smalldeal}

Let $L\subset A^*$ be a language accepted by a c.e. union machine $M_{\infty}$ with time function $T(n)$ such that $T(n)^4$ is equivalent to a superadditive function.  Then there exists a countably generated minimal c.e. group presentation $P(M_{\infty})$ with generating set $X$ and with Dehn function equivalent to $T(n)^4$.  Also, there exists an injective map $\mathcal{K}:A^*\rightarrow (X\cup X^{-1})^*$ such that

\begin{enumerate}

\item
$\textbf{u}\in L$ if and only if $\mathcal{K}(\textbf{u})=1$ in $P(M_{\infty})$;\\

\item
$\mathcal{K}(\textbf{u})$ has length $O(|\textbf{u}|)$.  There is a linear-time algorithm that takes as input a word $\textbf{u}$ in $A^*$ and outputs $\mathcal{K}(\textbf{u})$.\\

\item
If $\textbf{u}\in L$, and $\ell(\textbf{u})$ is the length of a minimal length accepting $M_{\infty}$ computation for $\textbf{u}$, then the minimal area $P(M_{\infty})$ diagram with boundary label $\mathcal{K}(\textbf{u})$ has area equal to $O(\ell(\textbf{u})^4)$.

\end{enumerate}

\end{theorem}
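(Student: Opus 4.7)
The plan is to define $P(M_\infty)$ by applying the construction of \cite{BRS} verbatim, replacing the finite sets of commands, states, and tape letters of the Turing machine $M$ by the (possibly countably infinite) analogous sets of the union machine $M_\infty$. The BRS construction is local in character: each generator corresponds to a single state or tape letter, and each relator corresponds either to a single command or to an auxiliary piece (hubs, commutation relators, frame relators) whose form depends only on the particular object it references, not on the cardinality of any ambient set. Hence the construction extends to $M_\infty$ with no formal modification, yielding a countably generated presentation $P(M_\infty)$.

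To obtain properties (1), (2), (3) and the Dehn function estimate $f \sim T(n)^4$, I would appeal directly to the proof of Theorem \ref{bigdeal}. That proof analyzes accepting computations of the machine, shows how to build van Kampen diagrams of the appropriate area from them, and conversely shows that any van Kampen diagram over the presentation can be rearranged to mirror such a computation. Any single computation and any single van Kampen diagram involves only finitely many commands, states, and relators, so every step of the BRS argument is insensitive to whether the underlying machine is finite or merely c.e. The encoding map $\mathcal{K}$ is itself defined letter-by-letter on $A$, so parts (1) and (2) translate without change, and the area estimate of part (3) together with the matching upper and lower bounds on the Dehn function follow from the same local diagrammatic reasoning.

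The two new ingredients beyond what \cite{BRS} provides are that $P(M_\infty)$ is c.e. and minimal. Being c.e. is a routine verification: the BRS recipe is a computable function from the descriptions of commands, states, and letters to the corresponding generators and relators, so a c.e. enumeration of the data of $M_\infty$ pushes forward to a c.e. enumeration of the generators and relators of $P(M_\infty)$. Minimality is where the main obstacle lies, since it is not addressed in \cite{BRS}. To establish it I would classify the BRS relators into their structural families (hub relators, transition relators attached to individual commands, commutation and frame relators attached to individual letters) and, for each relator $r \in \R$, construct a quotient of the free group on $X$ in which every element of $\R \setminus \{r\}$ maps to the identity while $r$ does not, thereby witnessing $r \notin \langle\langle \R \setminus \{r\} \rangle\rangle$. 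Such separating quotients are plausible because each BRS relator encodes a distinct local feature of $M_\infty$, but producing them uniformly across all relator families and verifying that no relator becomes redundant in the countably infinite setting is the delicate part of the argument.
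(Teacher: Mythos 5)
Your overall framing matches the paper's: Theorem~1.2 is proved by running the BRS construction verbatim on $M_\infty$, observing that every step of the BRS argument (constructing van Kampen diagrams from computations and vice versa) involves only finitely many commands/states/relators at a time and so is insensitive to whether the ambient sets are finite or merely c.e., and then supplying the two new claims: that $P(M_\infty)$ is c.e.\ (which is indeed routine, and the paper records it in Observation~\ref{symce} and Lemmas~\ref{smachce} and~\ref{presce}) and that it is minimal.

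However, your treatment of minimality is both different from the paper's and incomplete. You propose, for each relator $r$, to ``construct a quotient of $F(X)$ in which $\R\setminus\{r\}$ maps to the identity while $r$ does not.'' As stated this is circular: the natural candidate for such a quotient is $\langle X\mid \R\setminus\{r\}\rangle$ itself, and proving $r$ nontrivial there is exactly the content of minimality, not a route to it; you acknowledge you do not carry the step out. The paper's actual argument (Lemma~\ref{minimal1}) is diagrammatic and exploits the band machinery of $P_D(M_\infty)$: if a non-hub relator $\rho\,\textbf{x}_\ell\,\rho^{-1}\textbf{y}_\ell^{-1}$ were redundant, the resulting minimal $P_D(M_\infty)$ diagram would have to contain no hubs (too many boundary $\kappa$-edges would be forced, by Lemma~\ref{kappahub}), contain no $\hat\Theta$ annuli (Lemma~\ref{noThetaAn}), and hence reduce to a single $\rho$-band, which is impossible because the top words of the $\rho$-relators freely generate a subgroup of the base of the HNN extension (the Nielsen-reduced structure established in Lemma~\ref{hnn}); similarly, deleting the hub leaves no way to bound any disc label, since disc labels contain no $\hat\Theta$ edges. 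This is the argument your proposal needs and does not provide. A smaller point: you also do not address why the hypothesis can be weakened from ``$T^4$ superadditive'' (as in Theorem~1.1) to ``$T^4$ equivalent to a superadditive function''; the paper notes that the BRS proof only ever uses superadditivity up to equivalence.
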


There are three minor details of Theorem \ref{smalldeal} that must be addressed in order to use the arguments of \cite{BRS} as a proof of Theorem \ref{smalldeal}.  The first such detail is the claim in Theorem \ref{smalldeal} that if $M_{\infty}$ is c.e. then $P(M_\infty)$ is c.e..  Since \cite{BRS} only deals with Turing machines and finite presentations, this claim is not explicitly mentioned anywhere in \cite{BRS}.  However, it follows immediately from the constructions in \cite{BRS} that $P(M_{\infty})$ is c.e. if $M_{\infty}$ is c.e..  We provide some citation for this in Observation \ref{symce} and in Lemmas \ref{smachce} and \ref{presce}.\\

Secondly, Theorem \ref{bigdeal} makes no mention of the minimality of the presentation $P(M)$.  However, both $P(M)$ and $P(M_{\infty})$ are minimal.  This is a straightforward consequence of the structure of the HNN extensions used to construct $P(M)$ and $P(M_{\infty})$, as is explained further in the proof of Lemma \ref{minimal1}.\\

Finally, note that we have weakened the requirement in Theorem \ref{bigdeal} that $T^4(n)$ must be superadditive to the requirement in Theorem \ref{smalldeal} that $T^4(n)$ must be equivalent to a superadditive function.  We can do this because proof of Theorem \ref{bigdeal} in \cite{BRS} only uses superadditivity of $T^4$ up to equivalence, and does not rely on the actual superadditivity of $T^4$. This can be quickly verified by reading page 355 of \cite{BRS}, where the authors give a brief summary of the proof of Theorem \ref{bigdeal}.\\

It should be noted that Dehn functions of countably generated presentations are not always as well behaved as those of finite presentations.  In particular, they are not always well defined \cite{Os}.  This is not a concern for the presentation $P(M_{\infty})$, whose Dehn function is guaranteed to be well defined by Theorem \ref{smalldeal}.\\

Our desired examples of group presentations are decidable, minimal, and finitely generated.  We also require some control of the solvability of the word problem for our presentations.  We will use Theorem \ref{smalldeal} to prove the following theorem.\\

\begin{theorem}\label{main}

Let $L\subset A^*$ be a language accepted by a c.e. union machine $M_{\infty}$ with time function $T(n)$ such that $T(n)^4$ is superadditive.  There exists a finitely generated decidable minimal presentation $P'_1(M_{\infty})$ with generating set $\{a,b\}$ that has the following properties.  There exists an injective map $h\circ \mathcal{K}:A^*\rightarrow \{a^{\pm1},b^{\pm1}\}^*$ such that:

\begin{enumerate}

\item
For an input word $\textbf{u}$ of $M_{\infty}$, the word $h(\mathcal{K}(\textbf{u}))$ is trivial in $P'_1(M_{\infty})$ if and only if $\textbf{u}$ is an acceptable input of $M_{\infty}$.

\item
The word problem for $P'_1(M_{\infty})$ is solvable if and only if $L$ is decidable.

\item
$P'_1(M_{\infty})$ has Dehn function equivalent to $T(n)^4$.

\item
Suppose $\textbf{u}\in L$, and a minimal length accepting $M_{\infty}$ computation for $\textbf{u}$ has length $\ell(\textbf{u})$.  Then the minimal area $P'_1(M_{\infty})$ diagram with boundary label $h(\mathcal{K}(\textbf{u}))$ has area equal to $O(\ell(\textbf{u})^4)$.

\end{enumerate}

\end{theorem}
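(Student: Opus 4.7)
The plan is to deduce Theorem \ref{main} from Theorem \ref{smalldeal} via a reduction to two generators that preserves the Dehn function up to equivalence, maintains minimality, and upgrades the c.e.\ presentation to a decidable one. The first step is to apply Theorem \ref{smalldeal} to obtain the countably generated minimal c.e.\ presentation $P(M_{\infty}) = \langle X \mid \R \rangle$ together with the injection $\mathcal{K}$ satisfying properties (1)--(3) there.

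Second, I would define an injective map $h$ from $(X \cup X^{-1})^*$ into $\{a^{\pm 1}, b^{\pm 1}\}^*$ by sending each $x_i \in X$ to a short fixed word such as $a^i b a^{-i}$, chosen so that the $h(x_i)$ form a basis of a free subgroup of $F_2 = \langle a, b \rangle$ and so that distinct reduced words in $X$ map to reduced words in $\{a,b\}$. The map $h$ is linear-time computable. Using $h$, I would then build $P'_1(M_{\infty})$ as a 2-generator presentation whose relators are the images $\{h(r) : r \in \R\}$, together with auxiliary relators coming from a Higman-Neumann-Neumann style HNN or amalgamated-product construction used to identify each $x_i$ with $h(x_i)$. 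Chosen carefully, this realizes the group presented by $P(M_{\infty})$ as a subgroup of the group presented by $P'_1(M_{\infty})$, with $h$ inducing the embedding. By this embedding, $h(\mathcal{K}(\mathbf{u})) = 1$ in $P'_1(M_{\infty})$ if and only if $\mathcal{K}(\mathbf{u}) = 1$ in $P(M_{\infty})$, which by Theorem \ref{smalldeal}(1) holds exactly when $\mathbf{u}$ is acceptable, yielding property (1) of Theorem \ref{main}.

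For properties (3) and (4), I would verify that the embedding construction changes minimal diagram area by at most a linear factor: a minimal-area van Kampen diagram in $P'_1(M_{\infty})$ with boundary $h(w)$ is obtained from a minimal-area diagram in $P(M_{\infty})$ with boundary $w$ by substituting bounded-area disk patterns for each original relator, and vice versa. This yields Dehn function equivalence to $T(n)^4$ together with the area bound $O(\ell(\mathbf{u})^4)$ inherited from Theorem \ref{smalldeal}(3). Property (2) then follows because the word problem for $P'_1(M_{\infty})$ is Turing-equivalent, via the embedding, to the triviality problem for $\mathcal{K}(\mathbf{u})$ in $P(M_{\infty})$, which is Turing-equivalent to $L$-membership by Theorem \ref{smalldeal}(1); hence $L$ is decidable if and only if the word problem for $P'_1(M_{\infty})$ is solvable.

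The hard part will be arranging decidability of $P'_1(M_{\infty})$ while simultaneously preserving minimality and the Dehn function. Since $\R$ is only c.e., the relator set $\{h(r) : r \in \R\}$ is not automatically decidable, and naive padding tricks risk inflating the Dehn function. The delicate point of the argument will be to choose the encoding $h(x_i) = a^i b a^{-i}$ and the auxiliary relations so that the length of each relator $h(r)$ grows with the largest generator index appearing in $r$; this will make membership in the relator set testable by a bounded enumeration whose cutoff depends only on word length, without damaging the $T(n)^4$ equivalence. Minimality should then follow by arguing, as in the proof of Lemma \ref{minimal1}, that the HNN/amalgamation structure prevents any relator from being a consequence of the others.
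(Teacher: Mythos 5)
The broad strategy---encode the generators of the countably generated presentation $P(M_{\infty})$ as words in $F(a,b)$, take the $h$-images of the relators as the relator set of a two-generator presentation, and transfer properties through the resulting embedding---is indeed the strategy the paper uses. But several of the specific choices you make are wrong or unworkable, and they are precisely the places where the real difficulty lies.

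First, the encoding $h(x_i) = a^i b a^{-i}$ cannot support the lower bound on the Dehn function. You claim a minimal diagram in $P'_1(M_{\infty})$ with boundary $h(w)$ ``is obtained from'' a minimal diagram in $P(M_{\infty})$ with boundary $w$, but this direction is exactly the hard one: there is no a priori reason a minimal $P'_1(M_{\infty})$ diagram decomposes along the $h$-images of $P(M_{\infty})$ edges. The paper's Lemmas \ref{fix1} and \ref{fix3} settle this by a folding-surgery argument whose engine is Greendlinger's Lemma for a $C'(1/10)$ small cancellation presentation $\langle a,b \,\|\, h(g), g\in Z\rangle$. Your encoding fails small cancellation badly: $h(x_i)$ and $h(x_j)$ with $i\approx j$ share the piece $a^{\min(i,j)}$, whose length is roughly half of either word. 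The paper instead uses $h(g_i)=a^{100}b^i a^{101}b^i\cdots a^{199}b^i$, engineered so that pieces are short relative to word lengths. Relatedly, before applying $h$, the paper first replaces $P(M_{\infty})$ by an equivalent presentation $P_1(M_{\infty})$ whose relators are strictly positive or strictly negative words, so that $h$-images of relators are reduced words with no cancellation at the seams; with $h(x_i)=a^iba^{-i}$ you would get substantial free reduction inside $h(r)$ and would lose the ability to read off the original relator unambiguously.

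Second, the decidability argument does not hold up. You propose to decide membership in the relator set of $P'_1(M_{\infty})$ by a ``bounded enumeration whose cutoff depends only on word length.'' But a c.e.\ enumeration carries no guarantee about when a given element shows up; bounding the \emph{index} of generators that can appear does not bound the \emph{time} one must wait in the enumeration. What actually makes the paper's construction work (Lemma \ref{finaldecidable}) is the special structure of $P(M_{\infty})$: each command letter is a self-describing symbol $\tau_{\vec{x}}$, and by Lemma \ref{comenu} one can algorithmically reconstruct from it the finitely many transition relators in which it appears. So given a candidate word $w$, one can directly decide whether it is a relator, without consulting the enumeration at all. This is a structural property of the BRS-style presentation, not a generic consequence of a length-inflating encoding.

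Third, the HNN or amalgamated-product layer you add is both unnecessary and counterproductive: it would introduce additional generators (or force you to re-encode the stable letters in $\{a,b\}$ as well), muddy the Dehn function comparison, and, if done by the classical Higman--Neumann--Neumann embedding, it would not even preserve the Dehn function up to equivalence. The paper simply declares the relator set of $P'_1(M_{\infty})$ to be $\{h(r): r\in R\}$ and cites \cite[Lemma 8]{FG} that $h$ already induces an embedding; no auxiliary HNN relations are needed. Your instinct that minimality should then follow as in Lemma \ref{minimal1} is right in spirit, but the correct argument (Lemma \ref{finalminimal}) runs through the embedding $h$ rather than through any new HNN structure.
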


In Section 2 we give the formal definitions of our objects of study (Turing machines, group presentaitons, etc.).  In Section 3, we will show how to use Theorem \ref{main} to construct the desired examples of group presentations.  The rest of the paper will be devoted to proving Theorem 3.\\

This proof will consist of two main parts.  The first part is proving that the word problem for $P(M_{\infty})$ is solvable if and only if the language accepted by $M_{\infty}$ is decidable.  This is done in three stages.  The first stage is proving that the symmetrization $M'_{\infty}$ of $M_{\infty}$ has solvable configuration problem if and only if the language accepted by $M_{\infty}$ is decidable.  This is done in Section 4.  The second stage is proving that $M'_{\infty}$ has solvable configuration problem if and only if the $S$-machine $S(M'_{\infty})$ has solvable configuration problem.  This is done in Section 5.  The third stage is proving that the word problem for $P(M_{\infty})$ is solvable if and only if the configuration problem for $S(M'_{\infty})$ is solvable.  This is done in Section 8.\\

The second part uses Theorem \ref{smalldeal} to complete the proof of Theorem 3.  This proof will involve constructing the finitely generated presentation $P'_1(M_{\infty})$ from $P(M_{\infty}$ in) such a way that $P'_1(M_{\infty})$ inherits desired properties from $P(M_{\infty})$.  This construction will be nearly identical to a similar construction performed in \cite{FG}.  It will have the additional fortunate property that $P'_1(M_{\infty})$ is decidable if $P(M_{\infty})$ is c.e..  Conveniently, several of the lemmas required for this part are already proven in \cite{FG}.  This will part will be done in Section 9.\\

To keep this paper at a manageable length, we have cited the results and proofs from \cite{BRS}, \cite{BORS}, and \cite{FG} wherever possible.  In order to make these citations easily verifiable, we have made an effort to keep our definitions and notations as close as possible those found in \cite{BRS}, \cite{BORS}, and \cite{FG}.  Also, since \cite{BRS} and \cite{BORS} are not short, we have provided page numbers along with citations whenever is seemed appropriate.\\

\section{Preliminaries and Definitions}

If $A$ is a set of symbols, we use $A^*$ to denote the set of finite sequences of symbols of $A$ (including the empty sequence, denoted $\varepsilon$).  If $L\subseteq A^*$, then we call $L$ a {\em language} over $A$.  For $L\subseteq A$, we say that $L$ is {\em decidable} if there is a Turing machine $M$ that accepts an input $\textbf{u}\in A^*$ if $\textbf{u}\in L$ and halts in a non-accept state $\textbf{u}$ if $\textbf{u}\notin L$.  We say that $L$ is {\em computably enumerable} or {\em c.e.} if there is a Turing machine $M$ that accepts an input $\textbf{u}\in A$ if and only if $\textbf{u}\in L$ (note that if $\textbf{u}$ is not in $L$ then $M$ need not halt on input $\textbf{u}$).  Intuitively, a set is c.e. if it is possible to algorithmically produce a list of its elements.  If a process or a construction can be performed algorithmically, we say that the process or construction is {\em effective}.\\

A group presentation is a set of generators and a set of defining relators

\begin{equation}\label{pr3}
P  = \langle \ X \ \| \ R  \ \rangle \ ,
\end{equation}

\smallskip

\noindent where $X = \{ a_1, a_2, \dots \}$ is a countable alphabet and $R$ is a set of nonempty cyclically reduced words over the alphabet $X^{\pm 1} = X \cup X^{-1}$ (we assume that $R$ is closed under taking inverses and cyclic conjugates).  Let $F(X)$ denote the free group over $X$, $|w|$ denote the length of an element $w \in F(X)$, and $\langle\langle R \rangle\rangle $ denote the normal closure of $R$ in $F(X)$.  The presentation \eqref{pr3} denotes the quotient group $G=F(X)/\langle\langle R \rangle\rangle $. Recall that a presentation \eqref{pr3} is called {\em finite} if $X$ and $R$ are finite.\\

A function $f: \textbf{N} \to \textbf{N}$ is called an {\em isoperimetric function} of a presentation $P=\langle X \| R\rangle$ if for every number $n$ and every word $w$ trivial in $P$ with $|w| < n$, there exists a van Kampen diagram over $P$ with boundary label $w$ and area $< f(n)$  (or, equivalently, $w$ is a product of at most $f(n)$ conjugates of the relators from $R$, see \cite{MO}, \cite{Gr2}, \cite{Gersten}, \cite{BMS}).  The smallest isoperimetric function of a decidable presentation $P$ is called the {\em Dehn function} of $P$.\\

A presentation $P=\langle X \| R\rangle$ is {\em minimal} if if fulfils the following condition.  For a relator set $R'$ over $X$ with $R'\subseteq R$, if $R$ and $R'$ have the same normal closure, then $R=R'$.\\

Let $f, g : \textbf{N} \to \textbf{N}$ be two functions.  We write $f\preceq g$ if there exists a nonnegative constant $d$ such that $f(n) < dg(dn) + dn$.  As in \cite{BRS}, all functions $g(n)$ that are considered in this paper grow at least as fast as $n$.  Thus for our purposes, $f(n)\preceq g(n)$ if $f(n) < dg(dn)$ for some positive constant $d$. Two functions $f, g$ are called {\em equivalent}, denoted $f\approx g$, if $f \preceq g$ and $g \preceq f$.\\

A function $f: \textbf{N} \to \textbf{N}$ is {\em superadditive} if for all natural numbers $m, n$ the inequality $f(m) + f(n) \leq f(m + n)$ holds.\\

In this paper we will work with objects (both group presentations and union machines) whose definitions depend on countably infinite sets of symbols.  We will require notions of being decidable and being c.e. for countable sets of symbols and for languages over such sets of symbols.  In order to define these notions, we adopt the following convention.  If $X$ is a countably infinite set of symbols, we require that the elements of $X$ to be of the form $x_{\textbf{y}}$, where the letter $x$ is an element of some finite alphabet and the index $\textbf{y}$ is a word over another finite alphabet.  From this perspective, both $X$ and any language $L$ over $X$ can be regarded as languages over a single finite alphabet.  Thus, if (\ref{pr3}) is a presentation with both $X$ and $R$ countably infinite, then the definitions of decidable sets and c.e. sets can be applied to both $X$ and $R$.  If a presentation (\ref{pr3}) has a finite or countably infinite generating set $X$, then we say that $P$ is decidable if both $X$ and $R$ are decidable.   Also, $P$ is c.e. if both $X$ and $R$ are c.e..  \\

For a group presentation $P=\langle X \| R\rangle$ where $X$ is finite or countably infinite, we say that the word problem is solvable for $P$ if the language of words in $F(X)$ that are trivial in $P$ is decidable.   Consider the language of pairs $(w,n)$ where $w$ is a trivial word in $F(X)$, $n$ is a positive integer written in unary, and there exists a $P$ diagram with boundary label $w$ and area not exceeding $n$.  If this language is decidable then we say that the {\em bounded word problem} for $P$ is solvable.  We say that the Dehn function $f$ of $P$ is computable if the language of pairs $(n,f(n))$, where $n$ and $f(n)$ are written in binary, is decidable.\\

We now formally define Turing machines and union machines.  In this paper all Turing machines are assumed to be nondeterministic.  A $k$-tape Turing machine has $k$ tapes and $k$ heads.  One can view it as a six-tuple

$$M=\langle A,\Gamma,Q,\Theta,\vec{s},\vec{h}\rangle,$$

\noindent where $A$ is the input alphabet and $\Gamma$ is the tape alphabet ($A\subseteq \Gamma$).  Each head has its own finite set of (disjoint) states, $Q_i$.  The set of states of the machine $M$ is $Q=Q_1\times...\times Q_k$.  An element of $Q$ is denoted $\vec{q}=(q_1,...,q_k)$ where $q_i\in Q_i$.  There is a state $\vec{s}\in Q$ called the start state, and a state $\vec{h}\in Q$ is called the accept state. \\

A {\em configuration} $\textbf{c}_i$ of the $i$th tape of $M$ is a word $\textbf{c}_i=\alpha_i \textbf{u}_i q_i \textbf{v}_i \omega_i$ where $q_i\in Q_i$ is the current state of the $i$th head, $\textbf{u}\in \Gamma^*$ is the word to the left of the head, and $\textbf{v}\in \Gamma^*$ is the word to the right of the head.  The letters $\alpha_i$ and $\omega_i$ are special letters of $\Gamma$ called the $i$th left and right end marker, respectively.  These letters may only appear at the left and right end of the $i$th tape; never anywhere else.  If the $i$th tape of a Turing machine is in configuration $\textbf{c}_i$, we say that $\textbf{u}_i\textbf{v}_i$ is the word written in the $i$th tape.  If $\textbf{u}_i,\textbf{v}_i=\varepsilon$ in $\textbf{c}_i$, we say that the $i$th tape is empty.\\

A {\em configuration} $\textbf{c}$ of $M$ is a $k$-tuple

$$\textbf{c}=(\textbf{c}_1,\textbf{c}_2,...,\textbf{c}_k),$$

\noindent where $\textbf{c}_i$ is a configuration of the $i$th tape.  The length $|\textbf{c}|$ of the configuration is the sum of the lengths of the words $\textbf{c}_i$.  The state of the configuration $\textbf{c}$ is the tuple $\vec{q}=(q_1,...,q_k)$, where $q_i$ is the $Q_i$ letter appearing in $\textbf{c}_i$.\\

If a tape letter $x$ is adjacent to a head letter $q_i$ in a configuration $\textbf{c}$, then we say that the head letter $q_i$ observes $x$ in $\textbf{c}$.\\

An {\em input configuration} is a configuration in which the word written in the first tape is in $A^*$, all other tapes are empty, the head of each tape observes its right marker $\omega_i$, and the state of the configuration is $\vec{s}$.  \\

The first tape of a Turing machine is called the {\em input tape}.  The input tape is a read only tape; that is, the word written on the input tape does not change during the course of a computation.  An {\em accept configuration} is any configuration for which the state is $\vec{h}$, the non-input tapes are empty, and the head of the first tape observes the right end letter $\omega_1$.\\

The commands of $M$ provide a way to pass from one configuration to another.  A command $\tau\in \Theta$ may be applied to a configuration $\textbf{c}$ of $M$, depending on the state of $\textbf{c}$ and the letters observed by the heads in $\textbf{c}$.  In a one-tape machine every command $\tau$ is of the following form:

$$uqv \to u'q'v',$$

\noindent where $u,v,u'v'$ are either letters of $\Gamma$ or the empty word $\varepsilon$.  The command $\tau$ can only be executed from a 1-tape configuration $\textbf{c}$ if $uqv$ is a subword of the single tape of \textbf{c}.  If the command $\tau$ is executed, then the machine replaces the subword $uqv$ with the subword $u'q'v'$.  In any such command $\tau$, $u= \alpha_1$ if and only if $u'=\alpha_1$.  Also $v=\omega_1$ if and only if $v'=\omega_1$.\\

Formally, a single-tape command is a six-tuple of symbols from the finite alphabets $Q$ and $\Gamma$, and  $\{\varepsilon\}$.  For example, the above command $\tau$ is the tuple $(u,q,v,u',q',v')$.\\

For a general $k$-tape machine, a command is a $k$-tuple of single-tape commands

$$\tau= (\tau_1, ..., \,\, \tau_k ), $$

\noindent where $\tau_i$ is $u_iq_iv_i \to u'_iq_i'v'_i$.  Formally then, such a command is a $6k$-tuple of symbols.  In order to execute the command $\tau$ from a $k$-tape configuration $\textbf{c}$, $u_iq_iv_i$ must be a subword of the configuration of the $i$th tape of $\textbf{c}$.  If this is the case, then the machine may execute the command $\tau$, replacing each $u_iq_iv_i$ by $u'_iq_i'v'_i$.  In any such command $\tau$, $u_i= \alpha_i$ if and only if $u'_i=\alpha_i$.  Also $v_i=\omega_i$ if and only if $v'_i=\omega_i$.\\

In later sections, we will want to represent each command of a Turing machine as a symbol.  The formal symbol for a command will be $\tau_{\vec{x}}$, where $\vec{x}$ is the corresponding $6k$-tuple.  The purpose of this notation is for a command symbol to contain a complete description of its corresponding command.  This will be quite important in the coming sections, as we will often have to effectively recover a command from its symbol. When referring to a command symbol informally, we will often omit the $\vec{x}$ index.\\

A {\em computation} of a $k$-tape Turing machine $M$ is a sequence of configurations $\textbf{c}^1,...,\textbf{c}^n$ such that for every $i=1,...,n-1$, the machine passes from $\textbf{c}^i$ to $\textbf{c}^{i+1}$ by applying one of the commands from $\Theta$.  A configuration $\textbf{c}$ is {\em acceptable} to $M$ if there exists at least one computation that starts with $\textbf{c}$ and ends with an accept configuration.  Such a computation is called an \textit{accepting} \textit{computation} for $\textbf{c}$.\\

An input word $\textbf{u}\in A^*$ is said to be {\em acceptable} if the input configuration for $\textbf{u}$ is an acceptable configuration.  The set of all acceptable input words over the alphabet $A$ is called the {\em language accepted by} $M$.\\

Let $C=(\textbf{c}^1,...,\textbf{c}^n)$ be a computation of a machine $M$ such that the configuraion $\textbf{c}^{j+1}$ is obtained from $\textbf{c}^j$ by the command $\tau_j\in \Theta$.  Then we call the word $\tau_1...\tau_{n-1}$ the {\em history} of the computation.  The number $(n-1)$ will be called the {\em time} or {\em length} of the computation.  The sum $\Sigma_{j=1}^n|\textbf{c}^j|$ will be called the {\em area} of $C$.\\

With every Turing machine we associate five functions: the {\em time function} $T(n)$, the {\em space function} $S(n)$, the {\em generalized time function} $T'(n)$, the {\em generalized space function} $S'(n)$, and the {\em area function} $A(n)$.  These functions will be called the {\em complexity functions} of the machine.  They are defined as follows.\\

We define $T(n)$ to be the minimal number such that every acceptable input configuration $\textbf{c}$ with $|\textbf{c}| \leq n$ is accepted by a computation of length at most $T(n)$.  The number $S(n)$ is the minimal number such that every acceptable input configuration $\textbf{c}$ with $|\textbf{c}|\leq n$ is accepted by a computation which contains only configurations of length $\leq S(n)$.  We define $T'(n)$ as the minimal number such that every acceptable configuration $\textbf{c}$ with $|\textbf{c}| \leq n$ is accepted by a computation of length at most $T'(n)$.  The number $S'(n)$ is the minimal number such that every acceptable configuration $\textbf{c}$ with $|\textbf{c}|\leq n$ is accepted by a computation which contains only configurations of length $\leq S'(n)$.  It is clear that $T(n)\leq T'(n)$ and $S(n)\leq S'(n)$ and it is easy to give examples where these inequalities are strict.  The area function $A(n)$ is defined as the minimal number such that for every acceptable configuration $\textbf{c}$ with $|\textbf{c}|\leq n$ there exists at least one accepting computation with area at most $A(n)$.\\

\begin{mydef}

Suppose $\{ M_i | i\in \textbf{N} \}$ is a countable set of Turing machines. Let $M_i=\langle A,\Gamma_i,Q_i,\Theta_i,\hat{s},\hat{h}\rangle$.   Note that each $M_i$ has identical $A$, $\vec{s}$, and $\vec{h}$.  We define $\bigcup_{i=1}^{\infty} M_i$ as follows:

$$\bigcup_{i=1}^{\infty} M_i = \langle A,\cup_{i=1}^{\infty} \Gamma_i, \cup_{i=1}^{\infty} Q_i, \cup_{i=1}^{\infty} \Theta_i,\vec{s},\vec{h}\rangle.$$

\end{mydef}

We call $\bigcup_{i=1}^{\infty} M_i$ a {\em union machine}.  All the terms and notation defined in this section concerning Turing machines have identical interpretations for union machines.  We will denote a union machine $\bigcup_{i=1}^{\infty} M_i$ as $M_{\infty}$.\\

A union machine $M_{\infty}$ is decidable if the sets $\bigcup_{i=1}^{\infty} \Gamma_i,\, \bigcup_{i=1}^{\infty} Q_i,\, \bigcup_{i=1}^{\infty} \Theta_i$ are decidable.  A union machine $M_{\infty}$ is c.e. if the sets $\bigcup_{i=1}^{\infty} \Gamma_i,\, \bigcup_{i=1}^{\infty} \, Q_i, \, \bigcup_{i=1}^{\infty} \Theta_i$ are c.e..\\

\section{Construction of Presentations Using Theorem \ref{main}}

It is trivial to show that the presentation $\langle a\| a\rangle$ is an example of case $\textbf{1}$.  In the following lemma we prove that examples of cases $\textbf{2}$ and $\textbf{5}$ do not exist.\\

\begin{lemma}\label{no2no5}

There are no finitely generated decidable presentations that satisfy case $\textbf{2}$ or case $\textbf{5}$.

\end{lemma}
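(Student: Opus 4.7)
The plan is to handle both impossibility statements by short direct algorithmic arguments; no additional machinery beyond the definitions is needed, and the two halves are essentially symmetric.

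For case $\textbf{5}$, I would derive solvability of the word problem from computability of the Dehn function $f$ together with solvability of the bounded word problem. Given a word $w$ over the finite generating set $X$, the decision procedure first computes $N = f(|w|+1)$ and then queries the bounded word problem procedure on the pair $(w, N)$. If the answer is \emph{yes}, then $w = 1$ in $P$ by definition of the bounded word problem. If the answer is \emph{no}, then $w \neq 1$: were $w$ trivial, the fact that $f$ is an isoperimetric function would supply a van Kampen diagram over $P$ with boundary label $w$ of area strictly less than $f(|w|+1) = N$, forcing the query to return \emph{yes}. This procedure decides the word problem, contradicting the hypothesis of case $\textbf{5}$.

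For case $\textbf{2}$, I would derive computability of the Dehn function from solvability of the word problem together with solvability of the bounded word problem. Because $X$ is finite, the set of reduced words in $F(X)$ of length at most $n$ is finite and effectively enumerable. Using the word problem decision procedure, select those that are trivial in $P$. For each such trivial $w$, run the bounded word problem procedure on $(w, 0), (w, 1), (w, 2), \ldots$ in turn; the procedure halts with a \emph{yes} at some first value $k_w$, because $w$, being trivial, admits at least one diagram. Then the minimum isoperimetric bound at $n$ can be read off as a function of the $k_w$ for $|w| \leq n$, so the Dehn function is computable, contradicting the hypothesis of case $\textbf{2}$.

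Both arguments are straightforward. The only point worth flagging is that in case $\textbf{5}$ one must explicitly invoke the property of $f$ as an isoperimetric function so that a \emph{no} answer from the bounded word problem oracle on a sufficiently generous area budget genuinely certifies non-triviality; once this is noted, neither direction presents any technical obstacle. Informally, the bounded word problem is the common ``hard'' ingredient in both the word problem and the Dehn function, and combining it with either of the remaining pieces of data recovers the third.
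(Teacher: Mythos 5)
Your proof is correct and follows essentially the same approach as the paper: compute the Dehn function value and query the bounded word problem once for case \textbf{5}, and enumerate trivial words and compute minimal areas via the bounded word problem for case \textbf{2}. The only difference is that you are slightly more careful with the off-by-one in case \textbf{5}, using $f(|w|+1)$ rather than $f(|w|)$ so as to match the strict inequality $|w|<n$ in the paper's definition of an isoperimetric function; this is a harmless refinement of the same argument.
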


\begin{proof}

For case \textbf{2}, we observe that if a finitely generated decidable presentation $P$ has solvable word problem and solvable bounded word problem, then we can compute the value of the Dehn function of $P$ on input $n>0$ as follows.  We first use the solvability of the word problem to effectively find all of the finitely many trivial words in $P$ of length $\leq n$.  Then, for each such trivial word $w$, we use the solvability of the bounded word problem to find the area of the minimal area $P$ diagram with boundary label $w$.  The largest such area will be the value of the Dehn function of $P$ on input $n$.  Therefore no finitely generated decidable presentation satisfies case $\textbf{2}$.\\

As for case \textbf{5}, if a finitely generated decidable presentation $P$ has solvable bounded word problem and computable Dehn function $f$, then the word problem for $P$ can be solved as follows.  To determine whether a word $w$ is trivial in $P$, first compute $f(|w|)$.  Then solve the bounded word problem for the input $(w,f(|w|))$.  If this input is accepted, then $w$ is trivial in $P$.  Otherwise, by the definition of the Dehn function, $w$ is not trivial in $P$.\\

\end{proof}

We will use Theorem \ref{main} to create examples of finitely generated minimal decidable group presentations satisfying the cases $\textbf{3},\textbf{4},\textbf{6},\textbf{7}$, and $\textbf{8}$.  Let $\textbf{B}\subseteq \{0,1\}^*$ be the set of binary representations of natural numbers.  In this section, when we want to indicate a particular element $x\in\textbf{B}$, we will often simply refer to the natural number for which $x$ is the binary representation.  For example, if we refer to the set of elements in $\textbf{B}$ that are $<1000$, we mean to indicate the set elements in $\textbf{B}$ that are binary representations of natural numbers $<1000$.\\

Let $\textbf{K}\subset \textbf{B}$ be a language over alphabet $\{0,1\}^*$ that is c.e. but not decidable.  It is possible to program a union machine $M_{\infty} ^{\textbf{K}}$ that can query the membership problem of $\textbf{K}$.  Such a union machine behaves similarly to an oracle Turing machine \cite{Soare}.\\

The machine $M_{\infty} ^{\textbf{K}}$ will query the membership problem of $\textbf{K}$ using its $k$th (and final) tape.  The $k$th set of state letters of $M_{\infty} ^{\textbf{K}}$ is $Q_k=\{q_k^{\textbf{y}}|\textbf{y}\in \{0,1\}^*\}$.  The other sets of state letters $Q_1,...,Q_{k-1}$ are required to be finite.  The machine never writes any letters in the $k$th tape.  Instead, it will use the upper index of the $k$th state letter as its $k$th tape.  We explain this formally below.\\

The commands of $M_{\infty} ^{\textbf{K}}$ come in two types: query commands and non-query commands.  We require that the $k$th component of a non-query command has one of the three following forms:
\begin{itemize}

\item
$\alpha_kq_k^{\textbf{y}}\omega_k\to \alpha_kq_k^{\textbf{y}a}\omega_k$,\\

\item
$\alpha_kq_k^{\textbf{y}a}\omega_k\to \alpha_k q_k^{\textbf{y}}\omega_k$,\\

\item
$\alpha_kq_k^{\textbf{y}}\omega_k\to \alpha_k q_k^{\textbf{y}}\omega_k$,\\

\end{itemize}

\noindent where $\textbf{y}\in \{0,1\}^*$ and $a\in \{0,1\}$.  We also require that the set of non-query commands be decidable.  Note that this is equivalent to requiring that the algorithm consisting of the non-query commands of $M_{\infty}^{\textbf{K}}$ could be performed by a standard non-deterministic Turing machine.  Thus, when we provide specific examples of such union machines, it will be sufficient to informally describe the algorithm run by the non-query commands of $M_{\infty}^{\textbf{K}}$.\\

The query commands of $M_{\infty} ^{\textbf{K}}$ are used to ask whether the upper index of the $k$th state letter of a given configuration is in $\textbf{K}$.  For each $\textbf{y}\in \textbf{K}$, there is a query command of the form

$$\tau_\textbf{y}= (q_1\to q'_1,...,q_{k-1}\to q'_{k-1}, q_k^\textbf{y} \to q_k^\textbf{y}).$$\\

The state $(q'_1,...,q'_{k-1},q_k^\textbf{y})$ is a ``yes" state, indicating that the element $\textbf{y}$ is a member of $\textbf{K}$.  If $\textbf{y}\notin \textbf{K}$, then no query command $\tau_\textbf{y}$ exists in $M_{\infty} ^{\textbf{K}}$.  In this way the machine $M_{\infty} ^{\textbf{K}}$ is different from an oracle Turing machine: while oracle machines may receive negative answers to queries, there is no way for $M_{\infty} ^{\textbf{K}}$ to receive a negative answer to a query. \\

The machine $M_{\infty} ^{\textbf{K}}$ is c.e..  This is because the input and work alphabets of $M_{\infty}^{\textbf{K}}$ are finite, the set of states of $M_{\infty} ^{\textbf{K}}$ is decidable, the set of non-query commands of $M_{\infty} ^{\textbf{K}}$ is decidable, and (since $\textbf{K}$ is c.e.) the set of query commands is c.e..\\

We can now use Theorem \ref{main} to construct group presentations with desired properties from union machines of the form $M_{\infty}^{\textbf{K}}$.  We will first construct a presentation to satisfy case \textbf{7}.  For the following construction, we assume that $\textbf{K}$ contains every even natural number.\\

We begin by describing a union machine $M_{\infty}^{\textbf{K}}$ with input alphabet $A$.  If $M_{\infty}^{\textbf{K}}$ is given $\textbf{u}\in A^*$ as an input word then $M_{\infty}^{\textbf{K}}$ writes the binary representation of $|\textbf{u}|$ in the upper index of the $k$th state letter.  If $|\textbf{u}|\in \textbf{K}$, there will be a query command in $M_{\infty}^{\textbf{K}}$ that can then be executed.  If this query command is executed then $M_{\infty}^{\textbf{K}}$ calculates $|\textbf{u}|^2$, runs for $|\textbf{u}|^2$ additional steps and accepts.    If there is no query command that can be applied (i.e. if $|\textbf{u}|$ is not in $\textbf{K}$), then there is no way for the machine to reach the accept state.  Thus the language accepted by $M_{\infty}^{\textbf{K}}$ is the set of input words whose lengths are in $\textbf{K}$.\\

Note that the length of an accepting computation of $M_{\infty}^{\textbf{K}}$ that begins with an input configuration for $\textbf{u}\in A^*$ is $O(|\textbf{u}|^2)$. Therefore the time function $T$ of $M_{\infty}^{\textbf{K}}$ is equivalent to $x^2$ and, by Theorem \ref{main}, the Dehn function $f$ of $P'_1(M_{\infty}^{\textbf{K}})$ is equivalent to $x^8$.  Since $\textbf{K}$ is undecidable, the language $L$ accepted by $M_{\infty}^{\textbf{K}}$ is not decidable. Therefore, by Theorem \ref{main} part 1, the word problem for $P'_1(M_{\infty}^{\textbf{K}})$ is not solvable.\\

Theorem \ref{main} tells us the Dehn function $f$ of $P'_1(M^\textbf{K}_{\infty})$ up to equivalence, but this is not sufficient to conclude that $f$ is computable.  For that, we need to prove the following lemma.  Consider the Baulmslag-Solitar presentation $H=\langle s,t\| sts^{-2}t^{-1}\rangle$, where $s,t$ are not among the generators of $P'_1(M^\textbf{K}_{\infty})$.

\begin{mydef}

Let $P$ be an arbitrary presentation.  For a word $w$ trivial in $P$, we define $L_P(w)$ to be the area of the minimal area $P$ diagram for $w$.

\end{mydef}

\begin{lemma}\label{forty}

Suppose $H$ is the Baulmslag-Solitar presentation defined above and $P$ is a finitely generated presentation whose Dehn function is equivalent to a superadditive polynomial function.  If $J$ is the direct product $H \times P$, then the Dehn function of $J$ is computable.

\end{lemma}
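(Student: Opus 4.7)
The plan is to prove computability of $f_J$ by combining two ingredients: decidability of the bounded word problem for $J$, and an effectively computable upper bound $U$ on $f_J$. Once these are in hand, $f_J(n)$ can be computed as follows: enumerate all words $w$ in the generators of $J$ with $|w|\leq n$; for each, use the bounded word problem with area parameter $U(n)$ to decide whether $w$ admits a $J$-diagram of area at most $U(n)$, which by the upper bound is equivalent to $w$ being trivial in $J$; for each trivial $w$, binary search within $\{1,\dots,U(n)\}$ determines its minimum area; then $f_J(n)$ is the maximum of these minimum areas over trivial $w$ of length at most $n$.

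For decidability of the bounded word problem in $J$: the bounded word problem for $H=\mathrm{BS}(1,2)$ is decidable since $H$ is finitely presented with solvable word problem. The bounded word problem for $P$ is decidable because the relator set of $P$ is decidable and the polynomial bound on $f_P$ ensures that van Kampen diagrams over $P$ of any bounded area form an effectively enumerable finite set. Using the commutation relations of the direct product to rewrite any word in $J$ as a concatenation $w_H w_P$ at quadratic cost reduces the bounded word problem in $J$ to the bounded word problems in $H$ and $P$.

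For the computable upper bound on $f_J$: the Dehn function of $\mathrm{BS}(1,2)$ admits an explicit computable exponential upper bound $C_0\cdot 2^{C_0 n}$ with effective $C_0$, and the standard direct-product inequality gives $f_J(n)\leq f_H(n)+f_P(n)+O(n^2)$. Since $f_P$ is equivalent to a polynomial of known form, the polynomial contribution from $P$ is asymptotically dominated by the exponential contribution from $H$, so a uniform computable exponential upper bound $U(n)=K\cdot 2^{Kn}$ with effective $K$ suffices.

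The main obstacle is the computability of $U$: the hypothesis $f_P\approx p$ provides only the existence of a constant $D$ with $f_P(n)\leq D\,p(Dn)+Dn$, so a direct bound on the $f_P$ contribution is not available from the hypothesis alone. The strategy to overcome this is to exploit asymptotic dominance of exponentials over polynomials: for a fixed polynomial $p$, the function $D\,p(Dn)+Dn$ is eventually dominated by $2^{Kn}$ for any $K$ larger than a threshold depending only on the degree of $p$, so by choosing the exponent in $U$ large enough the unknown constant $D$ can be absorbed uniformly, with the finitely many potentially troublesome small values of $n$ handled by direct verification since those values of $f_J$ can be read off after a single sufficiently large computable bound is established.
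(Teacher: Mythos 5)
Your approach is structurally different from the paper's and, unfortunately, it has a fatal gap in its very first ingredient: the claimed decidability of the bounded word problem for $P$ (and hence for $J$). You assert that a decidable relator set together with a polynomial Dehn function forces the set of $P$-diagrams of bounded area to be a finite, effectively enumerable collection. This is false when the relators have unbounded length, which is exactly the situation here: $P$ is taken to be $P_1'(M_\infty^{\mathbf{K}})$, whose relators are $h$-images with arbitrarily long blocks of $b$'s. A diagram of small area and small boundary can in principle involve arbitrarily long relators glued along long internal paths, so there is no finite list to enumerate. In fact the paper \emph{proves} your claim is false: in the construction for case~\textbf{3}, the presentation $P_1'(M_\infty^{\mathbf{K}})$ has Dehn function $\approx x^{40}$, decidable relator set, yet provably unsolvable bounded word problem (solving it would decide $\mathbf{K}$). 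Lemma~\ref{forty} is then invoked on precisely this $P$, so any proof of the lemma that presupposes a solvable bounded word problem for $J$ cannot cover the case the lemma is designed for.

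Your secondary discussion about absorbing the unknown constant $D$ is also muddled: to establish mere computability it suffices that \emph{some} Turing machine computes $f_J$, and the true constant $D$ can simply be hard-coded into it; there is nothing to ``absorb uniformly,'' and the proposed ``direct verification of small $n$'' is circular since it presumes the very bound you are trying to establish. But this is not the real obstacle. The real obstacle is the bounded word problem, and the paper's proof is structured specifically to avoid it. The paper instead shows that only words $w$ with $|w|_P < d$ (for a fixed $d$ depending on how fast $f_H \approx 2^x$ dominates $f_P$) can realize the Dehn function of $J$: for $|w|_P \geq d$ a pure $H$-word of the same length already has larger area. Since $P$ is finitely generated, this isolates a finite set $R_1$ of $P$-relators that can ever appear in a minimal $J$-diagram realizing $f_J$, so $f_J$ coincides with the Dehn function of the \emph{finite} presentation $H\times\tilde{P}$. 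Finite presentations with computably bounded Dehn functions have solvable word problem, hence computable Dehn function — no appeal to the bounded word problem of $J$ itself is ever made. You would need to recover some version of this finiteness reduction to make your proof go through.
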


\begin{proof}

Let $f_J$ denote the Dehn function of $J$, $f_P$ denote the Dehn function of $P$, and $f_H$ denote the Dehn function of $H$.  It is well known that the Dehn function $f_H$ of the Baulmslag-Solitair presentation $H$ is equivalent to the exponential function $2^x$.\\

Since $f_H$ is equivalent to $2^x$, there is a constant $b$ such that for all $x\in\textbf{N}$, $2^{x}\leq bf_H(bx)$.  Therefore $\frac{1}{b}2^{\lfloor\frac{x}{b}\rfloor} \leq f_H(x)$ for all $x\in\textbf{N}$.  It follows that for all $n,m\in \textbf{N}$,

$$f_H(m+n) - f_H(n) \geq \frac{1}{b}(2^{\lfloor \frac{m+n}{b} \rfloor}-2^{\lfloor \frac{n}{b}\rfloor}) \geq \frac{1}{b}(2^{\lfloor\frac{m}{b}\rfloor+\lfloor\frac{n}{b}\rfloor}-2^{\lfloor\frac{n}{b}\rfloor}) =\frac{1}{b}2^{\lfloor\frac{n}{b}\rfloor}(2^{\lfloor\frac{m}{b}\rfloor}-1).$$

Therefore, if $m$ is sufficiently large (say $m>d$), then $f_H(n+m) - f_H(n) \geq (n+m)^2 + f_P(m)$ for all $n\in\textbf{N}$. \\

Let $w$ be an arbitrary reduced trivial word in $J$.  Let $|w|_H$ denote the number of $H$ letters in $w$, and $|w|_{P}$ denote the number of $P$ letters in $w$.  Then there is a diagram for $w$ with area at most $(|w|_P +|w|_H)^2+f_P(|w|_P)+f_h(|w|_H)$, where the $(|w|_P +|w|_H)^2$ term represents the number of relators required to transform $w$ into $w_1w_2$, where $w_1$ is a word in the generators of $H$ and $w_2$ is a word in the generators of $P$.  By the above inequalities, if $|w|_P\geq d$, then $f_H(|w|_P +|w|_H)\geq (|w|_P +|w|_H)^2+f_P(|w|_P)+f_h(|w|_H)$.\\

Thus if $w$ is trivial in $J$ and $|w|_P\geq d$, then $L_J(w)\leq f_H(|w|)$.  This means that for any word $w$ trivial in $J$ with $|w|_P\geq d$, there exists a word $w'$ in $H$ that is trivial in $J$ with $|w'|=|w|$ and $L_J(w')\geq L_J(w)$.  Informally, this means that if $w$ is trivial in $J$ and $|w|_P\geq d$, then $w$ need not be considered when computing $f_J$.\\

Therefore, if $w$ is a trivial word in $J$ with $L_J(w) = f_J(|w|)$, then $|w|_{P}<d$.  Since $P$ is finitely generated, the set $\Omega$ of trivial words in $P$ of length $\leq d$ is finite.  We collect the finite set of minimal area $P$ diagrams with boundary labels in $\Omega$, and then consider the finite set $R_1$ of $P$ relators that label 2-cells in these diagrams.  The label of a 2-cell that appears in a minimal area $J$ diagram for a word $w$ with $|w|_{P}<d$ is either the Baumslag-solitar relator $sts^{-2}t^{-1}$, a commuting relator $zyz^{-1}y^{-1}$ where $z\in H$ and $y\in J$, or an element of $R_1$.  Let $\tilde{P}$ be a presentation with the same generating set as $P$ and with relator set $R_1$.  If $w$ is a trivial word in $J$ with $L_J(w) = f_J(|w|)$ then the minimal area $J$ diagram for $w$ is identical to the minimal area $H\times \tilde{P}$ diagram for $w$.  Therefore $f_J$ is equal to the Dehn function of $H\times \tilde{P}$.  Since $H\times \tilde{P}$ is a finite presentation, we conclude that $f_J$ is computable.\\

\end{proof}

By Lemma \ref{forty}, the Dehn function of $H\times P'_1(M_{\infty}^{\textbf{K}})$ is computable.  Since the word problem for $H \times P(M_{\infty}^{\textbf{K}})$ is not solvable, and since no finitely generated decidable group presentation exists satisfying case $\textbf{5}$ (by Lemma \ref{no2no5}), the bounded word problem for $H \times P(M_{\infty}^{\textbf{K}})$ is unsolvable.  By Theorem \ref{main}, the fact that $H$ only contains one relator, and the obvious fact that a direct product of two minimal presentations is a minimal presentation, $H\times P'_1(M_{\infty}^{\textbf{K}})$ is minimal.\\

Next we consider case \textbf{3}.  For the following construction, we will assume that $\textbf{K}$, in addition to being c.e. and undecidable, contains no even numbers. \\

We create a union machine $M_{\infty}^{\textbf{K}}$ such that when $M_{\infty}^\textbf{K}$ is given an input word $\textbf{u}$, the machine $M^\textbf{K}_{\infty}$ writes the binary representation of $|\textbf{u}|$ in the upper index of the $k$th state letter and attempts to execute a query command.  If a query command is executed (which can only happen if $|\textbf{u}|\in \textbf{K}$) then $M^\textbf{K}_{\infty}$ accepts immediately.  Otherwise, $M^\textbf{K}_{\infty}$ calculates $|\textbf{u}|^{10}$, runs for $|\textbf{u}|^{10}$ additional steps, and then accepts.  Note that the time function of $M_{\infty}^\textbf{K}$ is equivalent to $x^{10}$, which is superadditive.\\

By Theorem \ref{main} part 2, since $M^{\textbf{K}}_{\infty}$ accepts every input, the word problem for $P'_1(M^\textbf{K}_{\infty})$ is solvable.  If the bounded word problem for $P'_1(M^\textbf{K}_{\infty})$ were solvable then it would be possible to decide $\textbf{K}$ as follows.  To decide if $n\in\textbf{K}$, pick an input word $\textbf{u}$ with $|\textbf{u}|=n$.  Consider the word $h(\mathcal{K}(\textbf{u}))$.  Since every input word is accepted by $M^\textbf{K}_{\infty}$, the word $h(\mathcal{K}(\textbf{u}))$ is trivial in $P'_1(M^\textbf{K})$.  If we begin solving the bounded word problem on inputs $(h(\mathcal{K}(\textbf{u})),1),(h(\mathcal{K}(\textbf{u})),2),...$, then we can find the area of the minimal area diagram with boundary label $h(\mathcal{K}(\textbf{u}))$.  If $n\in \textbf{K}$, then this area is $O((|\textbf{u}|^{10})^4)$ by Theorem \ref{main} part 4.  If $n\notin \textbf{K}$, then by Theorem \ref{main} part 4, this area will be $O(|\textbf{u}|^4)$.  Since $\textbf{K}$ is undecidable, we conclude that the bounded word problem is not solvable for $P'_1(M_{\infty}^{\textbf{K}})$.\\

Now we consider the presentation $H\times P'_1(M_{\infty}^{\textbf{K}})$, which (by Lemma \ref{forty}) will have computable Dehn function.   Since the bounded word problem is unsolvable for $P'_1(M_{\infty}^{\textbf{K}})$, the bounded word problem is also unsolvable for $H \times P'_1(M_{\infty}^{\textbf{K}})$.  Since the word problem is solvable for $H$ and $P'_1(M_{\infty}^{\textbf{K}})$, the word problem is also solvable for $H \times P'_1(M_{\infty}^{\textbf{K}})$.  By Theorem 3, the fact that $H$ only contains one relator, and the fact that a direct product of two minimal presentations is a minimal presentation, $H \times P'_1(M_{\infty}^{\textbf{K}})$  is minimal.\\

To provide an example for case $\textbf{4}$ we must first construct a function $f$ such that $f^4$ is superadditive and $f^4$ is not equivalent to any computable function.  To construct such an $f$, we first partition $\textbf{N}$ into disjoint subsets $I_n$, where $I_n$ is the set of integers contained in the interval $(10^{(n-1)^2},10^{n^2}]$.  The function $f$ will be constructed such that the values that $f$ takes on $I_n$ will depend on whether or not $n\in K$.\\

We now define the function $f$.  For $x\in I_n$, if $n\in\textbf{K}$ then $f(x)=10^{n^3}x^2$.  If $n \notin \textbf{K}$ then $f(x)=10^{n^3+2n^2}x^2$.  We claim that $f$ is superadditive.  To prove this claim, suppose that $x_1,x_2,x_3\in \textbf{N}$ and $x_1+x_2=x_3$.  Suppose also that $x_3\in I_n$.  Note that $(n-1)^3+2(n-1)^2< n^3$, which implies that for all $x<x_3$, $f(x)\leq 10^{n^3}x^2$.  Therefore,

$$f(x_1)+f(x_2) \leq 10^{n^3}x_1^2 + 10^{n^3}x_2^2 \leq 10^{n^3}(x_1+x_2)^2 = 10^{n^3}(x_3)^2 \leq f(x_3).$$

We now prove that $f^4$ is not equivalent to any computable function.  For any $x\in I_n$, if $n\in \textbf{K}$, then $f(x)\leq 10^{n^3}(10^{n^2})^2=10^{n^3+2n^2}$.  Alternately, if $n\notin \textbf{K}$ then $f(x)> 10^{n^3+2n^2}(10^{(n-1)^2})^2=10^{n^3+4n^2-4n+2}$. Suppose toward a contradiction that $f^4$ is equivalent to a computable function $f_c$.  Then there exists a constant $b_c$ such that for all $x\in \textbf{N}$,  $f_c(x)\leq b_cf^4(b_cx)$ and $f^4(x)\leq b_cf_c(b_cx)$.  Define $x_n:=\lfloor (10^{n^2})/({b_c^2})\rfloor$.  Note that for all sufficiently large $n$, $x_n\in I_n$.  Therefore, for sufficiently large $n$, the integers $x_n, b_cx_n, b_c^2 x_n$ are all contained in $I_n$.  Suppose that $n$ is sufficiently large that $x_n,b_cx_n, b_c^2 x_n\in I_n$.  If $n\notin\textbf{K}$, then $(10^{n^3+4n^2-4n+2})^4< f^4(x_n)\leq b_cf_c(b_cx_n)$.  Alternately, if $n\in\textbf{K}$, then $f_c(b_cx_n)\leq b_cf^4(b_c^2x_n)\leq b_c(10^{n^3+2n^2})^4$.\\

Thus if $n\notin\textbf{K}$, then $\frac{1}{b_c}(10^{n^3+4n^2-4n+2})^4\leq f_c(b_cx_n)$.  If $n\in\textbf{K}$, then $f_c(b_cx_n)\leq b_c(10^{n^3+2n^2})^4$. We note that if $n$ is sufficiently large then $\frac{1}{b_c}(10^{n^3+4n^2-4n+2})^4>b_c(10^{n^3+2n^2})^4$.  Thus for sufficiently large $n$, we can effectively decide whether $n\in \textbf{K}$ by evaluating $f_c(b_cx_n)$.  We conclude that $f^4$ is not equivalent to any computable function.\\

We now construct a union machine $M_{\infty}^{\textbf{K}}$ with time function equivalent to $f$ as follows.  The machine $M_{\infty}^{\textbf{K}}$ accepts every input.  When given input $\textbf{u}$, the machine $M_{\infty}^{\textbf{K}}$ writes $|\textbf{u}|$ in the upper index of the $k$th state letter and attempts to execute a query command.  If a query command is executed (which can only happen if $|\textbf{u}|\in \textbf{K}$) then $M_{\infty}^{\textbf{K}}$ calculates $10^{n^3}|\textbf{u}|^2$, runs for an additional $10^{n^3}|\textbf{u}|^2$ steps, and then accepts.  Otherwise, $M_{\infty}^{\textbf{K}}$ calculates $10^{n^3+2n^2}|\textbf{u}|^2$, runs for an additional $10^{n^3+2n^2}|\textbf{u}|^2$ steps, and then accepts.  Note that the time function $T$ of $M_{\infty}^{\textbf{K}}$ is equivalent to the function $f$ defined above.  Therefore $T^4$ is equivalent to $f^4$.\\

By Theorem \ref{main} part 2, since $M^{\textbf{K}}_{\infty}$ accepts every input, the word problem for $P'_1(M^\textbf{K}_{\infty})$ is solvable.  By Theorem \ref{main} part 3, the Dehn function of $P'_1(M^\textbf{K}_{\infty})$ is equivalent to $T^4$, and is therefore not computable.  If the bounded word problem for $P'_1(M^\textbf{K}_{\infty})$ were solvable then $P'_1(M^\textbf{K}_{\infty})$ would satisfy case \textbf{2}, which is impossible by Lemma \ref{no2no5}.  By Theorem \ref{main}, $P'_1(M^\textbf{K}_{\infty})$ is minimal.\\

To provide an example of case \textbf{6}, we can simply use the machinery from \cite{BRS}.  We let $M$ be a Turing machine that accepts the language $\textbf{K}$ (since $\textbf{K}$ is undecidable, the time function of $M$ is not bounded by any computable function).  For an input word $\textbf{u}$ of $M$, the word $\mathcal{K}(\textbf{u})$ is trivial in $P(M)$ if and only if $\textbf{u}\in \textbf{K}$.  Since $\textbf{K}$ is undecidable, the word problem for $P(M)$ is not solvable.  Since $P(M)$ is finite, it follows that the Dehn function of $P(M)$ is not computable.  Additionally, the bounded word problem is solvable for $P(M)$ because $P(M)$ is finite.  By Theorem \ref{smalldeal}, $P(M)$ is minimal.\\

For case \textbf{8}, if $P_1$ and $P_2$ are finitely generated decidable minimal presentations satisfying cases \textbf{6} and \textbf{7} respectively, then $P=P_1\times P_2$ satisfies case \textbf{8}.  Since the word problem for $P_1$ is not solvable and $P_1$ is a finite presentation, the Dehn function for $P_1$ is not bounded above by any computable function.  Therefore $P$ does not have computable Dehn function.  Since the word problem and bounded word problem are not solvable for $P_2$, they are also not solvable for $P$.  By Theorems \ref{smalldeal} and \ref{main} and the fact that the presentation of a direct product of two minimal presentations is a minimal presentation, $P=P_1\times P_2$ is minimal.\\

\section{Symmetrization of $M_{\infty}$}

The purpose of this section is to prove Lemma \ref{symsol}.  The proof of this lemma will rely on the construction in \cite{BRS} of a symmetric Turing machine $M'$ that simulates the Truing machine $M$.\\

For a Turing machine command $\tau$ of the form

$$(u_1q_1v_1\to u'_1q_1'v_1',...,u_kq_kv_k\to u'_kq_k'v'_k),$$\\

\noindent we write $\tau^{-1}$ to indicate the tuple

$$(u'_1q'_1v'_1\to u_1q_1v_1,...,u'_kq'_kv'_k\to u_kq_kv_k).$$\\

Note that $\tau^{-1}$ has the form of a command of a Turing machine.  These two commands are called $mutually$ $inverse$.  We say that a Turing machine is $symmetric$ if every $\tau \in \Theta$ has an inverse command $\tau^{-1} \in\Theta$.  The definition of a symmetric union machine is identical.  \\

We disallow Turing machine/union machine commands $\tau$ for which $\tau=\tau^{-1}$.  This can be done without loss of generality because when such a command $\tau$ is applied to a configuration $\textbf{c}$, the resulting configuration is $\textbf{c}$.  Thus in a symmetric machine the set of commands can be partitioned into {\em positive} and {\em negative} commands such that if $\tau$ is positive, then $\tau^{-1}$ is negative.\\

We will require a detailed description of how the symmetric Turing machine $M'$ is constructed from the standard Turing machine $M$ in \cite{BRS}.  An identical construction produces a symmetric union machine $M'_{\infty}$ from a standard union machine $M_{\infty}$.  Let $M_{\infty}$ be a $k$-tape union machine.  Recall that by our definitions of Turing machines and union machines, the first tape of $M_{\infty}$ is the {\em input} tape, which can only contain letters from the input alphabet.  Also, in an {\em input configuration} of $M_{\infty}$, an input word is written on the first tape, all other tapes are empty, and the head observes the right end marker of each tape.\\

The machine $M'_{\infty}$ has $k+1$ tapes.  As in $M_{\infty}$, the letters that can appear in the first tape are elements of $A$, while the letters that appear in tapes 2 through $k$ are letters of $\Gamma$.  The letters that are used in the $(k+1)$st tape of $M'_{\infty}$ are command symbols of $M_{\infty}$.  The input alphabet of $M'_{\infty}$ is identical to that of $M_{\infty}$.  We will define $M'_{\infty}$ by first describing the set of positive commands of $M'_{\infty}$.  The description of $M'_{\infty}$ will then be completed by including the inverses of the positive commands.\\

The $(k+1)$st set of state letters $Q_{k+1}$ contains three elements, $q(1),q(2),q(3)$.  The machine $M'_{\infty}$ is composed of three subroutines which we will call \textit{phases 1, 2, and 3}.  We call a configuration $\textbf{c}$ of $M'_{\infty}$ a {\em phase 1 configuration} if the $(k+1)$st state letter of $\textbf{c}$ is $q(1)$.  Similarly, phase 2 and phase 3 configurations have $(k+1)$st state letters $q(2)$ and $q(3)$, respectively.  Positive phase 1 commands can only be applied to phase 1 configurations, positive phase 2 commands can only be applied to phase 2 configurations, and positive phase 3 commands can only be applied to phase 3 configurations.\\

Input configurations of $M'_{\infty}$ are phase 1 configurations.  In a phase 1 configuration the $(k+1)$st head is in state $q(1)$, and the 1st through $k$th heads are in the start state of $M_{\infty}$.  For each command letter $\tau$ of $M_{\infty}$, there is a positive phase 1 command of $M'_{\infty}$ that writes the letter $\tau$ in the $(k+1)$st tape to the left of the $(k+1)$st head.  These commands do not change the state of $M'_{\infty}$.\\

After performing a phase 1 computation, the machine $M'_{\infty}$ will have a sequence of command symbols of $M_{\infty}$ written on the $(k+1)$st tape to the left of the $(k+1)$st head.  In order to proceed to the second phase, the machine checks if all tapes except tapes 1 and $(k+1)$ are empty and then changes the $(k+1)$st state to $q(2)$.  This is done by a single command of the form:

\begin{equation} \label{phase1-2}
(q_1\omega_1 \to q'_1 \omega_1,..., \alpha_i q_i\omega_i \to \alpha_i q'_i \omega_i,..., q(1)\omega_{k+1} \to q(2) \omega_{k+1})
\end{equation}\\

Note that in this command the $(k+1)$st state letter changes from $q(1)$ to $q(2)$.  In the second phase $M'_{\infty}$ attempts to use the first $k$ tapes to execute the sequence of commands written on tape $(k+1)$.  For every positive command $\tau$ of $M_{\infty}$ of the form

$$(u_1q_1v_1\to u'_1q_1'v_1',...,u_kq_kv_k\to u'_kq_k'v'_k),$$\\

\noindent we include the following positive command $\tau'$ in $M'_{\infty}$:

$$(u_1q_1v_1\to u'_1q_1'v_1',...,u_kq_kv_k\to u'_kq_k'v'_k, ~~\tau q(2)\to q(2)\tau).$$\\

The command $\tau'$ first checks if the command symbol $\tau$ is written to the left of the $(k+1)$st head (if $\tau$ is not written there, then the command $\tau'$ cannot be executed).  Then $\tau'$ executes the command $\tau$ on the first $k$ tapes of $M'_{\infty}$, and moves the $(k+1)$st head one letter to the left.\\

Suppose the $(k+1)$st head succeeds in moving all the way to the left end marker of the $(k+1)$st tape during phase 2.  Then the machine may pass to phase 3 provided tapes 1 through $k$ form an accept configuration of $M_{\infty}$ (recall that all tapes except the input tape are empty in an accept configuration of $M_{\infty}$).   In this case $M'_{\infty}$ may pass to phase 3 via the below command (in which $\vec{h}=(h_1,...,h_k)$ is the accept state of $M_{\infty}$).

$$(h_1\omega_1 \to h_1\omega_1, \alpha_2 h_2 \omega_2 \to \alpha_2 h_2 \omega_2,...,\alpha_k h_k \omega_k \to \alpha_k h_k \omega_k, \alpha_{k+1} q(2) \to \alpha_{k+1} q(3)).$$\\

In the third phase the machine erases tapes 1 and $(k+1)$ and enters the accept state of $M'_{\infty}$ once this erasing is complete.  This accept state is  $(h'_1,...,h'_k,q(3))$, where each $h'_i$ is a new state letter that is not a state letter of $M_{\infty}$.  Note that in phase 3 of $M'_{\infty}$ we make an exception to the rule that the input tape is read only.  We now include the inverses of all commands described above.\\

To complete the construction of $M'_{\infty}$, for each $i=1,...,k$, the $i$th tape of $M$ is divided into two tapes. These new tapes are numbered $2i-1$ and $(2i)$.  The new tapes $2i-1$ and $(2i)$ simulate the portions of the old $i$th tape that lay to the left and right of the head, respectively.\\

If a configuration of the old tape $i$ was $\alpha_1 \textbf{u}q_i\textbf{v}\omega_i$ then the corresponding configurations of the new tapes $2i-1$ and $2i$ will be, respectively:

$$\alpha_i\textbf{u}q_i \omega_i \,\,\,\,\,\,\, \text{and} \,\,\,\,\,\,\,\, \alpha_{(i+1/2)} \bar{\textbf{v}} q_{(i+1/2)} \omega_{(i+1/2)},$$\\

\noindent where $\bar{\textbf{v}}$ is the word $\textbf{v}$ rewritten from right to left.\\

The set of commands is then adjusted so that each old command is replaced by $2k$ new commands that execute the old command one tape at a time.  These adjustments (formally described on page 399 of \cite{BRS}) do not affect the language accepted by the machine.  The complexity functions are changed by only a constant factor (since each old command has been turned into $2(k+1)$ new commands).  As a result of these adjustments, every command of $M'_{\infty}$ is of one of the two following forms. \\

\begin{equation} \label{type1}
(q_1\omega_1 \to q'_1 \omega_1,..., aq_i\omega_i \to q'_i \omega_i,... ,q_{2(k+1)}\omega_{2(k+1)} \to q'_{2(k+1)} \omega_{2(k+1)}).
\end{equation}

\begin{equation} \label{type2}
(q_1\omega_1 \to q'_1 \omega_1,...,\alpha_i q_i\omega_i \to \alpha_i q'_i \omega_i, ..., q_{2(k+1)}\omega_{2(k+1)} \to q'_{2(k+1)} \omega_{2(k+1)}).
\end{equation}\\

\begin{observation}\label{symce}

If $M_{\infty}$ is c.e. then $M'_{\infty}$ is c.e..

\end{observation}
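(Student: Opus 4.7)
The plan is to verify that each of the three defining data of $M'_{\infty}$ --- the union of its tape alphabets, the union of its state sets, and its command set --- is c.e., assuming the corresponding sets for $M_{\infty}$ are c.e.. First I would dispose of the alphabets and state letters. Tapes $1$ through $k$ of $M'_{\infty}$ inherit their alphabets from $M_{\infty}$, whose union is c.e. by hypothesis. The new $(k+1)$st tape uses the command symbols $\tau_{\vec{x}}$ of $M_{\infty}$; by the notational convention for countably infinite symbol sets introduced in Section $2$, each such symbol is completely determined by the tuple $\vec{x}$, so the set of command symbols is c.e. whenever the command set of $M_{\infty}$ is c.e.. The tape-splitting step at the end of the construction merely creates copies of these alphabets on additional tapes, so c.e.-ness is preserved. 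For states, each $Q_i$ with $i \le k$ is a subset of the c.e. state set of $M_{\infty}$, the new set $Q_{k+1} = \{q(1),q(2),q(3)\}$ is finite, and the phase $3$ accept-state letters $h'_i$ are finitely many, so the union of state sets of $M'_{\infty}$ is c.e..

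Next I would enumerate the command set of $M'_{\infty}$ in blocks, each block given as a computable image of a c.e. set. The phase $1$ writing commands are in bijection with the command symbols of $M_{\infty}$, hence c.e.. The phase $2$ simulation commands $\tau'$ are in bijection with the commands of $M_{\infty}$; the map $\tau \mapsto \tau'$ acts by the explicit recipe described in Section $4$ and is clearly computable, so this family is c.e.. The transition commands (phase $1 \to 2$ and phase $2 \to 3$) and the phase $3$ erasing commands each form a family uniformly and computably produced from the c.e. data of $\Gamma$ and $Q$ together with the finite sets of auxiliary state letters. Inverses are then added by applying the involution $\tau \mapsto \tau^{-1}$, a computable operation on command symbols, so the inverse commands again form a c.e. set. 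Finally, the tape-splitting adjustment replaces each command by $2(k+1)$ new commands in a uniformly computable way, preserving c.e.-ness.

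Combining these observations will establish that the alphabet, state set, and command set of $M'_{\infty}$ are all c.e., which by the definition at the end of Section $2$ is exactly c.e.-ness of $M'_{\infty}$. The only subtle point is bookkeeping: for each block of commands appearing in the construction one must write down the explicit computable procedure that produces it from the c.e. enumerations of the alphabet, state set, and command set of $M_{\infty}$, so that the resulting enumerations can be interleaved into a single enumeration of the command set of $M'_{\infty}$. There is no mathematical obstacle beyond this routine verification.
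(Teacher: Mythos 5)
Your proposal is correct and takes the same approach the paper implicitly relies on: the paper simply asserts that the observation ``follows immediately from the above description of the construction of $M'_{\infty}$ from $M_{\infty}$,'' and your argument is precisely the routine verification of that claim, checking component by component (tape alphabets, state sets, command blocks, inverses, tape-splitting) that each is uniformly computably enumerable from the c.e.\ data of $M_{\infty}$.
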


This observation follows immediately from the above description of the construction of $M'_{\infty}$ from $M_{\infty}$.\\

The properties of $M'_{\infty}$ that are used in \cite{BRS} to prove Theorem \ref{smalldeal} are listed in \cite[Lemma 3.1]{BORS}.  In particular, $M_{\infty}$ and $M'_{\infty}$ accept the same language.\\

\begin{lemma} \label{symsol}

Suppose $M_{\infty}$ is a $k$-tape union machine accepting a language $L$.  Then $M'_{\infty}$ has solvable configuration problem if and only if $L$ is decidable.  \\

\end{lemma}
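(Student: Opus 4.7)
My plan is to prove both directions of the biconditional.

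For the forward implication, I assume the configuration problem for $M'_{\infty}$ is decidable and deduce decidability of $L$. Given $\textbf{u} \in A^{*}$, I effectively construct the input configuration $\textbf{c}_{\textbf{u}}$ of $M'_{\infty}$ for $\textbf{u}$, which is a specific phase 1 configuration determined by $\textbf{u}$. Since $M_{\infty}$ and $M'_{\infty}$ accept the same language (a fact recorded just before the lemma), $\textbf{u} \in L$ if and only if $\textbf{c}_{\textbf{u}}$ is acceptable in $M'_{\infty}$, which I decide by assumption.

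For the converse, assume $L$ is decidable; I describe an algorithm deciding acceptability of an arbitrary configuration $\textbf{c}$ of $M'_{\infty}$. First I verify that $\textbf{c}$ is well-formed (markers in the correct positions, state and tape letters from the declared alphabets, first tape over $A$); a malformed $\textbf{c}$ is not acceptable. I then classify $\textbf{c}$ by phase via its $(k+1)$st state letter. In phase 1 the phase 1 commands only modify the $(k+1)$st tape, so tapes 1 through $k$ are frozen, and the only escape is the phase 1 to phase 2 transition, which requires tapes 2 through $k$ empty with heads in the $M_{\infty}$ start state. Under these conditions $\textbf{c}$ is acceptable if and only if the tape 1 word $\textbf{u}$ lies in $L$, because I can write any command sequence on the $(k+1)$st tape using phase 1 commands and then simulate an accepting $M_{\infty}$ computation in phase 2.

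The crux is phase 2. The key structural observation is that phase 2 commands and their inverses are deterministic given the current configuration: forward execution consumes the segment $\sigma_L$ of the $(k+1)$st tape immediately left of $q(2)$, one letter at a time from right to left, applying the corresponding $M_{\infty}$ command to tapes 1 through $k$; inverse execution consumes $\sigma_R$ (the portion right of $q(2)$) from left to right, applying inverse $M_{\infty}$ commands. Hence the phase 2 orbit of $\textbf{c}$ is a finite line whose two endpoints I compute by direct simulation. I will show that $\textbf{c}$ is acceptable if and only if either (a) its forward endpoint has $q(2)$ at $\alpha_{k+1}$ with tapes 1 through $k$ in the $M_{\infty}$ accept state, permitting the phase 2 to phase 3 transition and erasure to accept; or (b) its backward endpoint has $q(2)$ at $\omega_{k+1}$ with tapes 2 through $k$ empty and heads in the $M_{\infty}$ start state, and the tape 1 word $\textbf{u}$ lies in $L$, permitting return to phase 1, rewriting of $\sigma$, and a fresh accepting simulation. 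Phase 3 is treated analogously using the deterministic structure of its erasure commands and the fact that the only escape back to phase 2 requires $q(3)$ at $\alpha_{k+1}$ with tapes 1 through $k$ already in the $M_{\infty}$ accept state.

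The main obstacle is justifying in phase 2 that (a) and (b) truly exhaust all acceptance routes. I plan to deduce this from three facts: phase transitions can occur only at the specific boundary configurations identified above; tape 1 is read-only in phases 1 and 2 so the associated input word is invariant outside phase 3; and the phase 2 orbit is a single line, so any acceptance trajectory from $\textbf{c}$ must visit one of its two endpoints in order to leave phase 2, with the only additional freedom gained by re-entering phase 1 being to rewrite the command sequence on the $(k+1)$st tape.
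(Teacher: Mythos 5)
Your proposal follows essentially the same route as the paper's proof: treat the converse direction as the substantive one, decide acceptability of a configuration by casing on its phase, handle phase 1 and phase 3 configurations as base cases decidable directly (phase 1 acceptability reducing to membership of the tape-1 word in $L$, phase 3 acceptability reducing to a small finite list of accept-ready shapes), and observe that the phase 2 dynamics are deterministic because the applicable command at each step is dictated by the command symbol adjacent to $q(2)$ on the history tape, so one can simulate forward and backward to the two endpoints and test whether either endpoint permits a phase transition to an acceptable phase 1 or phase 3 configuration. The paper's write-up is more terse (it says in one line that one can "effectively check if applying either of the sequences of commands written in the history tapes ends in either a phase 1 or a phase 3 configuration"), while you spell out explicitly that the phase 2 orbit is a finite line and that the phase transitions can only fire at the $\alpha_{k+1}$ or $\omega_{k+1}$ endpoints; those are the same facts the paper relies on implicitly, so this is elaboration rather than a different argument.
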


\begin{proof}

In order to determine whether a phase 1 configuration $\textbf{c}$ is acceptable, we first check if all tapes of $\textbf{c}$ except the left half of the input tape (i.e. tape 1) and the left half of the history tape (i.e. tape $(2(k+1)-1$) are empty.  If not, then $\textbf{c}$ is not an acceptable configuration because no computation starting with $\textbf{c}$ can ever exit phase 1.  If all tapes besides the first and the $(2(k+1)-1)$st are empty, then $\textbf{c}$ is an acceptable configuration of $M'_{\infty}$ if and only if the word $w$ written on the input tape of $\textbf{c}$ is an accepted input of $M_{\infty}$.  This is because there is a computation of $M'_{\infty}$ beginning with $\textbf{c}$ that simply erases the left half of the history tape of $\textbf{c}$, leaving the machine in an input configuration $\textbf{c}'$ for the input word $\textbf{u}$.  Since $M_{\infty}$ and $M'_{\infty}$ accept the same language, $M'_{\infty}$ accepts $\textbf{c}'$ (and therefore $\textbf{c}$) if and only if $w\in L$ accepted by $M_{\infty}$.  Thus, if $L$ is decidable, we can decide whether $\textbf{c}$ is acceptable. \\

A phase 3 configuration $\textbf{c}$ is acceptable if all tapes except the 1st (i.e. the left half of input tape) and $2(k+1)$st (i.e. the right half of the history tape) are empty, and the state of $\textbf{c}$ is $(h_1,\dots,h_k,q(3))$.  Alternately, $\textbf{c}$ is acceptable if $\textbf{c}$ has state $(h'_1,\dots,h'_k,q(3))$ and all tapes are empty.  These are the only acceptable phase 3 configurations.\\

If a phase 2 configuration $\textbf{c}$ is an acceptable configuration of $M_{\infty}$, then there must be a phase 2 computation that starts in $\textbf{c}$ and ends in either a phase 1 configuration or a phase 3 configuration.  A reduced phase 2 computation beginning with $\textbf{c}$ must execute a sequence of commands written in one of the two halves of the history tape in $\textbf{c}$.  Recall that each command symbol of $M_{\infty}$ written in these history tapes contains a complete description of the corresponding $M_{\infty}$ command.  Therefore we can effectively recover the finite set of $M'_{\infty}$ commands whose command symbols are written in the history tapes of $\textbf{c}$.  We can then effectively check if applying either of the sequences of commands written in the history tapes ends in either a phase 1 or a phase 3 configuration.  If not, then $\textbf{c}$ is not an acceptable configuration.  If a phase 1 or phase 3 configuration $\textbf{c}'$ can be reached from $\textbf{c}$, then $\textbf{c}$ is acceptable if and only if $\textbf{c}'$ is acceptable.  By the above two paragraphs, we can effectively decide whether $\textbf{c}'$ is acceptable.\\

\end{proof}

\section{Properties of $S(M'_{\infty})$}

An $S$-machine is a group presentation of an HNN-extension of a free group that satisfies some additional conditions.  Certain words in the generators of the base group of such an HNN-extension are thought of as configurations of the S-machine.  As in \cite{BRS}, we call these configurations {\em admissible words}.  The stable letters of the HNN-extension are thought of as the commands of the S-machine.  These stable letters act on the set of admissible words by conjugation.  We formalize this idea below.\\

A {\em hardware} of an $S$-machine is a free group $G=F(\hat{A}\cup \hat{Q})$, where $\hat{A}$ and $\hat{Q}$ are disjoint sets of positive generators.  The hardware will be the base group of the HNN-extension.  We will call $\hat{A}$ the set of tape letters, and $\hat{Q}$ the set of state letters.  The set $\hat{Q}$ is the union of $k$ disjoint sets: $\hat{Q}=\hat{Q}_1\cup...\cup \hat{Q}_k$.\\

A reduced word $w$ in the generators of $G$ is an {\em admissible word} of $G$ if it has the form $w= r_1 w_1 r_2 w_2...r_{k-1}w_{k-1}r_k$, where $r_i\in \hat{Q}_i$ and $w_i\in F(\hat{A})$.  If $i\leq j$, then a subword of an admissible word $w$ of the form $r_iw_i....r_{j}$ is called the $(i,j)$-{\em subword} of $w$.  Note that if $i=j$ then an $(i,j)$-subword consists of only a single $\hat{Q}_i$ letter. \\

An $S$-machine $\mathcal{N}$ is an HNN extension of a hardware $G$.  The set of stable letters of this HNN extension is $\hat{\Theta}=\{\rho_1,\dots,\rho_n,\dots\}$, or the set of {\em command letters} of $\mathcal{N}$.  In \cite{BRS}, it is required that $\hat{A},\hat{Q}_1,\dots, \hat{Q}_k$, and $\hat{\Theta}$ be finite.  We allow these sets to be countably infinite.\\

If $\mathcal{N}$ is an $S$-machine with hardware $G$, then $\mathcal{N}$ is of the form
$$\langle G,\rho_1,\dots,\rho_n \dots \| H_1,\dots,H_n \dots \rangle,$$

\noindent where the $H_i$'s are disjoint sets of relators.  The set $H_i$ corresponds to the stable letter $\rho_i$: every relator in $H_i$ has the form $\rho^{-1}_i x \rho_i=y$, where $x,y$ are words in the generators of $G$.  Also, the letter $\rho_i$ does not appear in the relators of $H_j$ if $j\neq i$.\\

There are two types of relators in each $H_i$: transition relators and auxiliary relators.  The auxiliary relators are $\{\rho_i a \rho_i^{-1}=a|a\in \hat{A}\}$.  The transition relators of $H_i$ have the form $\rho^{-1}_i \textbf{u} \rho_i=a\textbf{v}b$ where, for some $m\leq n\leq k$, the words $\textbf{u},\textbf{v}$ are both $(m,n)$-subwords of admissible words and $a,b\in \hat{A}^{\pm 1} \cup \{\varepsilon\}$.\\

For each $\hat{Q}_j$, there is exactly one transition relator in each $H_i$ in which a $\hat{Q}_j$ letter appears.  Furthermore, if $\rho^{-1}_i \textbf{u} \rho_i=a\textbf{v}b$ is the single $H_i$ relator in which a $\hat{Q}_j$ letter appears then there is exactly one $\hat{Q}_j$ letter in each of the words $\textbf{u}$ and $\textbf{v}$ (this follows from the fact that both $\textbf{u}$ and $\textbf{v}$ are $(m,n)$ subwords of admissible words).\\

We now prove that an $S$-machine is an HNN-extension of its hardware.

\begin{lemma}\label{hnn}

Let  $\mathcal{N}=\langle G,\rho_1,\dots,\rho_n\dots \| H_1,\dots,H_n \dots\rangle$ be an $S$-machine.  Then $\mathcal{N}$ is an HNN-extension of $G$.
\end{lemma}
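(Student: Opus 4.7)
The strategy is to exhibit, for each index $i$, subgroups $A_i, B_i \leq G$ and an isomorphism $\phi_i \colon A_i \to B_i$ so that the relators of $H_i$ are precisely the defining relations $\rho_i^{-1} a \rho_i = \phi_i(a)$ of the HNN-extension of $G$ by the stable letter $\rho_i$ associated to $\phi_i$. Because each $\rho_i$ appears only in $H_i$, once such data $(A_i, B_i, \phi_i)$ are in hand for every $i$, the presentation of $\mathcal{N}$ is exactly the standard presentation of the multiple HNN-extension of $G$ with stable letters $\{\rho_i\}$ and associated isomorphisms $\{\phi_i\}$, which is what we want to conclude.

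Fix $i$ and enumerate the transition relators of $H_i$ as $\rho_i^{-1} \textbf{u}_s \rho_i = a_s \textbf{v}_s b_s$, where each $\textbf{u}_s$ and $\textbf{v}_s$ is an $(m_s, n_s)$-subword of an admissible word. Set $A_i := \langle \hat{A} \cup \{\textbf{u}_s\}_s \rangle$ and $B_i := \langle \hat{A} \cup \{a_s \textbf{v}_s b_s\}_s \rangle$, both as subgroups of the free group $G$. The main step is to verify that these prescribed generating sets are in fact free bases. The key observation is the hypothesis that each $\hat{Q}_j$ letter appears in at most one transition relator of $H_i$: since an $(m_s, n_s)$-subword contains exactly one letter from each $\hat{Q}_j$ with $m_s \le j \le n_s$, this forces the intervals $[m_s, n_s]$ to be pairwise disjoint. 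Hence the $\textbf{u}_s$ involve pairwise disjoint subsets of the free generators $\hat{Q}$; each $\textbf{u}_s$ begins and ends with a $\hat{Q}$-letter that occurs in no other generator, and the $\hat{A}$-letters are independent of any $\hat{Q}$-letter. A routine check then shows no cancellation among products over $\hat{A}^{\pm 1} \cup \{\textbf{u}_s^{\pm 1}\}$ can shorten a factor, so the set is Nielsen-reduced and is therefore a free basis of $A_i$. The identical argument, replacing $\textbf{u}_s$ by $\textbf{v}_s$ (and observing that the terminal letters $a_s, b_s \in \hat{A}^{\pm 1} \cup \{\varepsilon\}$ do not interfere with the $\hat{Q}$-endpoints of $\textbf{v}_s$), gives that $\hat{A} \cup \{a_s \textbf{v}_s b_s\}$ is a free basis of $B_i$.

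Once freeness is established, the bijection $a \mapsto a$ for $a \in \hat{A}$ and $\textbf{u}_s \mapsto a_s \textbf{v}_s b_s$ extends uniquely to an isomorphism $\phi_i \colon A_i \to B_i$ of free groups of matching rank. The relators in $H_i$ are then exactly the defining relations of the HNN-extension with base $G$, stable letter $\rho_i$, and associated isomorphism $\phi_i$: the auxiliary relators encode $\phi_i|_{\hat{A}} = \mathrm{id}$, and the transition relators encode $\phi_i(\textbf{u}_s) = a_s \textbf{v}_s b_s$. Since the relator blocks $H_i$ are pairwise disjoint in their stable letters, iterating (or, equivalently, forming the multiple HNN-extension in one step) exhibits $\mathcal{N}$ as an HNN-extension of $G$. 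The principal obstacle is the Nielsen-reducedness step; everything else is bookkeeping once the indicated generating sets are shown to be free bases, and that step relies essentially on the at-most-one-$H_i$-relator-per-$\hat{Q}_j$ hypothesis built into the definition of an $S$-machine.
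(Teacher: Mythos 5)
Your approach is essentially the same as the paper's: treat each $H_i$ separately, exhibit the $\textbf{x}_\ell$-side and $\textbf{y}_\ell$-side generating sets as free bases of associated subgroups via a Nielsen-reducedness argument driven by the disjointness of $\hat{Q}$-letters, and conclude that $\rho_i$ conjugates one onto the other. One imprecision worth flagging: the set $\hat{A}\cup\{a_s\textbf{v}_s b_s\}_s$ is generally \emph{not} Nielsen reduced — when $a_s\in\hat{A}^{\pm 1}$, the product $a_s^{-1}\cdot(a_s\textbf{v}_s b_s)=\textbf{v}_s b_s$ is strictly shorter than $a_s\textbf{v}_s b_s$, so condition (N1) fails — hence ``the identical argument'' does not apply verbatim, contrary to your parenthetical that the terminal letters $a_s,b_s$ ``do not interfere.'' The paper handles this by first performing explicit Nielsen transformations (multiplying by the auxiliary generators in $\hat{A}$) to strip $a_s$ and $b_s$ from each $\textbf{y}_\ell$, obtaining $\hat{A}\cup\{\textbf{v}_s\}$, which \emph{is} Nielsen reduced by the same $\hat{Q}$-disjointness argument; since Nielsen transformations preserve the free-basis property and introduce no trivial elements here, your conclusion that $\hat{A}\cup\{a_s\textbf{v}_s b_s\}$ is a free basis of $B_i$ is still correct, but the intermediate reduction step is needed and should be made explicit.
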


\begin{proof}

Suppose $\textbf{r}_1,\textbf{r}_2,\dots,\textbf{r}_s\dots$ are the relators of $H_i$.  Then each $\textbf{r}_{\ell}$ is of the form $\rho^{-1}_i \textbf{x}_{\ell} \rho_i=\textbf{y}_{\ell}$ where $\textbf{x}_{\ell},\textbf{y}_{\ell}$ are words in the generators of $G$.  Note that for each $\ell=1,2,\dots$, the word $\textbf{x}_{\ell}$ is either a single letter of $\hat{A}$ (if $\textbf{r}_{\ell}$ is an auxiliary relator) or an $(m_{\ell},n_{\ell})$-subword of an admissible word (if $\textbf{r}_{\ell}$ is a transition relator).  Each such $(m_{\ell},n_{\ell})$-subword contains at least one $\hat{Q}$ letter, and no two distinct such $(m_{\ell},n_{\ell})$-subwords share a common $\hat{Q}$ letter.  Therefore the elements $\textbf{x}_1,\dots,\textbf{x}_s\dots$ are Nielsen reduced and they are free generators of a subgroup of $G$.\\

We will show that the elements $\textbf{y}_1,\dots,\textbf{y}_s\dots$ also freely generate a subgroup of $G$.  For all $a\in \hat{A}$ there exists an auxiliary relator $\textbf{r}_{\ell}\in H_i$ such that $\textbf{y}_{\ell}=a$.  If $\textbf{r}_{\ell}$ is a transition relator then $\textbf{y}_{\ell}$ is a word of the form $a\textbf{v}b$, where $\textbf{v}$ is an $(m_{\ell},n_{\ell})$-subword of an admissible word and $a,b\in \hat{A}^{\pm 1} \cup \{\varepsilon\}$.  If, for a given $\textbf{y}_{\ell}$, the letter $a$ (or $b$) is not $\varepsilon$ then we can perform a Neilson reduction on $\textbf{y}_1,\dots,\textbf{y}_s\dots$ to remove $a$ (or $b$) from $\textbf{y}_{\ell}$.  After we remove all such $a$ and $b$ letters from the words $\textbf{y}_1,\dots,\textbf{y}_s\dots$, the resulting set of words is Neilson reduced by the argument in the above paragraph.  Therefore the elements $\textbf{y}_1,\dots,\textbf{y}_s\dots$ freely generate a subgroup of $G$, and the map $\textbf{x}_{\ell}\mapsto \textbf{y}_{\ell}$ induces an isomorphism of subgroups of $G$.  We conclude that $\mathcal{N}$ is an HNN-extension of $G$.\\

\end{proof}

If $\mathcal{N}_1$ is an $S$-machine, we say that an $S$-machine $\mathcal{N}_2$ is a {\em submachine} of $\mathcal{N}_1$ if every generator of the hardware of $\mathcal{N}_2$ is a generator of the hardware of $\mathcal{N}_1$, every command letter of $\mathcal{N}_2$ is a command letter of $\mathcal{N}_1$, and every relator of $\mathcal{N}_2$ is a relator of $\mathcal{N}_1$.\\

If $W_1$ and $W_2$ are admissible words of $\mathcal{N}$ and the equation $W_1=\rho^{\pm1} W_{2} \rho^{\mp 1}$ holds in $\mathcal{N}$, then we say that the command letter $\rho^{\pm1}$ can be applied to $W_1$ and the command letter $\rho^{\mp1}$ can be applied to $W_2$.  A {\em computation} of an $S$-machine $\mathcal{N}$ is a sequence of admissible words $W_1,...,W_n$ such that for each $i\geq 2$, the equation $W_i=\rho^{\pm1} W_{i-1} \rho^{\mp 1}$ holds in $\mathcal{N}$ for some command letter $\rho$ of $\mathcal{N}$.\\

For an $S$-machine $\mathcal{N}$, we may designate a single admissible word $W_0$ as the ``accept configuration" of the machine.  We say that an admissible word $W$ is {\em acceptable} by $\mathcal{N}$ if there is a computation of $\mathcal{N}$ that begins with $W$ and ends with $W_0$.\\

We define the complexity functions of an $S$-machine the same way we defined them for Turing machines: simply replace the word ``configuration" with ``admissible word".\\

In \cite{BRS}, the authors construct an $S$-machine $S(M')$ to simulate the symmetrization $M'$ of an arbitrary Turing machine $M$.  This construction does not rely on the finiteness of $M'$.  The arguments given in \cite{BRS} actually prove that the exact same construction can be used to produce an $S$-machine $S(M'_{\infty})$ to simulate the symmetrization $M'_{\infty}$ of an arbitrary union machine $M_{\infty}$.  In this section, we will often cite lemmas of \cite{BRS} as though they were statements about $S(M'_{\infty})$ instead of $S(M')$.  When we do this, it should be understood that the proofs of those lemmas about $S(M')$ as they are stated in \cite{BRS} suffice to prove the corresponding lemmas about $S(M'_{\infty})$ as well. \\

The simulation of $M'_{\infty}$ by $S(M'_{\infty})$ relies on an injective map $\sigma$ from the set of configurations of $M'_{\infty}$ to the set of admissible words of $S(M'_{\infty})$.  The definition of $\sigma(\textbf{c})$ appears on page 400 of \cite{BRS}.  \\

\begin{lemma}\label{recover}

Given an admissible word $W$ of $S(M'_{\infty})$, it is possible to decide in linear time whether or not $W=\sigma(\textbf{c})$ for some configuration $\textbf{c}$ of $M'_{\infty}$.  Also, in the case that $W=\sigma(\textbf{c})$, it is possible to effectively recover $\textbf{c}$ from $W$ in linear time.

\end{lemma}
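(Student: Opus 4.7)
The plan is to appeal to the explicit definition of $\sigma$ given on page 400 of \cite{BRS}, which encodes a configuration $\textbf{c}=(\textbf{c}_1,\ldots,\textbf{c}_{k+1})$ of $M'_{\infty}$ as an admissible word of $S(M'_{\infty})$ in a purely syntactic and local way. The image word $\sigma(\textbf{c})$ has a rigid template: reading left to right, one sees a prescribed pattern of tape-letter subwords and state letters, each subword corresponding to one segment of the individual tape configurations $\textbf{c}_i$, and with the state letters drawn from specific subsets of $\hat{Q}_1,\ldots,\hat{Q}_k$ determined by the state $\vec{q}$ of $M'_{\infty}$ and by which tape the segment represents.

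First I would parse $W$ left to right using the defining factorization $W=r_1 w_1 r_2 w_2\ldots r_{k-1}w_{k-1}r_k$ of an admissible word and check at each position that the local data match the template for $\sigma$. Since each symbol of the (countably infinite) tape and state alphabets is, by the convention of Section 2, of the form $x_{\textbf{y}}$ with $x$ from a finite alphabet and $\textbf{y}$ an index word that carries its own descriptive data (for instance, state letters $q_k^{\textbf{y}}$ contain $\textbf{y}$, and command-indexed letters $\tau_{\vec{x}}$ contain a full description of the $M_{\infty}$ command), each single-symbol classification can be done locally. The total time for the scan is linear in $|W|$. The mutual consistency conditions, that the state letters $r_1,\ldots,r_k$ assemble into one coherent tuple $(q_1,\ldots,q_k)$ and that the end markers appear exactly where the template requires, are also local and can be verified during the same scan.

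If $W$ passes the parse, then by injectivity of $\sigma$ each factor of $W$ is forced to be the encoding of a unique piece of a unique tape configuration $\textbf{c}_i$, so one simultaneously reads off $\textbf{c}=(\textbf{c}_1,\ldots,\textbf{c}_{k+1})$ during the same linear scan. The main point to verify carefully will be that the template described in \cite{BRS} really is local, i.e., that whether $W$ lies in the image of $\sigma$ can be determined without any global computation beyond the linear left-to-right pass; this is immediate from the way $\sigma$ is built tape-by-tape in \cite{BRS} and is unaffected by the countability of the ingredient alphabets, because the decodability of each individual infinite-alphabet symbol is built into our notational convention from Section 2.
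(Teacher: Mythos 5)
Your proposal is correct and takes the same route as the paper: the paper's proof is a single sentence stating that the lemma follows immediately from the definition of $\sigma$ in \cite{BRS}, and your argument simply unpacks why, namely that $\sigma$ is a local, syntactic encoding and that the Section~2 convention for countably infinite alphabets makes each symbol decodable, so a single left-to-right parse both checks membership in the image of $\sigma$ and reads off $\textbf{c}$. The extra detail you supply (the template check, the role of the indexing convention, and the linearity of the scan) is consistent with the paper's intent and adds no new idea, so the approaches are essentially identical.
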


\begin{proof}

This lemma follows immediately from the definition of $\sigma(\textbf{c})$ given in \cite{BRS}.

\end{proof}

There are some additional facts about the $S(M'_{\infty})$ construction that we will require.  The machine $S(M'_{\infty})$ is a union of disjoint submachines $\mathcal{R}_{\tau}$, each of which corresponds to a command $\tau$ of $M'_{\infty}$.  Each $\mathcal{R}_{\tau}$ contains finitely many command letters.  The purpose of the submachine $\mathcal{R}_{\tau}$ is to allow $S(M'_{\infty})$ to pass from $\sigma(\textbf{c})$ to $\sigma(\textbf{c}')$ if and only if the $M'_{\infty}$ command $\tau$ takes $\textbf{c}$ to $\textbf{c}'$.  The command letters of $\mathcal{R}_{\tau}$ are all indexed by the $M'_{\infty}$ command symbol $\tau$.\\

If a command $\tau$ of $M'_{\infty}$ is of the form (\ref{type1}) then $\mathcal{R}_{\tau}$ is itself composed of several submachines which are denoted in \cite{BRS} by $S_4(\tau)$, $S_9(\tau)$, $R_4(\tau)$, $R_{4,9}(\tau)$, and $R_9(\tau)$.  If a command $\tau'$ of $M'_{\infty}$ is of the form (\ref{type2}), then $\mathcal{R}_{\tau'}$ is composed of a single submachine $P(\tau')$, which contains only a single command letter.  A concise description of the function of each of these machines can be found on pages 397-398 of \cite{BRS}. Their formal definitions are located on pages 374-396 of \cite{BRS}.  Note that the notation $\mathcal{R}_{\tau}$ does not appear in \cite{BRS}.  We use it here for convenience.\\

\begin{lemma} \label{comenu}

There is an algorithm that, when given as input an $M'_{\infty}$ command $\tau$ of type (\ref{type1}) or $\tau'$ of type (\ref{type2}), outputs the set of transition relators of $\mathcal{R}_{\tau}$ or $\mathcal{R}_{\tau'}$, respectively.

\end{lemma}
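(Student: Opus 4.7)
The plan is to observe that the construction of $\mathcal{R}_\tau$ from $\tau$ in \cite{BRS} is entirely \emph{schematic} in $\tau$: each transition relator is obtained by substituting the letters appearing in the tuple $\vec{x}$ indexing $\tau$ into a fixed template. By the convention adopted in Section 2, the symbol $\tau = \tau_{\vec{x}}$ explicitly contains $\vec{x}$, so one can effectively read off from $\tau$ the letters $u_i, q_i, v_i, u_i', q_i', v_i'$ for each tape of $M'_\infty$. The algorithm will then be: determine the type of $\tau$ from $\vec{x}$, and apply the appropriate substitution to the templates from \cite{BRS}.

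For a command $\tau'$ of type (\ref{type2}), we have $\mathcal{R}_{\tau'} = P(\tau')$, which contains exactly one command letter whose transition relators (defined on pages 394--396 of \cite{BRS}) depend only on the single-tape components of $\tau'$. Since these are directly recoverable from $\tau'$, the finite list of transition relators of $P(\tau')$ is produced by straightforward substitution. For a command $\tau$ of type (\ref{type1}), $\mathcal{R}_\tau$ decomposes as the union of the submachines $S_4(\tau), S_9(\tau), R_4(\tau), R_{4,9}(\tau), R_9(\tau)$. Each of these is defined in \cite{BRS} (pages 374--394) by a finite template whose parameters are precisely the entries of $\vec{x}$, together with auxiliary symbols fixed by the $S$-machine construction itself. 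Concatenating the finite lists of transition relators obtained from each of the five submachines yields the full set of transition relators of $\mathcal{R}_\tau$.

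The main (and essentially only) obstacle is bookkeeping: one must verify, by inspecting the definitions in \cite{BRS}, that no step in the construction of $\mathcal{R}_\tau$ references any data about $M'_\infty$ beyond $\tau$ itself; in particular, nothing should depend on the full sets $\Gamma$, $Q$, or $\Theta$ of $M'_\infty$. This locality is built into the $\mathcal{R}_{\tau}$-by-$\mathcal{R}_{\tau}$ decomposition of $S(M'_\infty)$ noted in the previous paragraphs, since the submachines $\mathcal{R}_\tau$ are disjoint and each $\mathcal{R}_\tau$ is constructed from $\tau$ alone. Once this observation is confirmed, the algorithm is immediate, and its correctness follows directly from the definitions in \cite{BRS}.
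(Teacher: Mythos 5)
Your proposal is correct and matches the paper's proof in approach: both observe that the definitions of $S_4(\tau)$, $S_9(\tau)$, $R_4(\tau)$, $R_{4,9}(\tau)$, $R_9(\tau)$, and $P(\tau')$ in \cite{BRS} are explicit templates in the data carried by the symbol $\tau_{\vec{x}}$, so the algorithm is simply to read off $\vec{x}$ and substitute. You spell out the bookkeeping (the five-submachine decomposition for type (\ref{type1}), the single submachine $P(\tau')$ for type (\ref{type2}), and the locality observation that nothing depends on the global sets $\Gamma$, $Q$, $\Theta$) that the paper leaves implicit in its one-line proof.
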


\begin{proof}

The definitions of $S_4(\tau)$, $S_9(\tau)$, $R_4(\tau)$, $R_{4,9}(\tau)$, $R_9(\tau)$, and $P(\tau')$ given in \cite{BRS} describe exactly how to effectively construct these transition relators from $\tau$ (or $\tau'$).  The ``algorithm" referred to in the lemma is simply the process of following these instructions.

\end{proof}

For any command $\tau$ or $\tau'$ in $M'_{\infty}$ (of type (\ref{type1}) or (\ref{type2}) respectively), the submachines $R_4(\tau)$, $R_9(\tau)$, and $P(\tau')$ each contain a single positive command letter.  In the case of each of these three submachines, we will use the same notation to denote both the machine and its single positive command letter.  For example if we say that $R_4(\tau)^{-1}$ can be applied to an admissible word, then we mean that the inverse of the single positive command letter contained in the submachine $R_4(\tau)$ can be applied to that admissible word.\\

The state letters of $S(M'_{\infty})$ are divided into two types: standard and non-standard.  Each non-standard state letter of $S(M'_{\infty})$ is indexed by a positive command symbol of $M'_{\infty}$.  The standard state letters are not indexed by any command symbols of $M'_{\infty}$.  A complete description of the state letters of $S(M'_{\infty})$ can be found on page 397 of \cite{BRS}.\\

\begin{lemma} \label{standard}

Suppose $W$ is an admissible word of $S(M'_{\infty})$ such that all state letters appearing in $W$ are standard.  Then the only command letters of $S(M'_{\infty})$ that may be applied to $W$ are $R_4(\tau)$, $R_9(\tau)^{-1}$, or $P(\tau')^{\pm 1}$ for $M'_{\infty}$ commands $\tau$ of type (\ref{type1}) or $\tau'$ of type (\ref{type2}).

\end{lemma}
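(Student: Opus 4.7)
The plan is to reduce the lemma to a case-by-case inspection of the command letters of $S(M'_{\infty})$. Since $S(M'_{\infty})$ is a union of submachines $\mathcal{R}_{\tau}$ (one for each $M'_{\infty}$ command $\tau$), and each $\mathcal{R}_{\tau}$ is itself a union of the submachines $S_4(\tau)$, $S_9(\tau)$, $R_4(\tau)$, $R_{4,9}(\tau)$, $R_9(\tau)$ when $\tau$ has type (\ref{type1}), or consists of the single submachine $P(\tau')$ when $\tau'$ has type (\ref{type2}), it suffices to check, for every command letter $\rho$ of every such submachine, which of $\rho^{\pm 1}$ can act on an admissible word $W$ whose state letters are all standard. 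The criterion is immediate from the form of a transition relator $\rho^{-1}\mathbf{u}\rho=a\mathbf{v}b$: the letter $\rho$ can be applied to $W$ only if the $\hat{Q}$-letters in $\mathbf{u}$ appear in $W$, while $\rho^{-1}$ can be applied to $W$ only if the $\hat{Q}$-letters in $a\mathbf{v}b$ appear in $W$.

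I would then invoke the description of the state letters of $S(M'_{\infty})$ on page 397 of \cite{BRS}: standard state letters are exactly those not indexed by an $M'_{\infty}$ command symbol, and every non-standard state letter is indexed by a positive $M'_{\infty}$ command symbol. Inspecting the transition relators defining $S_4(\tau)$, $S_9(\tau)$, and $R_{4,9}(\tau)$ on pages 374--396 of \cite{BRS}, both sides involve $\tau$-indexed (hence non-standard) state letters, so none of $S_4(\tau)^{\pm 1}$, $S_9(\tau)^{\pm 1}$, $R_{4,9}(\tau)^{\pm 1}$ can act on $W$. The letter $R_4(\tau)$ replaces a standard state letter by a non-standard one and so can be applied to $W$, whereas $R_4(\tau)^{-1}$ requires a non-standard state letter to be present and so cannot; symmetrically, $R_9(\tau)^{-1}$ can be applied to $W$ but $R_9(\tau)$ cannot. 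Finally, both sides of the single transition relator defining $P(\tau')$ involve only standard state letters, so $P(\tau')^{\pm 1}$ can always be applied to $W$. Collecting these observations yields the lemma.

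The main obstacle is not mathematical but bookkeeping. The relators defining the various submachines occupy a substantial portion of \cite{BRS}, and the verification consists in reading off, for each such relator, on which side the non-standard state letters occur. To keep the write-up manageable I would organize the verification as a short table indexed by submachine, recording in two columns whether the left-hand side and right-hand side of each relator involve a non-standard state letter; the conclusion of the lemma can then be read off directly from the table.
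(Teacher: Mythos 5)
Your proposal is correct and takes essentially the same approach as the paper, which simply cites the description of the $S(M'_{\infty})$ commands in \cite{BRS} (pp.\ 397--399) and leaves the case-by-case inspection implicit; you have merely spelled out what that inspection consists of, namely checking for each submachine on which side of its transition relators the non-standard (command-indexed) state letters appear.
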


\begin{proof}

This follows immediately from the description of the commands of $S(M'_{\infty})$ on pages 397-399 of \cite{BRS}.

\end{proof}

In \cite{BRS}, the authors call an admissible word $W$ {\em normal} if it fulfills certain properties (the definition is on page 400 of \cite{BRS}).  For our purposes, the details of this definition are not important.  It will suffice to note that it is stated on page 403 of \cite{BRS} that the commands of $S(M'_{\infty})$ take normal words to normal words, and that every admissible word $\sigma(\textbf{c})$ is a normal word. \\

\begin{lemma} \label{normal}

Let $W$ be an admissible word of $S(M'_{\infty})$.  Suppose that $W$ is positive and normal.  Suppose also that one of the command letters $R_4(\tau)$, $R_9(\tau)^{-1}$, $P(\tau')$ can be applied to $W$.  Then $W=\sigma(\textbf{c})$ for some configuration $\textbf{c}$ of $M'_{\infty}$.

\end{lemma}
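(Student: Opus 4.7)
The strategy is to exploit the specific forms of the command letters together with the normality and positivity of $W$. The proof splits into two steps: (i) the applicability of any one of $R_4(\tau)$, $R_9(\tau)^{-1}$, $P(\tau')$ forces every state letter of $W$ to be standard; and (ii) a positive normal admissible word whose state letters are all standard is necessarily of the form $\sigma(\textbf{c})$ for a unique configuration $\textbf{c}$ of $M'_{\infty}$.

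For step (i), I would use Lemma \ref{comenu} to obtain the finite set of transition relators for each of the three specified command letters, and then consult the explicit constructions of $R_4(\tau)$, $R_9(\tau)$, and $P(\tau')$ on pages 374--396 of \cite{BRS}. The key fact to extract from that construction is that every $\hat{Q}_j$ letter appearing on the left-hand side of any of those transition relators is a standard state letter; the submachines $R_4(\tau)$, $R_9(\tau)^{-1}$, and $P(\tau')$ serve precisely to enter or leave the ``standard layer'' of $S(M'_{\infty})$. Since the HNN-applicability of a command letter to $W$ requires, sector by sector, that the unique $\hat{Q}_j$ letter of $W$ coincide with the unique $\hat{Q}_j$ letter occurring in the matching transition relator, every state letter of $W$ must then be standard.

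For step (ii), I would appeal directly to the definition of $\sigma(\textbf{c})$ on page 400 of \cite{BRS}. A positive admissible word with only standard state letters is determined by the tape-letter words sitting between consecutive state letters, and the normality conditions (also on page 400 of \cite{BRS}) pin down exactly the constraints on those tape-letter words that match the form of $\sigma(\textbf{c})$: the end markers $\alpha_i$ and $\omega_i$ sit in the prescribed positions, the intervening tape words lie in the appropriate alphabets, and no spurious subwords are present. Hence the decoding procedure underlying Lemma \ref{recover} produces a configuration $\textbf{c}$ of $M'_{\infty}$ with $W = \sigma(\textbf{c})$.

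The principal obstacle is step (i): one must check, submachine by submachine, that no non-standard (i.e.\ $\tau$- or $\tau'$-indexed) state letter occurs in the left-hand side of any transition relator of $R_4(\tau)$, $R_9(\tau)^{-1}$, or $P(\tau')$. This is a routine but tedious inspection of the explicit definitions in \cite{BRS}; once it is in hand, step (ii) is essentially a bookkeeping exercise unpacking the normality conditions and the definition of $\sigma(\textbf{c})$ on page 400 of \cite{BRS}.
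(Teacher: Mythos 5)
The paper's ``proof'' is a bare citation to \cite[Lemma 4.15]{BRS}, and (as the proof of Corollary \ref{cor1} remarks) that lemma is itself stated without proof in \cite{BRS} because it ``follows immediately from the definition of $S(M'_{\infty})$.'' Your proposal therefore does not conflict with the paper; it unpacks the phrase ``follows from the definition'' into a concrete two-step verification --- (i) applicability of one of these command letters forces all state letters of $W$ to be standard, and (ii) positive $+$ normal $+$ all-standard pins down $W = \sigma(\textbf{c})$ --- which is a reasonable way to organize the check that neither this paper nor \cite{BRS} actually writes out.

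One slip in step (i) worth flagging: you propose to inspect the left-hand sides $\textbf{u}$ of the transition relators $\rho^{-1}\textbf{u}\rho = a\textbf{v}b$ uniformly for all three command letters. But under the paper's convention that $\rho^{\pm1}$ can be applied to $W_1$ when $W_1 = \rho^{\pm1}W_2\rho^{\mp1}$, a positive stable letter $\rho$ applies to admissible words built from the domain side $\textbf{u}$, whereas $\rho^{-1}$ applies to those built from the codomain side $a\textbf{v}b$. Since the hypothesis involves $R_9(\tau)^{-1}$ rather than $R_9(\tau)$, you should be inspecting the \emph{right}-hand sides of the $R_9(\tau)$ transition relators, not the left. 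This is a bookkeeping correction, not a structural flaw, but it needs to be sorted out before the case-by-case inspection of the submachine definitions on pages 374--396 of \cite{BRS}.
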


\begin{proof}

This is Lemma 4.15\cite{BRS}.

\end{proof}

\begin{corollary}\label{cor1}

Let $W$ be an admissible word of $S(M'_{\infty})$.  Suppose that $W$ is positive and normal, and that a command letter $P(\tau')^{-1}$ can be applied to $W$.  Then $W=\sigma(\textbf{c})$ for some configuration $\textbf{c}$ of $M'_{\infty}$.

\end{corollary}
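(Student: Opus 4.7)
The plan is to derive Corollary \ref{cor1} from Lemma \ref{normal} by exploiting the symmetry of $M'_{\infty}$ and how that symmetry propagates to the $S$-machine $S(M'_{\infty})$. Since $M'_{\infty}$ was constructed in Section 4 so as to contain the inverse of every command, the command $(\tau')^{-1}$ also belongs to $M'_{\infty}$. Inverting a single-tape subcommand of form $\alpha_i q_i\omega_i \to \alpha_i q'_i\omega_i$ simply swaps the two states while leaving the end markers in place, so form (\ref{type2}) is closed under inversion; hence $(\tau')^{-1}$ is again a type (\ref{type2}) command, and the submachine $P((\tau')^{-1})$ of $S(M'_{\infty})$ is defined together with its own positive command letter.

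The next step is to compare the transition relators of the two submachines $P(\tau')$ and $P((\tau')^{-1})$. By the formal construction of $P(\cdot)$ given on pages 374--396 of \cite{BRS}, the transition relator of $P(\tau')$ encodes, via a rewrite $\textbf{u} \mapsto a\textbf{v}b$, the effect that $\tau'$ has on the $\sigma$-images of configurations; dually, the transition relator of $P((\tau')^{-1})$ encodes the inverse rewrite $a\textbf{v}b \mapsto \textbf{u}$. In particular, the pattern whose occurrence in an admissible word $W$ makes $P(\tau')^{-1}$ applicable coincides with the pattern whose occurrence in $W$ makes the positive command letter of $P((\tau')^{-1})$ applicable. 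Combining these observations, if $P(\tau')^{-1}$ can be applied to $W$, then the positive command letter of $P((\tau')^{-1})$ can be applied to $W$ as well. Lemma \ref{normal}, applied to $W$ together with the type (\ref{type2}) command $(\tau')^{-1}$ of $M'_{\infty}$, then yields a configuration $\textbf{c}$ of $M'_{\infty}$ with $W = \sigma(\textbf{c})$.

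The main obstacle is the matching of transition relators in the middle step: this requires a direct verification against the explicit $P(\cdot)$ construction in \cite{BRS}, since a purely HNN-theoretic argument would only show that $P(\tau')^{-1}$ takes $W$ back to some admissible preimage $W'$, and nothing in the HNN structure alone tells us that $W'$ is positive and normal (so Lemma \ref{normal} could not be invoked on $W'$ directly). Routing the argument through the companion submachine $P((\tau')^{-1})$ instead lets us apply Lemma \ref{normal} to the original $W$, which is already assumed positive and normal, and thereby sidesteps this difficulty.
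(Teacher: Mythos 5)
Your proof is correct in its overall logic but takes a genuinely different and more explicit route than the paper. The paper's proof of Corollary~\ref{cor1} is a one-liner: just as Lemma~\ref{normal} (= Lemma 4.15 of \cite{BRS}) is stated without proof because it follows immediately from the definition of $S(M'_{\infty})$, so does Corollary~\ref{cor1}. In effect the authors treat the inclusion of $P(\tau')^{-1}$ in the list of forcing commands as a routine inspection of the same definitions that establish Lemma~\ref{normal}. You instead \emph{derive} Corollary~\ref{cor1} from Lemma~\ref{normal} by exploiting the symmetry of $M'_{\infty}$: since $(\tau')^{-1}$ is itself a type~(\ref{type2}) command, $S(M'_{\infty})$ contains the companion submachine $P((\tau')^{-1})$, and by the uniformity of the $P(\cdot)$ construction in \cite{BRS} the applicability condition for $P(\tau')^{-1}$ (the state pattern on the right-hand side of the $P(\tau')$ transition relator) coincides with that for the positive command letter of $P((\tau')^{-1})$ (the state pattern on the left-hand side of its relator); Lemma~\ref{normal} applied to the type~(\ref{type2}) command $(\tau')^{-1}$ then finishes the argument. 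This route buys you a modular deduction that uses Lemma~\ref{normal} as a black box and makes the role of the symmetrization visible, at the cost of leaving the transition-relator matching as a (correct but unspelled) verification against pages 374--399 of \cite{BRS}. You were right to note that a purely HNN-theoretic argument would not suffice, since it gives no control over positivity or normality of the conjugate $\rho W \rho^{-1}$; routing through the companion submachine and applying Lemma~\ref{normal} to $W$ itself is exactly the right way to avoid that pitfall.
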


\begin{proof}

In \cite{BRS}, Lemma 4.15\cite{BRS} is stated without proof because it follows immediately from the definition of $S(M'_{\infty})$.  Similarly, this corollary (not stated in \cite{BRS}) follows immediately from the definition of $S(M'_{\infty})$ as well.

\end{proof}

\begin{lemma}\label{standard2}

Suppose $W$ is an admissible word of $S(M'_{\infty})$ and there is a computation $C$ of $S(M'_{\infty})$ that starts with $W_0$ and ends with $W$.  If every state letter appearing in $W$ is standard, then $W=\sigma(\textbf{c})$ for some configuration $\textbf{c}$ of $M'_{\infty}$.

\end{lemma}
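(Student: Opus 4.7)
The plan is to exhibit a command letter applicable to $W$, verify that $W$ is positive and normal, and then appeal to Lemma \ref{normal} and Corollary \ref{cor1}. First dispose of the trivial case in which $C$ has length zero: by construction of $S(M'_{\infty})$, the accept admissible word $W_0$ equals $\sigma(\mathbf{c}_h)$ for the accept configuration $\mathbf{c}_h$ of $M'_{\infty}$, so $W = W_0$ is immediately of the desired form.

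Assume henceforth that $C$ has positive length, and let $\rho^{\epsilon}$ be the command letter realizing the last step of $C$, taking the penultimate admissible word to $W$. By the symmetry in the definition of an applicable command letter, $\rho^{-\epsilon}$ can be applied to $W$. Since every state letter appearing in $W$ is standard, Lemma \ref{standard} forces $\rho^{-\epsilon}$ to be one of the command letters $R_4(\tau)$, $R_9(\tau)^{-1}$, $P(\tau')$, or $P(\tau')^{-1}$ for some $M'_{\infty}$ command $\tau$ of the form \eqref{type1} or $\tau'$ of the form \eqref{type2}.

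Normality of $W$ is easy: $W_0=\sigma(\mathbf{c}_h)$ is normal, and as noted on page 403 of \cite{BRS} the commands of $S(M'_\infty)$ preserve normality, so inductively every admissible word along $C$, and in particular $W$, is normal. Positivity of $W$ is where I expect the main obstacle to lie. My strategy here is an induction on the number of indices $i$ at which every state letter of the intermediate admissible word $V_i$ along $C$ is standard. In the inductive step one analyzes a maximal subcomputation of $C$ that begins and ends at all-standard admissible words but passes only through words with at least one non-standard state letter in between; appealing to the detailed descriptions of the submachines $S_4(\tau)$, $S_9(\tau)$, $R_4(\tau)$, $R_{4,9}(\tau)$, $R_9(\tau)$, and $P(\tau')$ on pages 374--398 of \cite{BRS}, one verifies that such an excursion, when it begins at a word of the form $\sigma(\mathbf{c})$, can return to an all-standard admissible word only via $R_4(\tau)$, $R_9(\tau)^{-1}$, or $P(\tau')^{\pm 1}$, and the resulting word is again a positive word of the form $\sigma(\mathbf{c}')$.

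With $W$ now positive and normal and with one of the command letters $R_4(\tau)$, $R_9(\tau)^{-1}$, $P(\tau')$, or $P(\tau')^{-1}$ applicable to it, Lemma \ref{normal} (in the first three cases) or Corollary \ref{cor1} (in the last case) yields $W = \sigma(\mathbf{c})$ for some configuration $\mathbf{c}$ of $M'_{\infty}$, completing the proof.
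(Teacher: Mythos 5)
Your proof has the right overall skeleton and does match the paper's route in its framing: both proofs aim to show $W$ is positive and normal, identify that by Lemma \ref{standard} the command applicable to $W$ must be $R_4(\tau)$, $R_9(\tau)^{-1}$, or $P(\tau')^{\pm 1}$, and then conclude via Lemma \ref{normal} and Corollary \ref{cor1}. Your handling of normality (propagating from $W_0$ along $C$) is the same argument the paper uses, and your disposal of the length-zero case is sound, if slightly more explicit than the paper bothers to be.

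However, the positivity step, which you yourself flag as the main obstacle, is a genuine gap in your write-up. You describe an induction on the number of intermediate all-standard admissible words along $C$, claim that each excursion between consecutive all-standard words returns to a positive word of the form $\sigma(\mathbf{c}')$, and defer the verification to ``the detailed descriptions of the submachines... on pages 374--398 of \cite{BRS}.'' That is not a proof but a program; the claim you are deferring is essentially a strengthened version of the lemma itself applied at every all-standard waypoint, and establishing it would require a case analysis across $S_4(\tau)$, $S_9(\tau)$, $R_4(\tau)$, $R_{4,9}(\tau)$, $R_9(\tau)$, and $P(\tau')$ that you have not carried out. The paper avoids this work by invoking the decomposition $C = C_1\cdots C_N$ into sub-computations of the individual submachines (page 403 of \cite{BRS}), observing that Lemma \ref{standard} pins down the submachine of $C_N$, and then citing a specific statement on page 406 of \cite{BRS} that the first word of $C_N$ is positive; since the command letters $P(\tau')^{\pm1}$, $R_4(\tau)$, $R_9(\tau)^{-1}$ all preserve positivity, $W$ is positive. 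To close your gap you would either need to actually execute your induction, or, more economically, replace it with the paper's citation of the page~406 claim together with the preservation-of-positivity observation.
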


\begin{proof}

As stated on page 403 of \cite{BRS}, any computation $C$ of $S(M'_{\infty})$ can be represented in the form

$$C=C_1...C_N,$$

\noindent where each $C_i$ is a non-empty computation of one of the sub-machines:

$$S_4(\tau), S_9(\tau), R_4(\tau), R_{4,9}(\tau), R_9(\tau),P(\tau')$$\\

\noindent where $\tau$ is an $M'_{\infty}$ command of the form (\ref{type1}) and $\tau'$ is an $M'_{\infty}$ command of the form (\ref{type2}).  No two consecutive computations $C_i$ and $C_{i+1}$ come from the same submachine.\\

Since $C$ ends in $W$, which is an admissible word with all state letters standard, it follows from Lemma \ref{standard} that the machine that executes $C_N$ must be either $P(\tau')^{\pm1}$, $R_4(\tau)$ or $R_9(\tau)^{-1}$ for some $\tau$ of type \ref{type1} or $\tau'$ of type \ref{type2}.  It is stated on page 406 of \cite{BRS} that the first word in the computation $C_N$ is positive.  By definition, each of the command letters $P(\tau_i)^{\pm1}$, $R_4(\tau_i)$, and $R_9(\tau_i)^{-1}$ take positive admissible words to positive admissible words.  We conclude that $W$ is positive.  Since $C$ began with $W_0$, all words in $C$ are normal.  The result now follows from Lemma \ref{normal} and Corollary \ref{cor1}.

\end{proof}

The following lemma is stated in the proof of Proposition 4.1\cite{BRS} on pages 408-409.

\begin{lemma}\label{durp}

If $W$ is an acceptable admissible word of $S(M'_{\infty})$, then there is a computation $C''_1$ of $S(M'_{\infty})$ that takes $W$ to $\sigma(\textbf{c})$ for some acceptable configuration $\textbf{c}$ of $M'_{\infty}$.  The computation $C''_1$ is composed of computations of $S_4(\tau)$, $S_9(\tau)$, $R_4(\tau)$, $R_{4,9}(\tau)$, $R_9(\tau)$ for some command $\tau$ of $M'_{\infty}$.  The length of $C''_1$ does not exceed $O(|W|^2)$.

\end{lemma}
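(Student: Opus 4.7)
The plan is to exploit the block decomposition of computations in $S(M'_{\infty})$ recalled in the proof of Lemma \ref{standard2}. Since $W$ is acceptable, fix a computation $C = C_1 C_2 \cdots C_N$ of $S(M'_{\infty})$ beginning at $W$ and ending at $W_0$, where each $C_i$ is a maximal block coming from one of the submachines $S_4(\tau_i), S_9(\tau_i), R_4(\tau_i), R_{4,9}(\tau_i), R_9(\tau_i)$ (for a type-(\ref{type1}) command $\tau_i$) or $P(\tau'_i)$ (for a type-(\ref{type2}) command $\tau'_i$), and consecutive blocks come from distinct submachines. Let $m$ be the largest integer such that $C_1,\dots,C_m$ all belong to submachines of a single $\mathcal{R}_{\tau}$; if $C_1$ is already a $P$-block, set $m = 0$ and let $C''_1$ be the empty computation. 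Otherwise define $C''_1 := C_1 \cdots C_m$, and let $W'$ denote the admissible word at the end of $C''_1$ (with $W' = W$ when $C''_1$ is empty).

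Next I would show that $W' = \sigma(\textbf{c})$ for some acceptable configuration $\textbf{c}$. Either $m = N$, in which case $W' = W_0$ is itself $\sigma$ of the accept configuration, or a next block $C_{m+1}$ exists and uses either $P(\tau')^{\pm 1}$ or a command of $\mathcal{R}_{\tau''}$ with $\tau''\neq\tau$. In the latter case the state letters of $W'$ must all be standard, because the non-standard state letters of $S(M'_{\infty})$ carry a $\tau$-index identifying the $\mathcal{R}_{\tau}$-simulation currently in progress; Lemma \ref{standard} then forces the first command letter of $C_{m+1}$ to be one of $R_4(\tau''), R_9(\tau'')^{-1}, P(\tau')^{\pm 1}$. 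In every case a command letter satisfying the hypotheses of Lemma \ref{normal} or Corollary \ref{cor1} is applicable at $W'$. Normality of $W'$ is inherited from $W_0$, since normality is preserved by all $S(M'_{\infty})$-commands and $C$ runs between $W$ and $W_0$; positivity of the first word of any non-empty block is recorded on \cite[p.~406]{BRS}. Applying Lemma \ref{normal} or Corollary \ref{cor1} then yields $W' = \sigma(\textbf{c})$ for some configuration $\textbf{c}$, and the tail $C_{m+1}\cdots C_N$ witnesses a computation from $\sigma(\textbf{c})$ to the accept image $W_0$; by the correspondence between $\sigma$-image computations and $M'_{\infty}$-computations established in \cite{BRS}, this lifts to an accepting $M'_{\infty}$-computation starting at $\textbf{c}$.

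The main obstacle is the length bound $|C''_1| = O(|W|^2)$, for which I would follow the quantitative analysis on pages 408--409 of \cite{BRS}. During a single $\mathcal{R}_{\tau}$-simulation, the length of the current admissible word stays within a constant multiple of $|W|$, and each sub-block $S_4(\tau), S_9(\tau), R_4(\tau), R_{4,9}(\tau), R_9(\tau)$ is executed by sweeping a non-standard state letter across the current admissible word, contributing $O(|W|)$ steps per sweep. Bounding the number of such sweeps inside one $\mathcal{R}_{\tau}$-simulation by $O(|W|)$ (via the amortized traversal argument of \cite{BRS}, which counts how many times each non-standard state letter can re-cross the word before the simulation of $\tau$ completes) and multiplying by the $O(|W|)$ cost per sweep yields the claimed $O(|W|^2)$ bound. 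This is precisely the estimate carried out in \cite[pp.~408--409]{BRS}, and my proof would track that calculation directly.
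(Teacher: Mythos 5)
Your proposal is correct and takes essentially the same approach as the paper: the paper simply cites the proof of Proposition 4.1 in \cite{BRS} (pp.~408--409) as the source for this lemma, and your argument reconstructs that reasoning explicitly via the block decomposition, positivity of block-initial admissible words, Lemma \ref{normal} and Corollary \ref{cor1}, while deferring, exactly as the paper does, to the same pages of \cite{BRS} for the $O(|W|^2)$ length estimate.
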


The following lemma follows directly from the construction of $S(M'_{\infty})$ in \cite{BRS}.  Specifically, it follows from the fact that for all $k$-tape union machines, the lengths of the transition relators in the sub-machines $S_4(\tau)$, $S_9(\tau)$, $R_4(\tau)$, $R_{4,9}(\tau)$, $R_9(\tau)$, $P(\tau')$ are invariant under the choice of $\tau$ and $\tau'$.  This lemma can be verified from the summary of the rules of $S(M'_{\infty})$ given on pages 397-399 of \cite{BRS} and from the formal definitions in \cite{BRS} of each of the submachines mentioned in that summary.

\begin{lemma}\label{rellength}

There is a constant bound $b_k$ such that for any union machine $M_{\infty}$ the relators in the presentation $S(M'_{\infty})$ have length less than $b_k$.

\end{lemma}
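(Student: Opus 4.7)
My plan is to verify the uniformity claim by inspecting each of the submachine types that constitute $\mathcal{R}_\tau$ (or $\mathcal{R}_{\tau'}$) and checking that the shape of every relator it contributes depends only on $k$, not on the particular command $\tau$ (or $\tau'$) indexing it.

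First I would dispose of the auxiliary relators. By the definition of an $S$-machine, each auxiliary relator of $H_i$ is of the form $\rho_i a \rho_i^{-1} = a$ with $a\in\hat{A}$, so it has length exactly $4$ independently of everything. This gives a uniform bound for all auxiliary relators.

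Next I would handle the transition relators. Any transition relator has the form $\rho^{-1}\mathbf{u}\rho = a\mathbf{v}b$, where $\mathbf{u},\mathbf{v}$ are $(m,n)$-subwords of admissible words for some $m\le n$, and $a,b\in \hat{A}^{\pm 1}\cup\{\varepsilon\}$. Hence its length is at most $|\mathbf{u}|+|\mathbf{v}|+4$. The plan is then to open each of the submachines $S_4(\tau)$, $S_9(\tau)$, $R_4(\tau)$, $R_{4,9}(\tau)$, $R_9(\tau)$ (for $\tau$ of type \eqref{type1}) and $P(\tau')$ (for $\tau'$ of type \eqref{type2}) as they are formally defined on pages 374--396 of \cite{BRS}, and observe in each case the following invariance: the pair of indices $(m,n)$ and the number of tape-letter positions inside each $(m,n)$-subword are prescribed by the design of the submachine (they are tied to which blocks of the admissible word encode which halves of which tape), and are never influenced by the particular tape letters or state letters that occur in the specific command $\tau$. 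Because $M'_\infty$ has $2(k+1)$ tapes, each admissible word has at most $2(k+1)$ state letters and each $(m,n)$-subword used in a transition relator contains at most $2(k+1)$ state letters together with a bounded number of tape letters per block (inspection shows at most one or two). Consequently, $|\mathbf{u}|$ and $|\mathbf{v}|$ are each bounded by a constant depending only on $k$.

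Taking $b_k$ to be the maximum, over the finite list of submachine types, of the uniform upper bound produced above (plus $5$ to absorb the auxiliary relators and the $a, b, \rho^{\pm 1}$ contributions to each transition relator), we obtain a bound depending only on $k$ that majorizes every relator of $S(M'_\infty)$. The main obstacle is purely expository: one must actually trace through the explicit formulas for each of the six submachine types in \cite{BRS} and confirm that none of the lengths of the words $\mathbf{u},\mathbf{v}$ depend on $\tau$. Once this inspection is carried out, the uniform bound follows immediately, and as the authors note this can be read off directly from the summary on pages 397--399 combined with the detailed definitions.
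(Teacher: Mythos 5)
Your proposal is correct and takes essentially the same route as the paper: the paper's proof is a citation to the construction in \cite{BRS}, resting precisely on the observation you make, namely that the lengths of the transition relators appearing in $S_4(\tau)$, $S_9(\tau)$, $R_4(\tau)$, $R_{4,9}(\tau)$, $R_9(\tau)$, $P(\tau')$ depend only on $k$ and not on the particular command $\tau$ or $\tau'$. Your additional bookkeeping (the length-$4$ auxiliary relators, and the bound $|\mathbf{u}|+|\mathbf{v}|+4$ for transition relators with $|\mathbf{u}|,|\mathbf{v}|$ controlled by the $2(k+1)$-tape structure) is a reasonable fleshing-out of the same inspection the paper defers to \cite{BRS}.
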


\begin{lemma}\label{smachsol}

If the configuration problem is solvable for $M'_{\infty}$, then the configuration problem is solvable for $S(M'_{\infty})$.

\end{lemma}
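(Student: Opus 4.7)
The plan is to decide acceptability of an admissible word $W$ of $S(M'_{\infty})$ by a bounded-depth search that queries the configuration-problem oracle for $M'_{\infty}$ at appropriate nodes. I would first establish the characterization that $W$ is acceptable in $S(M'_{\infty})$ if and only if there exists a computation of length at most $c|W|^2$ (where $c$ is the constant hidden in the $O(|W|^2)$ bound of Lemma \ref{durp}) starting at $W$ and ending at $\sigma(\textbf{c})$ for some configuration $\textbf{c}$ of $M'_{\infty}$ that is acceptable in $M'_{\infty}$. The forward direction is exactly Lemma \ref{durp} applied to an accepting computation for $W$; the reverse direction follows from the simulation property built into $S(M'_{\infty})$: if $\textbf{c}$ is acceptable in $M'_{\infty}$, an accepting $M'_{\infty}$ computation from $\textbf{c}$ lifts through $\sigma$ to an $S(M'_{\infty})$ computation from $\sigma(\textbf{c})$ to $W_0$, which I can concatenate with the computation from $W$ to $\sigma(\textbf{c})$.

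Next I would verify that the characterization is effectively checkable given the oracle. By Lemma \ref{rellength}, every relator of $S(M'_{\infty})$ has length at most $b_k$, so each step of a computation changes word length by at most $b_k$ and every admissible word reached within $c|W|^2$ steps has length $O(|W|^2)$. At any such word $W'$, the set of $S(M'_{\infty})$-commands applicable to $W'$ is finite and effectively computable: if $W'$ contains a non-standard state letter, that letter carries an $M'_{\infty}$-command index $\tau$ and only commands from the submachine $\mathcal{R}_{\tau}$ can act on it, and these are enumerated from $\tau$ by Lemma \ref{comenu}; if instead every state letter of $W'$ is standard, then Lemma \ref{standard} restricts applicable commands to $R_4(\tau)$, $R_9(\tau)^{-1}$, or $P(\tau')^{\pm 1}$, and the identity and position of the standard state letters together with the bounded-length matching requirement on the corresponding transition relator pin down the finitely many candidate $\tau, \tau'$, whose transition relators can then be reconstructed via Lemma \ref{comenu} to confirm applicability.

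Putting these together, the decision procedure is to perform a breadth-first search of depth $c|W|^2$ from $W$, enumerating applicable commands at each node by the process above; for every admissible word $W'$ encountered, I would apply Lemma \ref{recover} to decide in linear time whether $W' = \sigma(\textbf{c})$ for some configuration $\textbf{c}$ of $M'_{\infty}$, and if so, query the configuration-problem oracle on $\textbf{c}$, accepting $W$ if and only if at least one such $\textbf{c}$ is acceptable. The main obstacle I expect is justifying the effective enumerability of the applicable commands at each $W'$, since one cannot blindly enumerate all commands of $S(M'_{\infty})$; the argument has to exploit the indexing of non-standard state letters by $M'_{\infty}$-command symbols, together with Lemma \ref{standard} for the fully-standard case and Lemma \ref{comenu} for recovering the relevant transition relators from each candidate $\tau$.
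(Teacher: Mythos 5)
Your high-level strategy (a bounded search from $W$ combined with oracle queries on configurations recovered via Lemma \ref{recover}) matches the paper's, and your characterization of acceptability via Lemma \ref{durp} and the simulation property is correct. But your search is organized differently from the paper's in a way that opens a gap you correctly flag as ``the main obstacle'' without closing it. The paper splits into three cases: if $W=\sigma(\textbf{c})$, query the oracle directly; if $W$ has all standard state letters but is not of the form $\sigma(\textbf{c})$, then Lemma \ref{standard2} (read in contrapositive together with acceptability) shows $W$ cannot be acceptable, so the algorithm simply reports ``no''; otherwise $W$ carries a non-standard state letter indexed by some $\tau$, and the paper enumerates only the computations of the single submachine $\mathcal{R}_{\tau}$ of length $O(|W|^2)$ starting at $W$. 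The point is that Lemma \ref{durp}'s computation $C''_1$ is already a computation of a single $\mathcal{R}_{\tau}$, and $\mathcal{R}_{\tau}$ has finitely many commands recoverable from $\tau$ via Lemma \ref{comenu}, so the restricted search is finite and complete with no extra argument.

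Your proposal instead performs a BFS over all $S(M'_{\infty})$-computations of bounded length, which forces you to enumerate applicable commands at every intermediate word, including those with all standard state letters. Your justification for that case --- that Lemma \ref{standard} plus ``the bounded-length matching requirement'' pins down finitely many candidate $\tau,\tau'$ --- is not supported by the lemmas you invoke: Lemma \ref{standard} only constrains the \emph{form} of commands that can apply, and Lemma \ref{comenu} takes $\tau$ as input rather than producing the finite set of matching $\tau$ from the word $W'$. Since $M'_{\infty}$ has countably infinitely many commands, the finiteness and effective enumerability of this candidate set is a nontrivial claim about the internals of the BRS construction, and nothing you cite establishes it. This is precisely the difficulty the paper's use of Lemma \ref{standard2} and the restriction to a single $\mathcal{R}_{\tau}$ are designed to avoid; adopting that case split would let you dispense with the all-standard intermediate case entirely.
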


\begin{proof}

Let $W$ be an accepted admissible word of $S(M'_{\infty})$.  If $W=\sigma(\textbf{c})$ for some configuration $\textbf{c}$ of $M'_{\infty}$, then by Lemma \ref{recover} we can recover $\textbf{c}$ from $W$.  By \cite[Propositoin 4.1]{BRS}, $\textbf{c}$ is accepted by $M'_{\infty}$ if and only if $\sigma(c)$ is accepted by $S(M'_{\infty})$.  Thus we can decide whether $W=\sigma(\textbf{c})$ is accepted by $S(M'_{\infty})$.\\

If $W$ is not equal to $\sigma(\textbf{c})$ for any configuration $c$ of $M'_{\infty}$, then by Lemma \ref{standard2} at least one state letter of $W$ is non-standard.  We choose a non-standard state letter of $W$ and look at its $\hat{\Theta}$ index $\tau$.  By Lemma \ref{comenu}, we can use $\tau$ to recover the finite set of transition relators of the sub-machine $\mathcal{R}_{\tau}$.  We use these to effectively construct all of the finitely many computations of the sub-machine $\mathcal{R}_{\tau}$ of length not exceeding $O(|W|^2)$ that begin with the admissible word $W$.  We then check if any of these computations end with an admissible word of the form $\sigma(\textbf{c})$.  If not, then by Lemma \ref{durp}, $W$ is not an accepted admissible word of $S(M'_{\infty})$.  If so, then we collect the finitely many configurations $\textbf{c}$ of $M'_{\infty}$ such that $\sigma(\textbf{c})$ is reachable from $W$ by applying such a computation.  The admissible word $W$ is accepted by $S(M'_{\infty})$ if and only if at least one of these configurations $\textbf{c}$ is an accepted configuration of $M'_{\infty}$.  Since the configuration problem is solvable for $M'_{\infty}$, the proof is complete.

\end{proof}

\begin{lemma}\label{smachce}

If $M'_{\infty}$ is c.e. then $S(M'_{\infty})$ is c.e.

\end{lemma}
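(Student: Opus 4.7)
The plan is to verify that both the generating set and the relator set of $S(M'_{\infty})$ are computably enumerable, given that $M'_{\infty}$ is c.e. Since being c.e. as a presentation reduces (under our conventions) to being c.e. as a generating set and as a relator set, the task is to provide enumeration algorithms for each, and both will proceed by dovetailing the enumerations of $\Gamma$, $Q$, $\Theta$ of $M'_{\infty}$ against effective procedures extracted from the construction in \cite{BRS}.

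For the generators, I would inspect the summary of $S(M'_{\infty})$ on pages 397--399 of \cite{BRS} together with the formal definitions of the submachines $S_4(\tau), S_9(\tau), R_4(\tau), R_{4,9}(\tau), R_9(\tau), P(\tau')$ on pages 374--396. One observes there that the tape letters $\hat{A}$, the standard state letters, and the few ``global'' auxiliary generators depend only on the input alphabet $A$, the tape alphabet $\Gamma$, and the tape number $k$ of $M'_{\infty}$, and are effectively produced from a c.e. enumeration of $\Gamma$. The non-standard state letters and the command letters in each $\mathcal{R}_{\tau}$ are all indexed by a positive command symbol $\tau$ of $M'_{\infty}$ and are effectively constructible from $\tau$ itself (each $\mathcal{R}_{\tau}$ contributes only finitely many new generators, and those generators are read directly off the index $\tau$). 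Dovetailing these uniform-in-$\tau$ constructions with the c.e.\ enumeration of $\Theta$ of $M'_{\infty}$ therefore enumerates the full generating set.

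For the relators, the auxiliary relators $\rho_i a \rho_i^{-1} = a$ are enumerable from the already-enumerated $\hat{A}$ and $\hat{\Theta}$ by standard dovetailing, while for the transition relators Lemma \ref{comenu} supplies the crucial effective procedure: given a command $\tau$ of form (\ref{type1}) or $\tau'$ of form (\ref{type2}) of $M'_{\infty}$, it outputs the finite set of transition relators of $\mathcal{R}_{\tau}$ or $\mathcal{R}_{\tau'}$. Running this procedure on each command produced by the enumeration of $\Theta$ of $M'_{\infty}$ enumerates all transition relators of $S(M'_{\infty})$. The only non-trivial aspect of the argument is a bookkeeping check -- namely, that every generator and every relator of $S(M'_{\infty})$ is indeed captured either by the finite base data, by the c.e.\ tape alphabet, or by some $\mathcal{R}_{\tau}$ uniformly in $\tau$. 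This is a direct reading of the construction in \cite{BRS}, after which the dovetailing procedures described above complete the proof.
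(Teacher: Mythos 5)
Your proposal is correct and follows essentially the same route as the paper: both arguments rest on Lemma~\ref{comenu} to enumerate the transition relators from the c.e.\ set of commands of $M'_{\infty}$, note that the tape letters are just the alphabet letters of $M'_{\infty}$ together with finitely many extra letters, and observe that the state/command letters and auxiliary relators are then effectively constructible from this data. The paper phrases the enumeration of state and command letters by observing that each appears in some transition relator, whereas you enumerate them directly from the indices of the submachines $\mathcal{R}_{\tau}$, but this is a cosmetic difference, not a different method.
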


\begin{proof}

By Lemma \ref{comenu}, the set of transition relators of $S(M'_{\infty})$ can be effectively enumerated from the set of commands of $M'_{\infty}$.  Every state and command letter of $S(M'_{\infty})$ appears in a transition relator of $S(M'_{\infty})$.  The set of tape letters of $S(M'_{\infty})$ consists of the alphabet letters of $M'_{\infty}$ plus finitely many additional letters, as described on page 396 of \cite{BRS}.  The set of auxiliary relators of $S(M'_{\infty})$ can be effectively constructed from the sets of command and alphabet letters of $S(M'_{\infty})$.  Thus if we can computably enumerate $M'_{\infty}$, we can computably enumerate $S(M'_{\infty})$.\\

\end{proof}

\section{Diagrams}

In this section we define the terms and notation that we will use to discuss van Kampen diagrams.  Let $P=\langle X \parallel R \rangle$ be a group presentation.\\

A {\em van Kampen diagram} (or often just a diagram) over the presentation $P$ is a planar, finite, connected and simply connected 2-complex $\Delta$.  The edges of $\Delta$ are oriented, and each oriented edge of $\Delta$ is labeled by an element of $X^{\pm 1}$.  The label of an oriented edge $e$ is denoted $\text{Lab}(e)$.  If $\text{Lab}(e)=x$, then $\text{Lab}(e^{-1})=x^{-1}$.  If $p=e_1\dots e_k$ is a path of edges in $\Delta$ then the label of $p$ is $\text{Lab}(p)=\text{Lab}(e_1)\dots \text{Lab}(e_k)$.  We denote the boundary path of $\Delta$ by $\partial\Delta$. For a 2-cell $\pi$ of $\Delta$, we denote the boundary path of $\pi$ by $\partial\pi$ (we will often write ``boundary" for boundary path).  We will always assume that boundary paths of diagrams and of 2-cells are oriented in the clockwise direction.  For every 2-cell $\pi$ of $\Delta$, $\text{Lab}(\partial\pi)$ is an element of the relator set $R$.  The {\em contour} of a cell or diagram is the union of its boundary and the inverse of its boundary.  An {\em undirected edge} of $\Delta$ is the union of an edge of $\Delta$ with its inverse.  An edge $e$ of $\Delta$ is a {\em boundary edge} of $\Delta$ if $e$ is contained in $\partial \Delta$.  If an edge $e$ of $\Delta$ is not a boundary edge then $e$ is an {\em interior edge}.  We call the initial and final vertices of an edge $e$ the {\em endpoints} of $e$. \\

A pair of 2-cells $\pi_1, \pi_2$ in a diagram $\Delta$ is called a {\em reducible pair} if there is an edge $e$ in $\Delta$ such that $\partial \pi_1 = ve$, $\partial \pi_2 = e^{-1}v'$ and $\text{Lab}(v)=\text{Lab}(v')^{-1}$.  A diagram is {\em reduced} if it contains no reducible pairs.\\

The length of a path $p$, denoted $|p|$, is the number of edges $p$ contains.  The {\em area} of a diagram $\Delta$ is the number of 2-cells $\Delta$ contains.  If we say that $\Delta$ is a {\em minimal area} diagram over some presentation $P$, we mean that $\Delta$ is the minimal area $P$ diagram with boundary label $\text{Lab}(\partial\Delta)$.\\

Let \textbf{S} be a subset of the generating set of a presentation $P$. An \textbf{S}-band $\mathcal{B}$ over $P$ is a sequence of 2-cells $\pi_1,...,\pi_n$ in a van Kampen diagram such that:

 \begin{itemize}

 \item
For $i=1,\dots,(n-1)$, the boundaries $\partial\pi_i$ and $(\partial\pi_{i+1})^{-1}$ share a common edge labeled by a letter from \textbf{S}.\\

\item
For $i=1,...,n$, the boundary $\partial\pi_i$ contains exactly two \textbf{S}-edges (i.e. edges labeled by a letter from \textbf{S}).\\
\end{itemize}

The figure below illustrates the definition of an $\textbf{S}$ band. In this figure, the edges $e,e_1,...,e_{n-1},f$ are \textbf{S}-edges, and the line $\ell(\pi_i,e_i)$ connects a fixed point in the interior of $\pi_i$ cells with a fixed point in the interior of $e_i$.\\

\begin{center}

\unitlength 1mm 
\linethickness{0.4pt}
\ifx\plotpoint\undefined\newsavebox{\plotpoint}\fi 
\begin{picture}(147.25,59)(0,0)
\put(12.75,23.75){\line(1,0){66.25}}
\put(12.75,51.25){\line(1,0){66.25}}
\put(12.75,36.75){\line(1,0){66.25}}
\put(12.5,23.5){\vector(0,1){27.75}}
\put(37.25,23.5){\vector(0,1){27.75}}
\put(61.5,23.5){\vector(0,1){27.75}}
\put(100.25,51.5){\line(1,0){36}}
\put(100.25,37){\line(1,0){36}}
\put(100.25,23.5){\line(1,0){36}}
\put(136.25,23.5){\vector(0,1){28.25}}
\put(113.25,23){\vector(0,1){28.25}}
\put(13,36.75){\circle*{2.062}}
\put(37.25,36.25){\circle*{2.062}}
\put(62,36.5){\circle*{2.062}}
\put(113.5,37.25){\circle*{2.062}}
\put(136.75,36.5){\circle*{2.062}}
\put(25.25,36.5){\circle*{1.118}}
\put(50.25,36.75){\circle*{1.118}}
\put(124,36.75){\circle*{1.118}}
\put(17.75,35.5){\vector(1,2){.07}}\multiput(9.75,16.5)(.033613445,.079831933){238}{\line(0,1){.079831933}}
\put(31,34.75){\vector(1,4){.07}}\multiput(27,16.25)(.033613445,.155462185){119}{\line(0,1){.155462185}}
\put(43.5,35){\vector(0,1){.07}}\multiput(45.5,16)(-.03333333,.31666667){60}{\line(0,1){.31666667}}
\put(106.75,35){\vector(1,2){.07}}\multiput(98.75,16)(.033613445,.079831933){238}{\line(0,1){.079831933}}
\put(119,35.25){\vector(0,1){.07}}\multiput(120,16.5)(-.0333333,.625){30}{\line(0,1){.625}}
\put(129.75,35.25){\vector(-1,2){.07}}\multiput(139,16.25)(-.0336363636,.0690909091){275}{\line(0,1){.0690909091}}
\put(65.5,16.25){\vector(-1,2){9.25}}
\put(85.25,51.5){\makebox(0,0)[cc]{$q_2 \,.\,.\,.$}}
\put(82.5,36.75){\makebox(0,0)[cc]{$.~~.~~.$}}
\put(87.75,23.5){\makebox(0,0)[cc]{$q_1~~.~~.~~.$}}
\put(7.25,14.75){\makebox(0,0)[cc]{$\ell(\pi_1,e)$}}
\put(26.75,15){\makebox(0,0)[cc]{$\ell(\pi_1,e_1)$}}
\put(44.25,15.5){\makebox(0,0)[cc]{$\ell(\pi_2,e_1)$}}
\put(64.5,14.75){\makebox(0,0)[cc]{$\ell(\pi_2,e_2)$}}
\put(97.5,14.5){\makebox(0,0)[cc]{$\ell(\pi_{n-1},e_{n-1})$}}
\put(120.25,14.25){\makebox(0,0)[cc]{$\ell(\pi_n,e_{n-1})$}}
\put(139.25,15.25){\makebox(0,0)[cc]{$\ell(\pi_n,f)$}}
\put(140,38.25){\makebox(0,0)[cc]{$f$}}
\put(9.25,37.75){\makebox(0,0)[cc]{$e$}}
\put(23.25,44.75){\makebox(0,0)[cc]{$\pi_1$}}
\put(47,45.25){\makebox(0,0)[cc]{$\pi_2$}}
\put(123,44){\makebox(0,0)[cc]{$\pi_n$}}
\put(74.25,59){\vector(1,0){12.75}}
\put(76.25,10.75){\vector(1,0){10.5}}
\put(147.25,29){\vector(0,1){9.75}}
\put(4,30.5){\vector(0,1){10.75}}
\end{picture}

\end{center}

The line formed by the segments $\ell(\pi_i,e_i),\ell(\pi_i,e_{i-1})$ connecting points inside neighboring cells is called the \textit{median} of the band $\mathcal{B}$. The \textbf{S}-edges $e$ and $f$ are called the \textit{start} and \textit{end} edges of the band.  If $\mathcal{B}=(\pi_1,...,\pi_n)$ is an \textbf{S}-band then $(\pi_n,\pi_{n-1},...,\pi_1)$ is also an \textbf{S}-band.  This band is called the {\em inverse} of $\mathcal{B}$ and is denoted $\mathcal{B}^{-1}$.  The start edge of $\mathcal{B}$ is the end edge of $\mathcal{B}^{-1}$, and the end edge of $\mathcal{B}$ is the start edge of $\mathcal{B}^{-1}$.\\

The clockwise boundary of the diagram formed by the cells $\pi_1,...,\pi_n$ of $\mathcal{B}$ has the form $e q_2 f^{-1} q_1^{-1}$. We call $q_1$ the \textit{bottom} of $\mathcal{B}$ and $q_2$ the \textit{top} of $\mathcal{B}$.  We denote these paths by \textbf{bot}$(\mathcal{B})$ and \textbf{top}$(\mathcal{B})$, respectively.\\

We say that two bands \textit{cross} if their medians cross. We say that a band is an \textit{annulus} if its median is a closed curve (i.e. if $e=f$).\\

We now define a type of surgery on diagrams called a {\em folding surgery}.  Suppose that $\Delta$ is a diagram and $p=e_1e_2^{-1}$ is a path in $\Delta$.  Suppose also that both $e_1$ and $e_2$ have label $x$, so the label of $p$ is $xx^{-1}$. Suppose that $v$ is the final vertex of $e_1$ and $e_2$ and that $e_1$ and $e_2$ do not share the same initial vertex. In order to perform a folding surgery on $\Delta$ at $p$, we first create a hole in $\Delta$ by cutting along $p$.  Creating this hole turns  $v$ into two new vertices, $v_1$ and $v_2$.  We then close the hole by folding edges together in such a way that $v_1$ and $v_2$ are not identified.\\

\section{The Group Presentation $P(M_{\infty})$}

The presentation $P(M_{\infty})$ from Theorem \ref{main} is constructed from the presentation $S(M'_{\infty})$ in exactly the same way that the presentation $P(M)$ mentioned in Theorem \ref{bigdeal} is constructed from $M$ in \cite{BRS}.  The generating set of $P(M_{\infty})$ includes all generators of $S(M'_{\infty})$ as well as the new generators $\{\kappa_i|i=1,...,2N\}$ for some sufficiently large $N$. We call these new generators $\kappa$-letters.  As in \cite{BRS} and \cite{BORS}, we use the notation $\bar{Y}=\hat{A}\cup \{\kappa_i|i=1,...,2N\}$\\

There are three types of relators of $P(M_{\infty})$: transition relators, auxiliary relators, and the hub relator.  The transition relators of $P(M_{\infty})$ are exactly the transition relators of $S(M'_{\infty})$.  The auxiliary relators of $S(M'_{\infty})$ consist of the auxiliary relators of $S(M'_{\infty})$ as well as the relators $\{\rho \kappa_i = \kappa_i \rho|i=1,...,2N \}$ for each command letter $\rho$ of $S(M'_{\infty})$.  If the boundary label of a 2-cell of a $P(M_{\infty})$ diagram is a transition relator, then we say that 2-cell is a {\em transition 2-cell}.  If the boundary label of a 2-cell of a $P(M_{\infty})$ diagram is an auxiliary relator, then we say that 2-cell is an {\em auxiliary 2-cell}.\\

For an admissible word $W$ of $S(M'_{\infty})$, let $K(W)$ denote the following word:

$$(W^{-1} \kappa_1 W \kappa_2 W^{-1} \kappa_3 W \kappa_4...W^{-1} \kappa_{2N-1} W \kappa_{2N})(\kappa_{2N} W^{-1} \kappa_{2N-1} W...\kappa_2 W^{-1} \kappa_1 W)^{-1}.$$\\

The hub relator is $K(W_0)=1$, where $W_0$ is the accept word of $S(M_{\infty})$.  These generators and relators constitute the presentation $P(M_{\infty})$.\\

For an admissible word $W$ of $S(M'_{\infty})$, the if the word $K(W)$ is trivial in $P(M'_{\infty})$ then $K(W)$ is called a {\em disc label} of $P(M'_{\infty})$. For every admissible word $W$ of $S(M'_{\infty})$, the word $K(W)$ is trivial in $P(M_{\infty})$ if and only if $W$ is accepted by $S(M'_{\infty})$. \\

We can now define the map $\mathcal{K}$, which has the same interpretation in both Theorem \ref{bigdeal} and Theorem \ref{smalldeal}.  If $\textbf{u}\in A^*$, and $\textbf{c}_{\textbf{u}}$ is the input configuration for $\textbf{u}$ in $M'_{\infty}$, then

\begin{equation}\label{kdef}
\mathcal{K}(\textbf{u}):=K(\sigma(\textbf{c}_{\textbf{u}})).
\end{equation}

\begin{lemma}\label{presce}

If $S(M'_{\infty})$ is c.e. then $P(M_{\infty})$ is c.e..

\end{lemma}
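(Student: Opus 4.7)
The plan is to verify separately that the generating set $X_P$ and the relator set $R_P$ of $P(M_\infty)$ are c.e., using the description given earlier in this section of $P(M_\infty)$ as built from $S(M'_\infty)$ by adjoining finitely many new generators, a family of commutation relators, and a single hub relator.

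For the generating set, $X_P$ is the union of the generating set of $S(M'_\infty)$ (c.e. by hypothesis) with the finite set $\{\kappa_1,\ldots,\kappa_{2N}\}$, so $X_P$ is c.e. For the relator set, I would split $R_P$ into three classes. The transition relators of $P(M_\infty)$ coincide with those of $S(M'_\infty)$, hence are c.e. The auxiliary relators consist of the auxiliary relators of $S(M'_\infty)$ (c.e. by hypothesis) together with the commutation relators of the form $\rho\kappa_i\rho^{-1}\kappa_i^{-1}$ for each command letter $\rho$ of $S(M'_\infty)$ and each $i\in\{1,\ldots,2N\}$. To enumerate these commutation relators, I need a c.e. listing of the command letters of $S(M'_\infty)$; since every command letter appears in some transition relator, I can obtain such a listing by reading off the stable letters appearing in the c.e. listing of transition relators, in the same spirit as the argument used in the proof of Lemma \ref{smachce}. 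The hub relator $K(W_0)$ is a single explicit word (since $W_0$ is a fixed admissible word of $S(M'_\infty)$), contributing a computable singleton.

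Finally, the closure of a c.e. set of cyclically reduced words under inversion and cyclic conjugation is c.e., and a finite union of c.e. sets is c.e. Combining these observations shows that $R_P$ is c.e., and hence $P(M_\infty)$ is c.e. The only mild point to be careful about is the effective identification of the command letters of $S(M'_\infty)$; this is essentially bookkeeping, handled by the enumeration-through-transition-relators device above, so I do not anticipate any genuine obstacle in carrying the proof out.
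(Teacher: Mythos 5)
Your proof is correct and follows essentially the same route as the paper's: observing that $P(M_\infty)$ is obtained from $S(M'_\infty)$ by adjoining finitely many $\kappa$-generators, the hub relator, and the $\kappa$-commutation relators indexed by command letters, all of which are effectively enumerable given the c.e. structure of $S(M'_\infty)$. The paper's own proof is terser (two sentences), but the decomposition and the reason for concluding c.e.-ness are the same; the only addition you make is spelling out the enumeration-through-transition-relators device for listing command letters, which is implicit in the paper.
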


\begin{proof}

We construct $P(M_{\infty})$ from $S(M'_{\infty})$ by adding the $\kappa$ letters to the generating set of $S(M'_{\infty})$, and adding the hub relation and the relators $\{\rho \kappa_i = \kappa_i \rho|i=1,...,2N \}$ for each command letter $\rho$ of $S(M'_{\infty})$ to the relator set of $S(M'_{\infty})$.  Thus if $S(M'_{\infty})$ is c.e., so is $P(M'_{\infty})$.

\end{proof}

\section{The Word Problem For $P(M_{\infty})$}

Suppose that $M$ is a Turing machine that computes the word problem for a finitely generated group $G$.  In \cite{BORS}, the authors construct a finite presentation $H(M)$ of a group such that there is an embedding $G\to H(M)$.  The group presented by $H(M)$ is denoted in \cite{BORS} by $H_N(S)$ or often just $H$, while the group presented by $P(M)$ is denoted by $G_N(S)$.  The presentation $H(M)$ is obtained from $P(M)$ via a sequence of 3 HNN extensions, each of which adjoins finitely many stable letters to $P(M)$.  The set of generators of $H(M)$ contains the generators of $P(M)$ and the set of relators of $H(M)$ contains the set of relators of $P(M)$.  The structure of $H(M)$ diagrams is thoroughly analyzed in \cite{BORS}.  The proofs of the lemmas used in this analysis do not depend at all on the Turing machine $M$.  Instead, they rely on geometric properties of $H(M)$ that are invariant under the choice of $M$.\\

It is possible to construct $H(M_{\infty})$ from $P(M_{\infty})$ in in the exact same way that $H(M)$ is constructed from $P(M)$ in \cite{BORS}.  The results proven about $H(M)$ in \cite{BORS} hold for $H(M_{\infty})$ as well.  Furthermore, the proofs of these results for $H(M_{\infty})$ are identical to the corresponding proofs given in \cite{BORS}.  We will cite these results from \cite{BORS} as though they were statements about $H(M_{\infty})$ instead of $H(M)$.  \\

In \cite{BORS}, the authors analyze $H(M_{\infty})$ by constructing the {\em disc-based presentation for H(M)}.  This presentation is constructed by adding the disc labels of $P(M_{\infty})$ to the relator set of the presentation of $H(M_{\infty})$.  We denote the disc based presentation of $H(M_{\infty})$ by $H_D(M_{\infty})$.  We define the {\em disc-based presentation $P_D(M_{\infty})$} to be the presentation obtained by adding the disc labels of $P(M_{\infty})$ to the relator set of $P(M_{\infty})$.  If the boundary label of a 2-cell in a disc based presentation is a disc label, then we call that 2-cell a {\em disc}.\\

On page 486 of \cite{BORS}, the authors assign each diagram $\Delta$ over $H_D(M_{\infty})$ a 4-tuple of non-negative integers which they call the {\em type} of $\Delta$.  If $(n_1,n_2,n_3,n_4)$ is the type of $\Delta$, then each $n_i$ is the number of a certain kind of 2-cell in $\Delta$. For example, $n_1$ is the number of discs in $\Delta$.  They then order the types lexicographically.  A diagram $\Delta$ over $H_D(M_{\infty})$ is said to be {\em minimal} if the type of $\Delta$ is minimal among the types of all diagrams with the same boundary label as $\Delta$.  For our purposes, further details about the formal definitions in \cite{BORS} of the type of an $H_D(M_{\infty})$ diagram are unimportant.  We define a $P_D(M_{\infty})$ diagram $\Delta$ to be minimal if $\Delta$ is a minimal $H_D(M_{\infty})$ diagram.     \\

\begin{lemma}\label{useful}

Let $\Delta$ be a minimal diagram over $H_D(M_{\infty})$ such that $\partial\Delta$ is a word in the generators of $P(M_{\infty})$; then $\Delta$ is a diagram over $P_D(M_{\infty})$.

\end{lemma}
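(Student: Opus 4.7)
The plan is to show that in the minimal $H_D(M_{\infty})$ diagram $\Delta$, no 2-cell has a boundary label containing one of the stable letters introduced by the three HNN extensions used in \cite{BORS} to build $H(M_{\infty})$ from $P(M_{\infty})$. Let $\textbf{S}$ denote the finite collection of all these stable letters. By construction, the 2-cells of $H_D(M_{\infty})$ whose boundary labels are not relators (or disc labels) of $P_D(M_{\infty})$ are precisely those whose boundary labels contain at least one letter of $\textbf{S}$. Because each new relator has HNN form, any letter of $\textbf{S}$ occurs in such a boundary label exactly twice and with opposite orientations, so the $\textbf{S}$-band machinery from the diagrams section applies.

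Since $\partial\Delta$ is by hypothesis a word in the generators of $P(M_{\infty})$, no boundary edge of $\Delta$ is labeled by an $\textbf{S}$-letter. Consequently any $\textbf{S}$-band in $\Delta$ must have both its start and end edges lying in the interior of $\Delta$, which forces each $\textbf{S}$-band to be an annulus. The crux of the proof is then to show that a minimal $H_D(M_{\infty})$ diagram contains no $\textbf{S}$-annuli. The intended argument peels off the three HNN extensions one layer at a time, in the reverse order in which they are adjoined: choose an innermost $\textbf{S}$-annulus $\mathcal{B}$ for a stable letter belonging to the outermost remaining extension, look at the subdiagram $\Delta'$ that $\mathcal{B}$ encloses, and observe that the label of $\partial\Delta'$ lies in the corresponding associated edge subgroup of that HNN extension. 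The standard HNN-band surgery then replaces $\mathcal{B}$ and $\Delta'$ with a diagram strictly smaller in the lexicographic type ordering on $4$-tuples defined on page 486 of \cite{BORS}, contradicting the minimality of $\Delta$.

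The main obstacle is verifying that this surgery really does decrease the $4$-tuple type at each of the three layers, and that the argument composes cleanly as one proceeds from the outermost HNN extension to the innermost. This should follow from direct appeal to the band-reduction and minimal-diagram structure results already established in \cite{BORS}, since the proofs of those results are noted to depend only on the geometric structure of $H$ and not on any particular feature of the underlying Turing machine. Once no $\textbf{S}$-edges appear in $\Delta$, every 2-cell of $\Delta$ has a boundary label that is a word in the generators of $P(M_{\infty})$, and hence that label is either a relator of $P(M_{\infty})$ or a disc label of $P(M_{\infty})$. Therefore $\Delta$ is a diagram over $P_D(M_{\infty})$, as required.
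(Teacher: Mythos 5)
Your sketch correctly describes the band-surgery mechanism that underlies \cite[Lemma 4.1]{BORS}; the paper's own proof consists of nothing more than that citation. Since you ultimately defer the substantive step (verifying that removing an innermost $\textbf{S}$-annulus strictly decreases the lexicographic type) back to the structure results of \cite{BORS}, this is essentially the same approach as the paper.
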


\begin{proof}

This is \cite[Lemma 4.1]{BORS}.

\end{proof}

\begin{corollary}\label{usefulcor}

For every trivial word $w$ in the generators of $P(M_{\infty})$ there is a minimal $P_D(M_{\infty})$ diagram with boundary label $w$.

\end{corollary}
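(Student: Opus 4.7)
The plan is to derive the corollary directly from Lemma \ref{useful} by lifting a minimal diagram from $H_D(M_\infty)$ back down to $P_D(M_\infty)$. The key observation is that the generating set and relator set of $P(M_\infty)$ are contained in those of $H(M_\infty)$, since $H(M_\infty)$ is obtained from $P(M_\infty)$ via a sequence of HNN extensions. Consequently, any word that is trivial in $P(M_\infty)$ is also trivial in $H(M_\infty)$, and in the larger presentation $H_D(M_\infty)$ as well, since passing to the disc-based presentation only adds relators (the disc labels of $P(M_\infty)$) and does not change the group.

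First I would let $w$ be any trivial word in the generators of $P(M_\infty)$. By the preceding remark, $w$ is trivial in $H_D(M_\infty)$, so at least one van Kampen diagram over $H_D(M_\infty)$ with boundary label $w$ exists. Since the collection of types of diagrams with a given boundary label is a nonempty subset of $\mathbb{N}^4$ and the lexicographic order on $\mathbb{N}^4$ is a well-order, there is a minimal $H_D(M_\infty)$ diagram $\Delta$ with boundary label $w$. By hypothesis, $\partial\Delta$ is labeled by a word in the generators of $P(M_\infty)$, so Lemma \ref{useful} applies and yields that $\Delta$ is in fact a diagram over $P_D(M_\infty)$.

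Finally, by the definition given just before Lemma \ref{useful}, a $P_D(M_\infty)$ diagram is \emph{minimal} precisely when it is a minimal $H_D(M_\infty)$ diagram. Since $\Delta$ was chosen to be minimal over $H_D(M_\infty)$, it is a minimal $P_D(M_\infty)$ diagram with boundary label $w$, completing the proof.

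There is no substantive obstacle here; the corollary is essentially a bookkeeping consequence of Lemma \ref{useful} together with the inclusion of presentations $P(M_\infty) \hookrightarrow H(M_\infty)$ and the well-ordering used to define minimality. The only point to verify carefully is that triviality transfers correctly from $P(M_\infty)$ to $H_D(M_\infty)$, which is immediate because moving from $P(M_\infty)$ to $H(M_\infty)$ only adds stable letters and HNN relations, and moving from $H(M_\infty)$ to $H_D(M_\infty)$ only adjoins words that are already trivial in $H(M_\infty)$.
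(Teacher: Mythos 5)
Your proof is correct and follows essentially the same route the paper intends: the corollary is stated without its own proof precisely because it is the bookkeeping consequence of Lemma \ref{useful} together with the definition of minimality for $P_D(M_\infty)$ diagrams that you spell out. Your explicit remarks about the existence of a minimal $H_D(M_\infty)$ diagram (via well-ordering of types) and the transfer of triviality from $P(M_\infty)$ to $H_D(M_\infty)$ are exactly the details the paper leaves implicit.
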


\begin{lemma}\label{noThetaAn}

Minimal diagrams over $P_D(M_{\infty})$ contain no $\hat{\Theta}$ annuli.

\end{lemma}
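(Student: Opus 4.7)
The plan is to argue by contradiction, closely following the $\hat{\Theta}$-annulus elimination argument from \cite{BORS}. Suppose $\Delta$ is a minimal $P_D(M_{\infty})$ diagram containing at least one $\hat{\Theta}$-annulus, and pick one, call it $\mathcal{B}$, whose interior subdiagram $\Delta_0$ is innermost in the sense that $\Delta_0$ contains no further $\hat{\Theta}$-annulus. Every cell of a $\hat{\Theta}$-band carries exactly two $\hat{\Theta}$-edges on its boundary, so its label must be a relator involving $\hat{\Theta}$-letters; among the four relator types of $P_D(M_{\infty})$ only the transition relators of $S(M'_{\infty})$ and the auxiliary relators of $P(M_{\infty})$ qualify, since hub labels and disc labels have the form $K(W)$ for an admissible word $W$ and hence contain no $\hat{\Theta}$-letters at all. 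The shared edges along $\mathcal{B}$ then force every cell of $\mathcal{B}$ to involve a single fixed command letter $\rho \in \hat{\Theta}$.

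Next I would show that $\Delta_0$ contains no $\hat{\Theta}$-cells whatsoever. The inner boundary of $\mathcal{B}$ is a closed subpath of $\partial \Delta_0$ whose label is \textbf{top}$(\mathcal{B})$, a word in $\hat{A} \cup \hat{Q}$ and the $\kappa$-letters with no $\hat{\Theta}$-letter. Any $\hat{\Theta}$-cell in $\Delta_0$ would belong to a maximal $\hat{\Theta}$-band that must either terminate on $\partial \Delta_0$, which is impossible because $\partial \Delta_0$ carries no $\hat{\Theta}$-edges, or itself be a $\hat{\Theta}$-annulus, which is impossible by the innermost choice of $\mathcal{B}$. Consequently every 2-cell of $\Delta_0$ is either a disc or the hub.

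The final step is the standard HNN-extension surgery on $\rho$. By Lemma \ref{hnn}, $P(M_{\infty})$ is an HNN extension in $\rho$ whose associated subgroups are generated from the transition relators of $\mathcal{R}_{\rho}$ together with the hardware generators and $\kappa$-letters that commute with $\rho$ via the auxiliary relators. Reading around the median of $\mathcal{B}$ expresses \textbf{top}$(\mathcal{B})$ and \textbf{bot}$(\mathcal{B})$ as corresponding elements of the two associated subgroups, and $\Delta_0$ realizes \textbf{top}$(\mathcal{B})$ as a consequence of disc and hub relators alone. Applying the isomorphism of the associated subgroups, one replaces $\mathcal{B} \cup \Delta_0$ by a diagram built from discs and hub cells alone, with the same outer boundary \textbf{bot}$(\mathcal{B})$, exactly as in the corresponding step of \cite{BORS}. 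This surgery strictly decreases the $\hat{\Theta}$-cell coordinate of the type 4-tuple assigned on page 486 of \cite{BORS} without increasing any higher-priority coordinate, contradicting the minimality of $\Delta$.

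The main obstacle is verifying that the disc-and-hub reassembly behaves correctly with respect to the lexicographic ordering of type 4-tuples: one must check that the counts of discs and of the other non-$\hat{\Theta}$ cell types do not increase when $\mathcal{B} \cup \Delta_0$ is replaced. This verification is purely local and combinatorial and makes no use of the finiteness of $\hat{\Theta}$ or of the relator set, so the argument from \cite{BORS} transfers verbatim to our countably-generated setting once the first two paragraphs have exhibited $\Delta_0$ as a disc-and-hub subdiagram.
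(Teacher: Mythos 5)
The paper's own proof of this lemma is a one-line citation: ``This is \cite[Lemma 4.25]{BORS}.'' The justification for importing it is given in the section preamble, where it is explained that the \cite{BORS} analysis of $H(M)$-diagrams is independent of the Turing machine and hence transfers verbatim to $H(M_\infty)$, and that a minimal $P_D(M_\infty)$ diagram is, by the paper's definition, a minimal $H_D(M_\infty)$ diagram. You have instead attempted to reconstruct the \cite{BORS} argument from scratch, which is a genuinely different exposition even if it is pointed at the same underlying facts.

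Your reconstruction gets the first two stages right: a $\hat{\Theta}$-annulus must consist of transition and auxiliary cells (disc labels $K(W)$ contain no $\hat{\Theta}$-letters), all of them carrying the same command letter $\rho$, and the innermost choice together with the fact that $\textbf{top}(\mathcal{B})$ has no $\hat{\Theta}$-edges forces the interior $\Delta_0$ to consist entirely of disc (and hub) cells. The gap is in the third stage. You invoke Lemma \ref{hnn}, but that lemma concerns the $S$-machine $\mathcal{N}$, which is an HNN extension of its hardware; the presentation $P(M_\infty)$ is \emph{not} an HNN extension because of the hub relator, and $P_D(M_\infty)$ is even further from one because of the disc relators. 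The claim that ``one replaces $\mathcal{B}\cup\Delta_0$ by a diagram built from discs and hub cells alone, with the same outer boundary $\textbf{bot}(\mathcal{B})$'' is precisely the content of the \cite{BORS} argument and is not a generic HNN surgery: one has to know that the $\rho$-conjugate of a product of conjugates of disc labels is again such a product with the same disc count, which uses the symmetry of $M'_\infty$ (so that accepted admissible words go to accepted admissible words under $S$-machine rules) and a careful accounting against the lexicographic type. You flag this yourself as ``the main obstacle'' and then assert it transfers, but at that point you have effectively reduced to the same citation the paper makes, just less economically. It would be cleaner either to follow the paper and cite \cite[Lemma 4.25]{BORS} outright, having noted that nothing in that proof uses finiteness, or to actually carry out the surgery-and-type bookkeeping rather than gesture at it.
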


\begin{proof}

This is \cite[Lemma 4.25]{BORS}.

\end{proof}

\begin{lemma}\label{kappahub}

Suppose $\Delta$ is a minimal diagram over the disc based presentation $P_D(M_{\infty})$.  If $\Delta$ contains at least one disc then there exists a disc $\Pi$ in $\Delta$ with $4N-6$ consecutive $\kappa$-bands $\mathcal{B}_1,\dots, \mathcal{B}_{4N-6}$ starting on $\partial\Pi$ and ending on $\partial\Delta$. For every such disc, let $\Phi_{\Delta}(\Pi)$ be the subdiagram of $\Delta$ bounded by $\textbf{top}(\mathcal{B}_1), \textbf{bot}(\mathcal{B}_{4N-6})$, $\partial\Delta$ and $\partial\Pi$, which contains $\mathcal{B}_1, \mathcal{B}_{N-6}$ and does not contain $\Pi$ (there is only one subdiagram in $\Delta$ satisfying these conditions). Then there exists a disc $\Pi$ such that $\Phi_{\Delta}(\Pi)$ does not contain discs.

\end{lemma}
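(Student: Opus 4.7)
The plan is to exploit the planar combinatorics of $\kappa$-bands in $\Delta$ together with the minimality hypothesis. First, by a surgery argument in the style of Lemma~\ref{noThetaAn}, I would show that a minimal $P_D(M_\infty)$-diagram contains no $\kappa$-annulus: such an annulus could be excised and its interior refilled so as to strictly lower the type of $\Delta$. The only 2-cells of $P_D(M_\infty)$ whose boundary involves $\kappa$-letters are the discs and the commutator cells $\rho\kappa_i\rho^{-1}\kappa_i^{-1}$; consequently, every $\kappa$-band of $\Delta$ has two distinct endpoints, each lying on $\partial\Delta$ or on the boundary of some disc.

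Next, I would encode the data as a planar multigraph $G$ embedded in the $2$-sphere obtained from $\Delta$ by collapsing $\partial\Delta$ to a single point $v_\infty$. The vertices of $G$ are the discs of $\Delta$ together with $v_\infty$, and each edge is drawn along the median of one $\kappa$-band. Each disc-vertex $v_\Pi$ has degree exactly $4N$, and the cyclic order of edges around $v_\Pi$ agrees with the cyclic order of the $4N$ $\kappa$-edges along $\partial\Pi$. The faces of $G$ correspond to the maximal connected sub-regions of $\Delta$ bounded by $\kappa$-band medians, $\partial\Delta$, and disc boundaries but containing no discs in their interiors.

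To locate $\Pi$, I would pick an extremal disc-vertex in $G$: one incident to a face $F$ of $G$ that also has $v_\infty$ on its boundary and along which $v_\Pi$ contributes a long consecutive arc of edges to $v_\infty$. Equivalently, I want the disc-disc $\kappa$-bands issuing from $\Pi$ to occupy at most six consecutive positions in the cyclic order on $\partial\Pi$, so that the complementary $4N-6$ positions all end on $\partial\Delta$. Existence of such a $\Pi$ would follow from the planarity of the subgraph of $G$ on the disc-vertices, combined with an Euler-characteristic count relating the total number of disc-disc $\kappa$-bands to the number of discs. Finally, to ensure that $\Phi_\Delta(\Pi)$ contains no discs, I would choose $\Pi$ so as to minimize the number of $2$-cells inside its fan $\Phi_\Delta(\Pi)$ among all discs enjoying the consecutive-arc property. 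Any disc sitting strictly inside the chosen $\Phi_\Delta(\Pi)$ would inherit its own extremal configuration with a strictly smaller fan, contradicting minimality; hence $\Phi_\Delta(\Pi)$ must be disc-free.

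The hard part will be promoting the easy statement ``some disc has few $\kappa$-bands going to other discs'' (supplied by a crude average-degree bound via $E \leq 3V - 6$ for planar simple graphs, which yields at most $5$ disc-disc bands at some vertex) to the stronger statement ``some disc has few disc-disc $\kappa$-bands and they occupy a single short cyclic arc around that disc.'' Accomplishing this requires the planar embedding in an essential way, not merely the underlying combinatorial graph: one must exhibit a face of $G$ incident to both $v_\Pi$ and $v_\infty$ whose share of $\partial\Pi$ has length at least $4N-6$, and this is where the cyclic order inherited from the planar embedding, rather than a raw count, does the work.
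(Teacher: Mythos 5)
The paper does not give a proof of this lemma at all; it simply cites Lemma~4.21 and Lemma~4.24 of \cite{BORS}. You are therefore attempting to reconstruct the BORS argument from scratch, which is a reasonable thing to do, and your high-level picture (planar graph on disc-vertices plus a vertex at infinity, Euler-characteristic counting, extremal disc) is consistent with how such lemmas are proved in the Ol'shanskii--Sapir circle of ideas. However, there is a genuine gap, and it lies precisely at the place you yourself flag as ``the hard part.''

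The bound $E \le 3V-6$ for simple planar graphs tells you only that some disc-vertex $v_\Pi$ has at most five disc-disc $\kappa$-bands. This does \emph{not} yield $4N-6$ consecutive boundary $\kappa$-bands: if the (at most) five disc-disc bands are scattered in the cyclic order around $\partial\Pi$, the remaining $\ge 4N-5$ boundary $\kappa$-bands are split into as many as five arcs, the longest of which is only on the order of $N$, far short of $4N-6$. You acknowledge that the cyclic order from the planar embedding ``does the work,'' but you do not say how, and it genuinely does not follow from a face-incidence observation alone. Moreover, the disc graph is in general a \emph{multigraph} (two discs can be joined by several $\kappa$-bands, and a disc can be joined to $v_\infty$ many times), so $E \le 3V-6$ does not even apply until you establish a reducibility lemma forcing multi-edges (or clusters of parallel $\kappa$-bands with no disc in the region between them) to be eliminable. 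This is what \cite[Lemma~4.21]{BORS} supplies, and it is missing from your argument.

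Your final step --- that if $\Phi_\Delta(\Pi)$ contained a disc, that inner disc ``would inherit its own extremal configuration with a strictly smaller fan'' --- is also unjustified. A disc inside $\Phi_\Delta(\Pi)$ need not have any $\kappa$-band reaching $\partial\Delta$ at all (all its bands might terminate on $\Pi$ or on other inner discs), so it does not automatically enjoy the consecutive-arc property, and no ``smaller fan'' is forthcoming without a separate induction. In short, the scaffolding is plausible and the surgery claim about $\kappa$-annuli is fine, but the two central steps --- promoting the crude average-degree bound to a consecutive arc of length $4N-6$, and showing the fan is disc-free --- are asserted rather than proved, and neither follows from what you have written.
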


\begin{proof}

This lemma follows from \cite[Lemma 4.21]{BORS} and \cite[Lemma 4.24]{BORS}.

\end{proof}

\begin{lemma}\label{barY}

In a minimal diagram $\Delta$ over $P_D(M_{\infty})$, a $\bar{Y}$ band can not begin and end on the same disc.  If $\Delta$ contains no discs, then $\Delta$ contains no $\bar{Y}$ annuli.

\end{lemma}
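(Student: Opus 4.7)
The plan is to prove both statements by contradiction, combining innermost-band arguments with Lemma \ref{noThetaAn} (no $\hat{\Theta}$-annuli in minimal diagrams) and, for the first part, Lemma \ref{kappahub} (the structure of $\kappa$-bands emanating from discs).

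For the second statement, suppose $\Delta$ contains no discs but contains some $\bar{Y}$-annulus $\mathcal{A}$, chosen innermost. Inspection of the relators of $P_D(M_{\infty})$ shows that every cell eligible to appear in a $\bar{Y}$-band has, besides its two $\bar{Y}$-edges, exactly two $\hat{\Theta}$-edges; hence the inner boundary of $\mathcal{A}$ is a cyclic word in $\hat{\Theta}^{\pm 1}$, and the subdiagram $\Delta_0$ bounded on the inside of $\mathcal{A}$ has all boundary edges labeled by $\hat{\Theta}^{\pm 1}$. Each such edge starts an $\hat{\Theta}$-band inside $\Delta_0$; by Lemma \ref{noThetaAn} none of these is an annulus, so they pair into chords. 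Picking an innermost chord $\mathcal{C}$ and examining the enclosed subregion either produces a $\bar{Y}$-annulus strictly inside $\mathcal{A}$ (contradicting the innermost choice of $\mathcal{A}$) or exposes a reducible pair of cells at the two ends of $\mathcal{C}$ (contradicting minimality of $\Delta$).

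For the first statement, suppose $\mathcal{B}$ is a $\bar{Y}$-band whose two endpoints lie on a single disc $\Pi$. The median of $\mathcal{B}$, together with a subarc $\gamma$ of $\partial\Pi$ joining the endpoints, bounds a subdiagram $\Delta_1$ not containing $\Pi$. Choose $\mathcal{B}$ so that the area of $\Delta_1$ is minimal among all such configurations in $\Delta$. If $\Delta_1$ contained another disc $\Pi'$, Lemma \ref{kappahub} would provide $4N-6$ consecutive $\kappa$-bands starting on $\partial\Pi'$; none of these $\kappa$-bands can cross the $\bar{Y}$-band $\mathcal{B}$ (a crossing would force two $\bar{Y}$-bands to share a $\bar{Y}$-edge outside any admissible band-cell configuration), so at least one must return to $\Pi'$, yielding a strictly smaller such configuration and contradicting the choice of $\mathcal{B}$. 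Hence $\Delta_1$ has no discs, so by the second statement it contains no $\bar{Y}$-annulus either. But each $\bar{Y}$-edge of $\gamma$ begins a $\bar{Y}$-band in $\Delta_1$; these bands cannot form annuli, cannot escape through the median of $\mathcal{B}$, and cannot cross $\mathcal{B}$, so each must terminate back on $\gamma$ — producing a $\bar{Y}$-band enclosed by a strictly smaller subdiagram, again contradicting the minimality of $\mathcal{B}$.

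The main obstacle is the ``no crossing'' step in the first part: two $\bar{Y}$-bands built from disjoint families of $\bar{Y}$-letters could in principle cross, so care is needed either to restrict the innermost-area induction to sub-bands over a single $\bar{Y}$-letter family (e.g.\ a fixed $\kappa_i$ or a fixed $a \in \hat{A}$) or to rule out crossings via a Jordan-curve argument using the medians of $\mathcal{B}$ and $\mathcal{A}$. Once that is handled cleanly, the two arguments fit neatly together, with the second statement feeding directly into the first through the reduction to a disc-free $\Delta_1$.
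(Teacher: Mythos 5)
The paper's treatment of this lemma is a bare citation to Lemmas 4.30 and 4.32 of \cite{BORS}; there is no in-text proof for you to match, so you are attempting a from-scratch argument that the paper deliberately outsources.

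That attempt has a genuine gap in the second part. You claim that every 2-cell eligible to appear in a $\bar{Y}$-band has, besides its two $\bar{Y}$-edges, exactly two $\hat{\Theta}$-edges and nothing else, so that the inner boundary of an innermost $\bar{Y}$-annulus is a cyclic word in $\hat{\Theta}^{\pm1}$. That is true for auxiliary 2-cells (labels $\rho a \rho^{-1}a^{-1}$ and $\rho\kappa_i\rho^{-1}\kappa_i^{-1}$), hence for pure $\kappa$-bands in a disc-free diagram, but it fails for transition 2-cells. A transition relator has the form $\rho^{-1}\textbf{u}\rho(a\textbf{v}b)^{-1}$ where $\textbf{u}$ and $\textbf{v}$ are $(m,n)$-subwords of admissible words; such a subword contains at least one $\hat{Q}$-letter even when $m=n$. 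A transition 2-cell with exactly two $\hat{A}$-edges is a perfectly legitimate member of a $\bar{Y}$-band, and such a cell contributes $\hat{Q}$-edges (in addition to its two $\hat{\Theta}$-edges) to the top and bottom of the band. So the inner boundary of a $\bar{Y}$-annulus need not be purely $\hat{\Theta}$, and the ensuing innermost-chord/reducibility argument, which rests entirely on Lemma \ref{noThetaAn}, does not get started: one would also need control over $\hat{Q}$-bands and mixed $(\hat{\Theta},\hat{Q})$-annuli, which is exactly the additional structural machinery that the cited BORS lemmas supply. The no-crossing step you flag in the first part is a second soft spot, but it is secondary: as written, part 1 routes through part 2 and therefore inherits this gap, and it further relies on the unargued claim that a $\bar{Y}$-band starting on $\gamma$ cannot terminate on a transition cell rather than returning to $\gamma$.
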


\begin{proof}

This follows from \cite[Lemma 4.30]{BORS} and \cite[Lemma 4.32]{BORS}.

\end{proof}

\begin{lemma}\label{mindiagbounds}

If $\Delta$ is a minimal diagram over $P_D(M_{\infty})$, then the number of $\hat{\Theta}$ 2-cells in $\Delta$ is $O(|\partial\Delta|^3)$.  Also, the sum of the boundary lengths of the discs in $\Delta$ is $O(|\partial\Delta|^2)$

\end{lemma}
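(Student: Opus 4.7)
My plan is to prove the two estimates in sequence: I first bound the sum of disc perimeters by $O(|\partial\Delta|^2)$, then combine this with Lemma \ref{noThetaAn} to bound the number of $\hat{\Theta}$ 2-cells by $O(|\partial\Delta|^3)$. Set $n=|\partial\Delta|$ throughout.

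For the disc-perimeter bound I would proceed in two stages. First, I would show that $\Delta$ contains at most $O(n)$ discs by iterating Lemma \ref{kappahub}: at each stage, select a disc $\Pi$ whose subdiagram $\Phi_{\Delta}(\Pi)$ is disc-free, charge it against $4N-6$ of the $\kappa$-edges on the current boundary, then excise $\Pi$, the $4N-6$ bounding $\kappa$-bands, and $\Phi_{\Delta}(\Pi)$ to obtain a smaller residual diagram. The crucial bookkeeping observation is that peeling a $\kappa$-band exposes only $\hat{\Theta}$-edges (each cell of such a band has the form $\rho\kappa\rho^{-1}\kappa^{-1}$, so its top and bottom consist of $\hat{\Theta}$-letters), so the stock of $\kappa$-edges on the residual boundary never grows, bounding the total number of discs by $n/(4N-6)$. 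Second, I would show $|W_\Pi|=O(n)$ uniformly. For an innermost disc $\Pi$, each of the $(4N-7)|W_\Pi|$ many $\hat{A}$-edges on $\partial\Pi$ lying between consecutive bounding $\kappa$-bands starts an $\hat{A}$-band trapped inside $\Phi_{\Delta}(\Pi)$: it cannot cross those $\kappa$-bands (their cells carry no $\hat{A}$-edge), cannot return to $\Pi$ by Lemma \ref{barY}, and has no other disc available to terminate on, so it must end on $\partial\Delta\cap\partial\Phi_{\Delta}(\Pi)$, giving $(4N-7)|W_\Pi|\leq n$. Extending this via the same peeling induction yields $|W_\Pi|=O(n)$ for every disc, and since each disc has perimeter $4N(|W_\Pi|+1)$, summing over the $O(n)$ discs produces the desired $O(n^2)$ bound.

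For the $\hat{\Theta}$-cell bound, I would first observe that every disc label $K(W)$ is a word in $\hat{A}\cup\hat{Q}\cup\{\kappa_i\}$, so disc boundaries carry no $\hat{\Theta}$-edges. Combined with Lemma \ref{noThetaAn} this forces every $\hat{\Theta}$-band to have both its start and end edges on $\partial\Delta$, yielding at most $n/2$ $\hat{\Theta}$-bands. To bound the length $L$ of a single such band $\mathcal{B}$, I would view each cell of $\mathcal{B}$ as a crossing with some other band: an auxiliary cell records a single crossing with an $\hat{A}$- or $\kappa$-band, and by Lemma \ref{rellength} a transition cell records a bounded number of crossings with $\hat{A}$-, $\hat{Q}$-, or $\kappa$-bands. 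Invoking the standard fact from \cite{BORS} that two bands in a minimal diagram cross at most once, $L$ is bounded by a constant times the total number of $\hat{A}$-, $\hat{Q}$-, and $\kappa$-bands in $\Delta$. Each such band has two endpoints lying on $\partial\Delta$ or on a disc boundary, so the first part of the lemma already bounds this count by $O(n^2)$. Multiplying $O(n)$ bands by $L=O(n^2)$ gives the desired $O(n^3)$.

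The main technical obstacle is the peeling induction used for the disc bound: I must verify that excising a disc together with its $\Phi_{\Delta}(\Pi)$ and $4N-6$ bounding $\kappa$-bands produces a diagram that is still minimal over $P_D(M_\infty)$, so that Lemmas \ref{kappahub}, \ref{barY}, and \ref{noThetaAn} continue to apply, and that the $\kappa$-edge bookkeeping on the residual boundary survives the surgery. This requires geometric arguments mirroring the structural lemmas of \cite{BORS}. A secondary concern is stating the two-bands-cross-at-most-once principle explicitly in the present union-machine setting, where new families of disc-related bands appear; the argument from \cite{BORS} adapts verbatim, but this adaptation must be spelled out to justify the length bound on $\mathcal{B}$.
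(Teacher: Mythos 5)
The paper proves this lemma by direct citation to \cite[Lemma 5.10]{BORS} and \cite[Lemma 5.14]{BORS}, leaning on the earlier-announced principle that the geometric lemmas of \cite{BORS} about $H(M)$-diagrams transfer verbatim to the union-machine setting. You instead try to rebuild both estimates from the lemmas already stated in this paper, which is a genuinely different and much more ambitious route; unfortunately it has a real gap.

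Your bound on $|W_\Pi|$ relies on the claim that an $\hat{A}$-band starting on $\partial\Pi$ and trapped in $\Phi_\Delta(\Pi)$ must terminate on $\partial\Delta$. But a $\bar{Y}$-band can also terminate on a transition $2$-cell: a transition relator $\rho^{-1}\mathbf{u}\rho = a\mathbf{v}b$ can carry more than two $\hat{A}$-edges, so an $\hat{A}$-band ends there. The paper itself makes this explicit in the proof of Lemma \ref{mathcala} (``every $\bar{Y}$ band in $\Phi_N(\Pi)$ that starts on $s_1$ must end either on $p_1$ or on a transition $2$-cell of $\Phi_N(\Pi)$''). Since you have not yet bounded the number of transition cells in $\Phi_\Delta(\Pi)$ at the point where you need $|W_\Pi| = O(n)$, the inequality $(4N-7)|W_\Pi| \le n$ does not follow, and your two estimates become circularly dependent: the disc-perimeter bound wants a $\hat\Theta$-cell bound, and your $\hat\Theta$-cell argument counts $\bar{Y}$- and $\hat{Q}$-bands by ``two endpoints on $\partial\Delta$ or a disc boundary'', which again forgets endpoints on transition cells. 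The same issue invalidates the final $O(n^2)$ count of $\bar{Y}$- and $\hat{Q}$-bands. In addition, you acknowledge but do not resolve two further obligations: that the residual diagram after excising $\Pi$, $\Phi_\Delta(\Pi)$, and the bounding $\kappa$-bands remains minimal over $P_D(M_\infty)$ (so that Lemmas \ref{kappahub}, \ref{barY}, \ref{noThetaAn} still apply), and that the ``two bands cross at most once'' principle holds in this setting. Both of these are nontrivial and are exactly the sort of content packaged inside \cite[Lemmas 5.10, 5.14]{BORS}, which is why the paper simply cites them rather than re-deriving the estimates.
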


\begin{proof}

This follows from \cite[Lemma 5.10]{BORS} and \cite[Lemma 5.14]{BORS}.\\

\end{proof}

We now prove that if $M_{\infty}$ is a c.e. union machine that accepts a decidable language, then the word problem for $P(M_{\infty})$ is solvable.

\begin{lemma}\label{mathcalt}

For a word $w$ in $P(M_{\infty})$, we can effectively construct a finite set $\mathcal{T}(w)$ of transition relators of $P(M_{\infty})$ such that if $w$ is trivial in $P(M_{\infty})$ with minimal $P_D(M_{\infty})$ diagram $\Delta$, then the label of every transition 2-cell in $\Delta$ is in $\mathcal{T}(w)$.

\end{lemma}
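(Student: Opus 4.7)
The plan is to extract the finite set of admissible command letters directly from the boundary word $w$ by analyzing the $\hat{\Theta}$-band structure of any minimal $P_D(M_\infty)$ diagram $\Delta$ with $\partial\Delta = w$. First I would observe that the relators of $P(M_\infty)$ fall into three kinds: transition relators $\rho^{-1}\mathbf{u}\rho = a\mathbf{v}b$; auxiliary relators of the forms $\rho a \rho^{-1}=a$ (for $a\in \hat{A}$) and $\rho \kappa_i = \kappa_i \rho$; and the hub relator $K(W_0)$. The first two families each contain exactly two $\hat{\Theta}$-edges, both labeled by the \emph{same} command letter $\rho$, whereas disc labels $K(W)$ are words in $\hat{A}\cup \hat{Q}\cup\{\kappa_1,\dots,\kappa_{2N}\}$ and contain no $\hat{\Theta}$-letters at all. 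Consequently every maximal $\hat{\Theta}$-band in $\Delta$ is associated to a single fixed command letter $\rho$ of $S(M'_\infty)$, and neither of its endpoint edges can abut a disc.

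Invoking Lemma \ref{noThetaAn}, the minimality of $\Delta$ forbids $\hat{\Theta}$-annuli. Combined with the previous paragraph, this forces every maximal $\hat{\Theta}$-band to terminate with its start and end edges lying on $\partial\Delta$. The label of any such endpoint edge is $\rho^{\pm 1}$, so the command letter $\rho$ of every $\hat{\Theta}$-band --- and in particular the command letter appearing on the boundary of every transition $2$-cell of $\Delta$ --- must already occur among the letters of $w$.

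The construction of $\mathcal{T}(w)$ is then effective and essentially mechanical. I parse $w$ to collect the finite set $\Theta(w)\subseteq \hat{\Theta}^{\pm 1}$ of command letters occurring in it. By the notational convention introduced in Section~2, each such $\rho$ is a symbol whose index is an $M'_\infty$ command symbol $\tau$, so the finite set $T(w):=\{\tau : \rho_\tau^{\pm 1}\in \Theta(w)\}$ of $M'_\infty$ commands can be read off in linear time. For each $\tau\in T(w)$ I invoke Lemma \ref{comenu} to produce the finite set of transition relators of the submachine $\mathcal{R}_\tau$, and take $\mathcal{T}(w)$ to be the union of these sets, closed under inverses and cyclic conjugates. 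Any transition $2$-cell in a minimal $\Delta$ has its associated $\rho$ in $\Theta(w)$, hence its boundary label lies in $\mathcal{R}_\tau$ for some $\tau\in T(w)$, and therefore in $\mathcal{T}(w)$.

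The main obstacle is the band-structure argument in the first two paragraphs; once one has carefully verified that the hub relator and all disc labels are disjoint from $\hat{\Theta}$, every remaining step reduces to the already-established Lemmas \ref{noThetaAn} and \ref{comenu}, plus the explicit indexing convention on command letters.
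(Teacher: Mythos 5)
Your proof is correct and takes essentially the same approach as the paper: invoke Lemma \ref{noThetaAn} to force every $\hat{\Theta}$-band to terminate on $\partial\Delta$ (using the fact that discs and the hub carry no $\hat{\Theta}$-edges), read the command symbols off $w$, and apply Lemma \ref{comenu}. The paper's own proof is just a three-sentence compression of exactly this argument.
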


\begin{proof}

By Lemma \ref{noThetaAn} there are no $\hat{\Theta}$ annuli in $\Delta$.  Thus every $\hat{\Theta}$ band in $\Delta$ must both start and end on the boundary of $\Delta$.  The lemma now follows from Lemma \ref{comenu}.

\end{proof}

For a word $w$ in $P(M_{\infty})$, let $\mathcal{A}(w)$ be the set of $\hat{A}$ letters that appear in $w$ or in an element of $\mathcal{T}(w)$.  Let $\mathcal{Q}(w)$ be the set of $\hat{Q}$ letters that appear in $w$ or in an element of $\mathcal{T}(w)$.\\

\begin{lemma}\label{mathcala}

If $w$ is trivial in $P(M_{\infty})$ with minimal $P_D(M_{\infty})$ diagram $\Delta$ then the following statements hold:

\begin{enumerate}
\item
Every $\hat{A}$ letter that labels an edge of $\Delta$ is contained in $\mathcal{A}(w)$.

\item
Every $\hat{Q}$ letter that labels an edge of $\Delta$ is contained in $\mathcal{Q}(w)$.

\end{enumerate}

\end{lemma}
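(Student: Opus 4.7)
The plan is to prove both parts by a band-tracing argument combined with an induction on the number of discs in $\Delta$. I first observe that the relators of $P_D(M_\infty)$ are of four types: transition relators, auxiliary commutation relators $\rho a\rho^{-1}=a$ with $a\in\hat{A}$, auxiliary commutation relators $\rho\kappa_i=\kappa_i\rho$, and disc labels $K(W_0)$. Among these, $\hat{A}$-edges appear only in transition cells, in $\hat{A}$-commutation cells, and in discs, while $\hat{Q}$-edges appear only in transition cells and in discs. Consequently, an interior $\hat{Q}$-edge of $\Delta$ is always shared between a pair consisting of transition cells and/or discs, whereas an interior $\hat{A}$-edge may additionally sit between two commutation cells.

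For part (1), fix an edge $e$ of $\Delta$ labeled by $a\in\hat{A}$. If $e\subseteq\partial\Delta$ then $a$ is a letter of $w$, so $a\in\mathcal{A}(w)$. If $e$ bounds some transition 2-cell $\pi$, then $\text{Lab}(\partial\pi)\in\mathcal{T}(w)$ by Lemma \ref{mathcalt}, and $a\in\mathcal{A}(w)$. Otherwise $e$ bounds only commutation cells and/or discs, and I would consider the maximal $a$-band $\mathcal{B}$ through $e$, whose interior cells are all commutation cells $\rho a\rho^{-1}=a$ with constant edge-label $a$. Each endpoint of $\mathcal{B}$ is an $a$-labeled edge on $\partial\Delta$, on a transition 2-cell, or on a disc, and the first two cases again yield $a\in\mathcal{A}(w)$. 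Part (2) is handled by the same scheme applied to $\hat{Q}$-edges and is in fact simpler: since $\hat{Q}$-edges never appear in commutation cells, every interior $\hat{Q}$-edge is already bounded by at least one transition cell or disc, and the transition-cell case is immediate from Lemma \ref{mathcalt}.

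The main obstacle in both parts is the case in which an end-edge of the relevant band lies on the boundary of a disc, and I would handle it by induction on the number of discs in $\Delta$. The base case is immediate from Lemma \ref{barY}, which rules out $\bar{Y}$-annuli when $\Delta$ has no discs, forcing every $a$-band's endpoints onto $\partial\Delta$ or onto transition cells. For the inductive step I would invoke Lemma \ref{kappahub} to select a disc $\Pi$ whose subdiagram $\Phi_\Delta(\Pi)$ is disc-free, and excise $\Pi$ together with the $4N-6$ consecutive $\kappa$-bands from $\partial\Pi$ to $\partial\Delta$. Because the tops and bottoms of $\kappa$-bands are paths of $\hat{\Theta}$-edges, the excision introduces no new $\hat{A}$- or $\hat{Q}$-letters on the new boundary, so the inductive hypothesis applied to the resulting diagram $\Delta'$ -- which has strictly fewer discs and whose boundary word $w'$ satisfies $\mathcal{A}(w')\subseteq\mathcal{A}(w)$ and $\mathcal{Q}(w')\subseteq\mathcal{Q}(w)$ -- controls every $\hat{A}$- and $\hat{Q}$-letter appearing in $\Delta\setminus(\Pi\cup\Phi_\Delta(\Pi))$. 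Combined with the base-case analysis inside $\Phi_\Delta(\Pi)$ itself (which is disc-free by choice of $\Pi$), this completes the proof. The principal technical subtlety is verifying that $\Delta'$ remains minimal and that $\mathcal{T}(w')\subseteq\mathcal{T}(w)$, both of which follow from the effective construction of $\mathcal{T}(\cdot)$ in Lemma \ref{mathcalt} together with the structural results of Section 7.
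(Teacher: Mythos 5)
Your overall skeleton -- induction on the number of discs, base case via band-tracing and Lemma \ref{barY}, inductive step via excision of a disc $\Pi$ with disc-free $\Phi_\Delta(\Pi)$ selected by Lemma \ref{kappahub} -- matches the paper's proof, as does your observation that $\hat Q$-edges never appear in commutation cells, which makes part (2) more direct. However, there is a genuine gap in the inductive step: your assertion that ``the excision introduces no new $\hat A$- or $\hat Q$-letters on the new boundary'' is false, and because it is false the claimed inclusions $\mathcal{A}(w')\subseteq\mathcal{A}(w)$ and $\mathcal{Q}(w')\subseteq\mathcal{Q}(w)$ are unsupported. After deleting $\Pi$ and $\Phi_\Delta(\Pi)$, the boundary of the remaining diagram is $p_2 q_2^{-1} s_2^{-1} q_1^{-1}$, where $s_2$ is the portion of $\partial\Pi$ that was \emph{not} shared with $\Phi_\Delta(\Pi)$. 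The tops and bottoms $q_1,q_2$ of the outermost $\kappa$-bands are indeed $\hat\Theta$-paths, but $s_2$ is a subpath of a disc boundary, and disc labels $K(W)$ are full of $\hat A$- and $\hat Q$-letters. So $s_2$ absolutely does put new $\hat A$- and $\hat Q$-letters on the new boundary, and the induction hypothesis applied to $\Delta_1$ only controls those letters via $\mathcal{A}(w')$ and $\mathcal{Q}(w')$, which you have not tied back to $\mathcal{A}(w)$ and $\mathcal{Q}(w)$.

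The missing ingredient is the observation, used in the paper, that Lemma \ref{kappahub} gives $|s_1|>\tfrac{1}{2}|\partial\Pi|$, and that the disc label $K(W)$ consists of $4N$ alternating copies of $W^{\pm1}$ separated by single $\kappa$-letters; hence any subpath of $\partial\Pi$ exceeding half its length already contains every $\hat A$- and $\hat Q$-letter of $W$. This lets you conclude that every $\hat A$- (resp. $\hat Q$-) letter on $s_2$ also labels an edge on $s_1$, and the disc-free analysis of $\Phi_\Delta(\Pi)$ then places that letter in $\text{Lab}(p_1)$ (a subword of $w$) or in an element of $\mathcal{T}(w)$. Only with this bridge does the induction hypothesis close, via the additional fact (needing Lemma \ref{noThetaAn}) that $\mathcal{T}(\text{Lab}(\partial\Delta_1))\subseteq\mathcal{T}(w)$, since $\Delta$ has no $\hat\Theta$-annuli. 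Without the half-length argument on $\partial\Pi$ your inductive step does not go through.
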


\begin{proof}

We prove this Lemma by induction on the number of discs in $\Delta$.  If $\Delta$ contains zero discs then part 1 follows from Lemma \ref{mathcalt} and the fact that every $\hat{Y}$-band in $\Delta$ begins (ends) either on $\partial\Delta$ or on the boundary of a transition cell of $\Delta$. Part 2 follows from the fact that every 2-cell in $\Delta$ whose boundary contains a $\hat{Q}$ edge is a transition 2-cell of $\Delta$.\\

If $\Delta$ contains $n>0$ discs, then by Lemma \ref{kappahub} there exists a disc $\Pi$ in $\Delta$ such that $\Phi_N(\Pi)$ contains no discs.  Since $\Phi_N(\Pi)$ is bounded by $\textbf{top}(\mathcal{B}_1), \textbf{bot}(\mathcal{B}_{4N-6})$, $\partial\Delta$ and $\partial\Pi$, the boundary of $\Phi_N(\Pi)$ is $p_1q_1s_1^{-1}q_2$ where $p_1$ is a subpath of the boundary of $\Delta$, $q_1,q_2$ are $\hat{\Theta}$ paths, and $s_1$ is a subpath of the boundary of $\Pi$.  The boundary of $\Pi$ is $s_1s_2$, and the boundary of $\Delta$ is $p_1p_2$\\

\begin{center}

\unitlength 1mm 
\linethickness{0.4pt}
\ifx\plotpoint\undefined\newsavebox{\plotpoint}\fi 
\begin{picture}(92.03,98.5)(0,0)
\put(92.03,47){\line(0,1){1.5027}}
\put(92.004,48.503){\line(0,1){1.501}}
\multiput(91.927,50.004)(-.03203,.37437){4}{\line(0,1){.37437}}
\multiput(91.799,51.501)(-.029857,.248707){6}{\line(0,1){.248707}}
\multiput(91.62,52.993)(-.032853,.21218){7}{\line(0,1){.21218}}
\multiput(91.39,54.479)(-.03117,.164061){9}{\line(0,1){.164061}}
\multiput(91.109,55.955)(-.033076,.146611){10}{\line(0,1){.146611}}
\multiput(90.779,57.421)(-.031717,.121164){12}{\line(0,1){.121164}}
\multiput(90.398,58.875)(-.0330775,.1107794){13}{\line(0,1){.1107794}}
\multiput(89.968,60.315)(-.0319272,.0949744){15}{\line(0,1){.0949744}}
\multiput(89.489,61.74)(-.0329532,.0879651){16}{\line(0,1){.0879651}}
\multiput(88.962,63.148)(-.0319434,.0771459){18}{\line(0,1){.0771459}}
\multiput(88.387,64.536)(-.0327389,.0720102){19}{\line(0,1){.0720102}}
\multiput(87.765,65.904)(-.0334187,.0673083){20}{\line(0,1){.0673083}}
\multiput(87.097,67.251)(-.0324513,.0601168){22}{\line(0,1){.0601168}}
\multiput(86.383,68.573)(-.0329849,.0564101){23}{\line(0,1){.0564101}}
\multiput(85.624,69.871)(-.0334372,.0529493){24}{\line(0,1){.0529493}}
\multiput(84.821,71.141)(-.0325152,.0477944){26}{\line(0,1){.0477944}}
\multiput(83.976,72.384)(-.0328635,.0449288){27}{\line(0,1){.0449288}}
\multiput(83.089,73.597)(-.03315,.0422174){28}{\line(0,1){.0422174}}
\multiput(82.161,74.779)(-.0333795,.0396455){29}{\line(0,1){.0396455}}
\multiput(81.193,75.929)(-.033556,.0372003){30}{\line(0,1){.0372003}}
\multiput(80.186,77.045)(-.0336834,.034871){31}{\line(0,1){.034871}}
\multiput(79.142,78.126)(-.0348539,.0337011){31}{\line(-1,0){.0348539}}
\multiput(78.061,79.171)(-.0371833,.033575){30}{\line(-1,0){.0371833}}
\multiput(76.946,80.178)(-.0396285,.0333997){29}{\line(-1,0){.0396285}}
\multiput(75.797,81.146)(-.0422005,.0331715){28}{\line(-1,0){.0422005}}
\multiput(74.615,82.075)(-.0449121,.0328864){27}{\line(-1,0){.0449121}}
\multiput(73.402,82.963)(-.0477779,.0325396){26}{\line(-1,0){.0477779}}
\multiput(72.16,83.809)(-.0529323,.0334641){24}{\line(-1,0){.0529323}}
\multiput(70.89,84.612)(-.0563933,.0330136){23}{\line(-1,0){.0563933}}
\multiput(69.593,85.372)(-.0601003,.0324819){22}{\line(-1,0){.0601003}}
\multiput(68.27,86.086)(-.0672913,.0334529){20}{\line(-1,0){.0672913}}
\multiput(66.925,86.755)(-.0719935,.0327756){19}{\line(-1,0){.0719935}}
\multiput(65.557,87.378)(-.0771296,.0319826){18}{\line(-1,0){.0771296}}
\multiput(64.168,87.954)(-.0879483,.032998){16}{\line(-1,0){.0879483}}
\multiput(62.761,88.482)(-.0949582,.0319756){15}{\line(-1,0){.0949582}}
\multiput(61.337,88.961)(-.1107625,.0331339){13}{\line(-1,0){.1107625}}
\multiput(59.897,89.392)(-.121148,.031779){12}{\line(-1,0){.121148}}
\multiput(58.443,89.773)(-.146594,.033151){10}{\line(-1,0){.146594}}
\multiput(56.977,90.105)(-.164045,.031253){9}{\line(-1,0){.164045}}
\multiput(55.501,90.386)(-.212163,.032961){7}{\line(-1,0){.212163}}
\multiput(54.016,90.617)(-.248692,.029984){6}{\line(-1,0){.248692}}
\multiput(52.524,90.797)(-.37436,.03222){4}{\line(-1,0){.37436}}
\put(51.026,90.926){\line(-1,0){1.5009}}
\put(49.525,91.003){\line(-1,0){3.0055}}
\put(46.52,91.005){\line(-1,0){1.501}}
\multiput(45.019,90.929)(-.37439,-.03184){4}{\line(-1,0){.37439}}
\multiput(43.521,90.801)(-.248723,-.029731){6}{\line(-1,0){.248723}}
\multiput(42.029,90.623)(-.212197,-.032745){7}{\line(-1,0){.212197}}
\multiput(40.543,90.394)(-.164077,-.031086){9}{\line(-1,0){.164077}}
\multiput(39.067,90.114)(-.146628,-.033001){10}{\line(-1,0){.146628}}
\multiput(37.6,89.784)(-.12118,-.031655){12}{\line(-1,0){.12118}}
\multiput(36.146,89.404)(-.1107962,-.0330211){13}{\line(-1,0){.1107962}}
\multiput(34.706,88.975)(-.0949907,-.0318789){15}{\line(-1,0){.0949907}}
\multiput(33.281,88.497)(-.0879818,-.0329085){16}{\line(-1,0){.0879818}}
\multiput(31.873,87.97)(-.0771621,-.0319041){18}{\line(-1,0){.0771621}}
\multiput(30.484,87.396)(-.0720268,-.0327023){19}{\line(-1,0){.0720268}}
\multiput(29.116,86.775)(-.0673253,-.0333844){20}{\line(-1,0){.0673253}}
\multiput(27.769,86.107)(-.0601333,-.0324207){22}{\line(-1,0){.0601333}}
\multiput(26.446,85.394)(-.0564269,-.0329562){23}{\line(-1,0){.0564269}}
\multiput(25.149,84.636)(-.0529663,-.0334102){24}{\line(-1,0){.0529663}}
\multiput(23.877,83.834)(-.047811,-.0324909){26}{\line(-1,0){.047811}}
\multiput(22.634,82.989)(-.0449455,-.0328407){27}{\line(-1,0){.0449455}}
\multiput(21.421,82.102)(-.0422342,-.0331286){28}{\line(-1,0){.0422342}}
\multiput(20.238,81.175)(-.0396624,-.0333593){29}{\line(-1,0){.0396624}}
\multiput(19.088,80.207)(-.0372174,-.0335371){30}{\line(-1,0){.0372174}}
\multiput(17.972,79.201)(-.0348882,-.0336656){31}{\line(-1,0){.0348882}}
\multiput(16.89,78.158)(-.0337188,-.0348367){31}{\line(0,-1){.0348367}}
\multiput(15.845,77.078)(-.0335939,-.0371662){30}{\line(0,-1){.0371662}}
\multiput(14.837,75.963)(-.0334198,-.0396115){29}{\line(0,-1){.0396115}}
\multiput(13.868,74.814)(-.033193,-.0421836){28}{\line(0,-1){.0421836}}
\multiput(12.938,73.633)(-.0329093,-.0448953){27}{\line(0,-1){.0448953}}
\multiput(12.05,72.421)(-.0325639,-.0477613){26}{\line(0,-1){.0477613}}
\multiput(11.203,71.179)(-.033491,-.0529153){24}{\line(0,-1){.0529153}}
\multiput(10.399,69.909)(-.0330423,-.0563765){23}{\line(0,-1){.0563765}}
\multiput(9.639,68.612)(-.0325125,-.0600837){22}{\line(0,-1){.0600837}}
\multiput(8.924,67.29)(-.0334872,-.0672742){20}{\line(0,-1){.0672742}}
\multiput(8.254,65.945)(-.0328122,-.0719768){19}{\line(0,-1){.0719768}}
\multiput(7.631,64.577)(-.0320219,-.0771133){18}{\line(0,-1){.0771133}}
\multiput(7.055,63.189)(-.0330428,-.0879315){16}{\line(0,-1){.0879315}}
\multiput(6.526,61.782)(-.0320239,-.0949419){15}{\line(0,-1){.0949419}}
\multiput(6.045,60.358)(-.0331902,-.1107456){13}{\line(0,-1){.1107456}}
\multiput(5.614,58.919)(-.03184,-.121132){12}{\line(0,-1){.121132}}
\multiput(5.232,57.465)(-.033225,-.146577){10}{\line(0,-1){.146577}}
\multiput(4.9,55.999)(-.031337,-.164029){9}{\line(0,-1){.164029}}
\multiput(4.618,54.523)(-.033069,-.212147){7}{\line(0,-1){.212147}}
\multiput(4.386,53.038)(-.030111,-.248677){6}{\line(0,-1){.248677}}
\multiput(4.205,51.546)(-.03241,-.37434){4}{\line(0,-1){.37434}}
\put(4.076,50.048){\line(0,-1){1.5009}}
\put(3.997,48.548){\line(0,-1){4.5065}}
\multiput(4.07,44.041)(.03165,-.3744){4}{\line(0,-1){.3744}}
\multiput(4.196,42.543)(.029604,-.248738){6}{\line(0,-1){.248738}}
\multiput(4.374,41.051)(.032637,-.212213){7}{\line(0,-1){.212213}}
\multiput(4.602,39.565)(.031003,-.164092){9}{\line(0,-1){.164092}}
\multiput(4.881,38.089)(.032927,-.146645){10}{\line(0,-1){.146645}}
\multiput(5.211,36.622)(.031594,-.121196){12}{\line(0,-1){.121196}}
\multiput(5.59,35.168)(.0329647,-.110813){13}{\line(0,-1){.110813}}
\multiput(6.018,33.727)(.0318306,-.0950069){15}{\line(0,-1){.0950069}}
\multiput(6.496,32.302)(.0328637,-.0879986){16}{\line(0,-1){.0879986}}
\multiput(7.022,30.894)(.0337392,-.0817183){17}{\line(0,-1){.0817183}}
\multiput(7.595,29.505)(.0326656,-.0720434){19}{\line(0,-1){.0720434}}
\multiput(8.216,28.136)(.0333502,-.0673423){20}{\line(0,-1){.0673423}}
\multiput(8.883,26.789)(.0323901,-.0601498){22}{\line(0,-1){.0601498}}
\multiput(9.595,25.466)(.0329275,-.0564437){23}{\line(0,-1){.0564437}}
\multiput(10.353,24.168)(.0333833,-.0529833){24}{\line(0,-1){.0529833}}
\multiput(11.154,22.896)(.0324666,-.0478275){26}{\line(0,-1){.0478275}}
\multiput(11.998,21.653)(.0328178,-.0449622){27}{\line(0,-1){.0449622}}
\multiput(12.884,20.439)(.0331071,-.0422511){28}{\line(0,-1){.0422511}}
\multiput(13.811,19.256)(.0333391,-.0396794){29}{\line(0,-1){.0396794}}
\multiput(14.778,18.105)(.0335182,-.0372345){30}{\line(0,-1){.0372345}}
\multiput(15.784,16.988)(.0336478,-.0349053){31}{\line(0,-1){.0349053}}
\multiput(16.827,15.906)(.0348196,-.0337366){31}{\line(1,0){.0348196}}
\multiput(17.906,14.86)(.0371491,-.0336128){30}{\line(1,0){.0371491}}
\multiput(19.021,13.852)(.0395944,-.03344){29}{\line(1,0){.0395944}}
\multiput(20.169,12.882)(.0421667,-.0332145){28}{\line(1,0){.0421667}}
\multiput(21.349,11.952)(.0448786,-.0329321){27}{\line(1,0){.0448786}}
\multiput(22.561,11.063)(.0477447,-.0325882){26}{\line(1,0){.0477447}}
\multiput(23.802,10.215)(.0528982,-.033518){24}{\line(1,0){.0528982}}
\multiput(25.072,9.411)(.0563597,-.033071){23}{\line(1,0){.0563597}}
\multiput(26.368,8.65)(.0600672,-.0325431){22}{\line(1,0){.0600672}}
\multiput(27.69,7.934)(.0672572,-.0335214){20}{\line(1,0){.0672572}}
\multiput(29.035,7.264)(.0719601,-.0328488){19}{\line(1,0){.0719601}}
\multiput(30.402,6.64)(.077097,-.0320611){18}{\line(1,0){.077097}}
\multiput(31.79,6.063)(.0879147,-.0330875){16}{\line(1,0){.0879147}}
\multiput(33.197,5.533)(.0949256,-.0320722){15}{\line(1,0){.0949256}}
\multiput(34.62,5.052)(.1107287,-.0332466){13}{\line(1,0){.1107287}}
\multiput(36.06,4.62)(.121115,-.031902){12}{\line(1,0){.121115}}
\multiput(37.513,4.237)(.146561,-.0333){10}{\line(1,0){.146561}}
\multiput(38.979,3.904)(.164013,-.03142){9}{\line(1,0){.164013}}
\multiput(40.455,3.621)(.21213,-.033177){7}{\line(1,0){.21213}}
\multiput(41.94,3.389)(.248661,-.030237){6}{\line(1,0){.248661}}
\multiput(43.432,3.208)(.37432,-.0326){4}{\line(1,0){.37432}}
\put(44.929,3.077){\line(1,0){1.5009}}
\put(46.43,2.998){\line(1,0){3.0055}}
\put(49.436,2.994){\line(1,0){1.5011}}
\multiput(50.937,3.068)(.37442,.03146){4}{\line(1,0){.37442}}
\multiput(52.434,3.194)(.248753,.029478){6}{\line(1,0){.248753}}
\multiput(53.927,3.371)(.21223,.032529){7}{\line(1,0){.21223}}
\multiput(55.412,3.599)(.164108,.030919){9}{\line(1,0){.164108}}
\multiput(56.889,3.877)(.146662,.032852){10}{\line(1,0){.146662}}
\multiput(58.356,4.205)(.121212,.031532){12}{\line(1,0){.121212}}
\multiput(59.811,4.584)(.1108297,.0329083){13}{\line(1,0){.1108297}}
\multiput(61.251,5.012)(.0950231,.0317822){15}{\line(1,0){.0950231}}
\multiput(62.677,5.488)(.0880153,.0328189){16}{\line(1,0){.0880153}}
\multiput(64.085,6.013)(.0817354,.0336976){17}{\line(1,0){.0817354}}
\multiput(65.474,6.586)(.0720601,.0326289){19}{\line(1,0){.0720601}}
\multiput(66.844,7.206)(.0673592,.0333159){20}{\line(1,0){.0673592}}
\multiput(68.191,7.873)(.0601663,.0323595){22}{\line(1,0){.0601663}}
\multiput(69.514,8.584)(.0564604,.0328987){23}{\line(1,0){.0564604}}
\multiput(70.813,9.341)(.0530003,.0333563){24}{\line(1,0){.0530003}}
\multiput(72.085,10.142)(.0497578,.0337399){25}{\line(1,0){.0497578}}
\multiput(73.329,10.985)(.0449789,.0327949){27}{\line(1,0){.0449789}}
\multiput(74.543,11.871)(.0422679,.0330855){28}{\line(1,0){.0422679}}
\multiput(75.727,12.797)(.0396964,.0333189){29}{\line(1,0){.0396964}}
\multiput(76.878,13.763)(.0372515,.0334992){30}{\line(1,0){.0372515}}
\multiput(77.996,14.768)(.0349224,.0336301){31}{\line(1,0){.0349224}}
\multiput(79.078,15.811)(.0326995,.0337148){32}{\line(0,1){.0337148}}
\multiput(80.125,16.89)(.0336317,.0371319){30}{\line(0,1){.0371319}}
\multiput(81.134,18.004)(.0334601,.0395774){29}{\line(0,1){.0395774}}
\multiput(82.104,19.151)(.0332359,.0421498){28}{\line(0,1){.0421498}}
\multiput(83.035,20.332)(.0329549,.0448618){27}{\line(0,1){.0448618}}
\multiput(83.924,21.543)(.0326125,.0477281){26}{\line(0,1){.0477281}}
\multiput(84.772,22.784)(.0335449,.0528812){24}{\line(0,1){.0528812}}
\multiput(85.577,24.053)(.0330997,.0563429){23}{\line(0,1){.0563429}}
\multiput(86.339,25.349)(.0325736,.0600506){22}{\line(0,1){.0600506}}
\multiput(87.055,26.67)(.0335557,.0672401){20}{\line(0,1){.0672401}}
\multiput(87.726,28.015)(.0328855,.0719434){19}{\line(0,1){.0719434}}
\multiput(88.351,29.382)(.0321004,.0770807){18}{\line(0,1){.0770807}}
\multiput(88.929,30.769)(.0331323,.0878978){16}{\line(0,1){.0878978}}
\multiput(89.459,32.175)(.0321205,.0949092){15}{\line(0,1){.0949092}}
\multiput(89.941,33.599)(.033303,.1107118){13}{\line(0,1){.1107118}}
\multiput(90.374,35.038)(.031964,.121099){12}{\line(0,1){.121099}}
\multiput(90.757,36.492)(.033374,.146544){10}{\line(0,1){.146544}}
\multiput(91.091,37.957)(.031504,.163997){9}{\line(0,1){.163997}}
\multiput(91.375,39.433)(.033285,.212113){7}{\line(0,1){.212113}}
\multiput(91.608,40.918)(.030364,.248646){6}{\line(0,1){.248646}}
\multiput(91.79,42.41)(.03279,.37431){4}{\line(0,1){.37431}}
\put(91.921,43.907){\line(0,1){1.5008}}
\put(92.001,45.408){\line(0,1){1.5923}}
\put(62.292,47.25){\line(0,1){.6193}}
\put(62.276,47.869){\line(0,1){.6176}}
\put(62.227,48.487){\line(0,1){.6142}}
\put(62.146,49.101){\line(0,1){.6091}}
\multiput(62.033,49.71)(-.029024,.120458){5}{\line(0,1){.120458}}
\multiput(61.888,50.312)(-.029425,.098973){6}{\line(0,1){.098973}}
\multiput(61.711,50.906)(-.029642,.083392){7}{\line(0,1){.083392}}
\multiput(61.504,51.49)(-.029733,.071505){8}{\line(0,1){.071505}}
\multiput(61.266,52.062)(-.033447,.069845){8}{\line(0,1){.069845}}
\multiput(60.998,52.621)(-.03295,.060437){9}{\line(0,1){.060437}}
\multiput(60.702,53.165)(-.032471,.052761){10}{\line(0,1){.052761}}
\multiput(60.377,53.692)(-.031997,.046348){11}{\line(0,1){.046348}}
\multiput(60.025,54.202)(-.031522,.040887){12}{\line(0,1){.040887}}
\multiput(59.647,54.693)(-.033625,.039175){12}{\line(0,1){.039175}}
\multiput(59.243,55.163)(-.032895,.0344817){13}{\line(0,1){.0344817}}
\multiput(58.816,55.611)(-.0346605,.0327065){13}{\line(-1,0){.0346605}}
\multiput(58.365,56.036)(-.039358,.033411){12}{\line(-1,0){.039358}}
\multiput(57.893,56.437)(-.041058,.031298){12}{\line(-1,0){.041058}}
\multiput(57.4,56.813)(-.046522,.031744){11}{\line(-1,0){.046522}}
\multiput(56.888,57.162)(-.052938,.032183){10}{\line(-1,0){.052938}}
\multiput(56.359,57.484)(-.060616,.03262){9}{\line(-1,0){.060616}}
\multiput(55.814,57.778)(-.070026,.033066){8}{\line(-1,0){.070026}}
\multiput(55.253,58.042)(-.081904,.033534){7}{\line(-1,0){.081904}}
\multiput(54.68,58.277)(-.083552,.029187){7}{\line(-1,0){.083552}}
\multiput(54.095,58.481)(-.099132,.028885){6}{\line(-1,0){.099132}}
\multiput(53.5,58.654)(-.120614,.028366){5}{\line(-1,0){.120614}}
\put(52.897,58.796){\line(-1,0){.6097}}
\put(52.288,58.906){\line(-1,0){.6146}}
\put(51.673,58.984){\line(-1,0){.6179}}
\put(51.055,59.029){\line(-1,0){1.2386}}
\put(49.816,59.023){\line(-1,0){.6173}}
\put(49.199,58.971){\line(-1,0){.6137}}
\put(48.585,58.886){\line(-1,0){.6085}}
\multiput(47.977,58.769)(-.120298,-.02968){5}{\line(-1,0){.120298}}
\multiput(47.375,58.621)(-.098811,-.029964){6}{\line(-1,0){.098811}}
\multiput(46.783,58.441)(-.083229,-.030096){7}{\line(-1,0){.083229}}
\multiput(46.2,58.231)(-.071342,-.030122){8}{\line(-1,0){.071342}}
\multiput(45.629,57.99)(-.061922,-.030069){9}{\line(-1,0){.061922}}
\multiput(45.072,57.719)(-.060257,-.033279){9}{\line(-1,0){.060257}}
\multiput(44.53,57.419)(-.052583,-.032758){10}{\line(-1,0){.052583}}
\multiput(44.004,57.092)(-.046173,-.032249){11}{\line(-1,0){.046173}}
\multiput(43.496,56.737)(-.040715,-.031744){12}{\line(-1,0){.040715}}
\multiput(43.007,56.356)(-.0359919,-.0312354){13}{\line(-1,0){.0359919}}
\multiput(42.539,55.95)(-.0343018,-.0330825){13}{\line(-1,0){.0343018}}
\multiput(42.094,55.52)(-.0325171,-.0348383){13}{\line(0,-1){.0348383}}
\multiput(41.671,55.067)(-.033196,-.039539){12}{\line(0,-1){.039539}}
\multiput(41.272,54.593)(-.031074,-.041228){12}{\line(0,-1){.041228}}
\multiput(40.9,54.098)(-.03149,-.046694){11}{\line(0,-1){.046694}}
\multiput(40.553,53.584)(-.031894,-.053112){10}{\line(0,-1){.053112}}
\multiput(40.234,53.053)(-.032289,-.060793){9}{\line(0,-1){.060793}}
\multiput(39.944,52.506)(-.032683,-.070206){8}{\line(0,-1){.070206}}
\multiput(39.682,51.944)(-.033087,-.082086){7}{\line(0,-1){.082086}}
\multiput(39.451,51.37)(-.033519,-.097662){6}{\line(0,-1){.097662}}
\multiput(39.249,50.784)(-.028344,-.099288){6}{\line(0,-1){.099288}}
\multiput(39.079,50.188)(-.027708,-.120767){5}{\line(0,-1){.120767}}
\put(38.941,49.584){\line(0,-1){.6103}}
\put(38.834,48.974){\line(0,-1){.615}}
\put(38.76,48.359){\line(0,-1){1.2375}}
\put(38.708,47.121){\line(0,-1){.6191}}
\put(38.731,46.502){\line(0,-1){.617}}
\put(38.787,45.885){\line(0,-1){.6133}}
\put(38.875,45.272){\line(0,-1){.6078}}
\multiput(38.995,44.664)(.030335,-.120134){5}{\line(0,-1){.120134}}
\multiput(39.146,44.064)(.030502,-.098646){6}{\line(0,-1){.098646}}
\multiput(39.329,43.472)(.030549,-.083064){7}{\line(0,-1){.083064}}
\multiput(39.543,42.89)(.030511,-.071177){8}{\line(0,-1){.071177}}
\multiput(39.787,42.321)(.030406,-.061757){9}{\line(0,-1){.061757}}
\multiput(40.061,41.765)(.033607,-.060075){9}{\line(0,-1){.060075}}
\multiput(40.363,41.224)(.033044,-.052404){10}{\line(0,-1){.052404}}
\multiput(40.694,40.7)(.032501,-.045997){11}{\line(0,-1){.045997}}
\multiput(41.051,40.194)(.031965,-.040541){12}{\line(0,-1){.040541}}
\multiput(41.435,39.708)(.0314312,-.0358211){13}{\line(0,-1){.0358211}}
\multiput(41.843,39.242)(.033269,-.034121){13}{\line(0,-1){.034121}}
\multiput(42.276,38.799)(.0350151,-.0323267){13}{\line(1,0){.0350151}}
\multiput(42.731,38.378)(.03972,-.03298){12}{\line(1,0){.03972}}
\multiput(43.208,37.983)(.04516,-.033653){11}{\line(1,0){.04516}}
\multiput(43.704,37.612)(.046865,-.031235){11}{\line(1,0){.046865}}
\multiput(44.22,37.269)(.053285,-.031604){10}{\line(1,0){.053285}}
\multiput(44.753,36.953)(.060968,-.031957){9}{\line(1,0){.060968}}
\multiput(45.302,36.665)(.070383,-.0323){8}{\line(1,0){.070383}}
\multiput(45.865,36.407)(.082265,-.032639){7}{\line(1,0){.082265}}
\multiput(46.44,36.178)(.097843,-.032986){6}{\line(1,0){.097843}}
\multiput(47.028,35.98)(.119329,-.033362){5}{\line(1,0){.119329}}
\multiput(47.624,35.814)(.120916,-.02705){5}{\line(1,0){.120916}}
\put(48.229,35.678){\line(1,0){.6108}}
\put(48.84,35.575){\line(1,0){.6154}}
\put(49.455,35.504){\line(1,0){.6183}}
\put(50.073,35.465){\line(1,0){.6195}}
\put(50.693,35.459){\line(1,0){.619}}
\put(51.312,35.486){\line(1,0){.6167}}
\put(51.929,35.544){\line(1,0){.6128}}
\multiput(52.541,35.636)(.15179,.03081){4}{\line(1,0){.15179}}
\multiput(53.148,35.759)(.119967,.03099){5}{\line(1,0){.119967}}
\multiput(53.748,35.914)(.098478,.031039){6}{\line(1,0){.098478}}
\multiput(54.339,36.1)(.082896,.031001){7}{\line(1,0){.082896}}
\multiput(54.919,36.317)(.071009,.030898){8}{\line(1,0){.071009}}
\multiput(55.488,36.564)(.06159,.030742){9}{\line(1,0){.06159}}
\multiput(56.042,36.841)(.053901,.030541){10}{\line(1,0){.053901}}
\multiput(56.581,37.146)(.052223,.033329){10}{\line(1,0){.052223}}
\multiput(57.103,37.48)(.045819,.032751){11}{\line(1,0){.045819}}
\multiput(57.607,37.84)(.040366,.032186){12}{\line(1,0){.040366}}
\multiput(58.091,38.226)(.0356492,.031626){13}{\line(1,0){.0356492}}
\multiput(58.555,38.637)(.0339391,.0334545){13}{\line(1,0){.0339391}}
\multiput(58.996,39.072)(.0321353,.0351908){13}{\line(0,1){.0351908}}
\multiput(59.414,39.53)(.032763,.039899){12}{\line(0,1){.039899}}
\multiput(59.807,40.008)(.033406,.045343){11}{\line(0,1){.045343}}
\multiput(60.175,40.507)(.030979,.047035){11}{\line(0,1){.047035}}
\multiput(60.515,41.025)(.031313,.053457){10}{\line(0,1){.053457}}
\multiput(60.828,41.559)(.031624,.061142){9}{\line(0,1){.061142}}
\multiput(61.113,42.109)(.031916,.070558){8}{\line(0,1){.070558}}
\multiput(61.368,42.674)(.03219,.082442){7}{\line(0,1){.082442}}
\multiput(61.594,43.251)(.032452,.098022){6}{\line(0,1){.098022}}
\multiput(61.788,43.839)(.032711,.119509){5}{\line(0,1){.119509}}
\multiput(61.952,44.437)(.03299,.15133){4}{\line(0,1){.15133}}
\put(62.084,45.042){\line(0,1){.6114}}
\put(62.184,45.653){\line(0,1){.6158}}
\put(62.252,46.269){\line(0,1){.9809}}
\multiput(58.25,90.25)(.033482143,-.357142857){112}{\line(0,-1){.357142857}}
\multiput(51.75,35.5)(-.1091644205,-.0336927224){371}{\line(-1,0){.1091644205}}
\put(48.5,49){\makebox(0,0)[cc]{$\Pi$}}
\put(26.5,63.5){\makebox(0,0)[cc]{$\Phi_N(\Pi)$}}
\put(12,77){\makebox(0,0)[cc]{$p_1$}}
\put(91,29){\makebox(0,0)[cc]{$p_2$}}
\put(37,51.25){\makebox(0,0)[cc]{$s_1$}}
\put(63,40.5){\makebox(0,0)[cc]{$s_2$}}
\put(28.5,30.75){\makebox(0,0)[cc]{$q_2$}}
\put(63,70.5){\makebox(0,0)[cc]{$q_1$}}
\put(46.75,94.75){\vector(4,-1){.07}}\qbezier(22,90.25)(32.625,98.5)(46.75,94.75)
\put(58.5,65.5){\vector(0,-1){.07}}\qbezier(57.5,77.5)(58,71)(58.5,65.5)
\put(30.5,26.25){\vector(-3,-1){.07}}\qbezier(42.75,30.25)(37.375,28.25)(30.5,26.25)
\put(51.75,61.5){\vector(3,-1){.07}}\qbezier(43,59.5)(47.125,63.5)(51.75,61.5)
\put(62.25,23){\makebox(0,0)[cc]{$\Delta_1$}}
\end{picture}

\end{center}

We first prove part 1.  By Lemma \ref{barY} and the fact that $q_1,q_2$ are $\hat{\Theta}$ bands, every $\bar{Y}$ band in $\Phi_N(\Pi)$ that starts on $s_1$ must end either on $p_1$ or on a transition 2-cell of $\Phi_N(\Pi)$.  Therefore every $\hat{A}$ letter that labels an edge of $\Phi_N(\Pi)$ either appears in $\text{Lab}(p_1)$ (a subword of $w$) or appears in an element of $\mathcal{T}(w)$.\\

By Lemma \ref{kappahub}, $s_1$ contains more than $\frac{1}{2}$ of the length of $\partial\Pi$.  Thus, by the definition of a disc label, every $\hat{A}$ letter that labels an edge of $s_2$ also labels an edge of $s_1$ (and thus labels an edge of $\Phi_N(\Pi)$).  Therefore every non-$\hat{A}$ letter that labels an edge of $s_2$ either appears in $\text{Lab}(p_1)$ or appears in an element of $\mathcal{T}(w)$.\\

Let $\Delta_1$ be the diagram obtained by deleting $\Phi_N(\Pi)$ and $\Pi$ from $\Delta$.  Then $\partial\Delta_1=p_2 q_2^{-1} s_2^{-1} q_1^{-1}$.  By the induction hypothesis, every $\hat{A}$ letter that labels an edge in $\Delta_1$ appears either in $\text{Lab}(\partial\Delta_1)$ or in $\mathcal{T}(\text{Lab}(\partial\Delta_1))$.  Since $\Delta$ contains no $\hat{\Theta}$ annuli, $\mathcal{T}(\text{Lab}(\partial\Delta_1)) \subseteq \mathcal{T}(w)$.  Therefore every $\hat{A}$ letter that appears in $\Delta_1$ appears either in $w$ or in an element of $\mathcal{T}(w)$.\\

Since every $\hat{A}$ letter that labels an edge of $\Delta$ also labels an edge of either $\Delta_1$ or $\Phi_N(\Pi)$, part 1 is proved.\\

To prove part 2, note that, since $\Phi_N(\Pi)$ contains no discs, every $\hat{Q}$ edge in $s_1$ is contained either in the boundary of $\Delta$ or in the boundary of a transition 2-cell of $\Delta$.  Since $|s_1|\geq \frac{1}{2}|\partial\Pi|$, by the definition of a disc label every $\hat{Q}$ letter that labels an edge of $\Pi$ labels an edge of $s_1$.  It follows from the induction hypothesis and the fact that $q_1,q_2$ are $\hat{\Theta}$ paths that every $\hat{Q}$ letter that labels an edge of $\Delta_1$ either appears in $p_1$, $s_2$, or in an element of $\mathcal{T}(\text{Lab}(\partial\Delta)\subseteq \mathcal{T}(w)$.  This proves part 2.\\

\end{proof}

\begin{lemma}\label{Psolv}

If the language accepted by $M_{\infty}$ is decidable, then the word problem is solvable for $P(M_{\infty})$.

\end{lemma}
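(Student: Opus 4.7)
The plan is to reduce the word problem for $P(M_{\infty})$ to the (now decidable) configuration problem for $S(M'_{\infty})$, obtained by chaining Lemmas \ref{symsol} and \ref{smachsol}. Since $M_{\infty}$ is c.e., so are $M'_{\infty}$, $S(M'_{\infty})$, and $P(M_{\infty})$ (Observation \ref{symce}, Lemmas \ref{smachce} and \ref{presce}), so triviality of a word in $P(M_{\infty})$ is semi-decidable by enumerating diagrams. The substantive task is to also certify non-triviality effectively.

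Given an input word $w$, I would first assemble, via Lemmas \ref{mathcalt} and \ref{mathcala}, a finite alphabet
\[
\Gamma(w) := \mathcal{A}(w) \cup \mathcal{Q}(w) \cup \hat{\Theta}(w) \cup \{\kappa_1,\dots,\kappa_{2N}\},
\]
where $\hat{\Theta}(w)$ denotes the finite set of command letters of $S(M'_{\infty})$ appearing in $w$ or in some relator of $\mathcal{T}(w)$. Lemma \ref{mathcala} shows that every $\hat{A}$- or $\hat{Q}$-edge of a minimal $P_D(M_{\infty})$-diagram $\Delta$ for $w$ is labeled in $\Gamma(w)$; the analogous statement for $\hat{\Theta}$-letters follows from Lemma \ref{noThetaAn}, since every $\hat{\Theta}$-band in $\Delta$ must start and end on $\partial\Delta$ or on a transition 2-cell, hence carries a letter already present in $w$ or in $\mathcal{T}(w)$. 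Lemma \ref{mindiagbounds} bounds the area of $\Delta$ by $O(|w|^3)$ and ensures that every disc $\Pi$ in $\Delta$ has $|\partial\Pi| = O(|w|^2)$, so the admissible word $W$ for which $\partial\Pi$ is labeled $K(W)$ satisfies $|W| \le c|w|^2$ for some computable constant $c$.

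Next, I would use the decidable configuration problem for $S(M'_{\infty})$ to compute the finite set
\[
\mathcal{D}(w) := \{\, K(W) : W \text{ admissible over } \Gamma(w),\ |W| \le c|w|^2,\ W \text{ accepted by } S(M'_{\infty})\,\},
\]
which, by the definition of a disc label together with the equivalence $K(W)=1$ in $P(M_{\infty}) \iff W$ is accepted by $S(M'_{\infty})$, contains the boundary label of every disc in $\Delta$. Assemble the finite presentation
\[
P_{\mathrm{fin}}(w) := \langle\, \Gamma(w) \ \|\ \mathcal{T}(w),\ \text{auxiliary relators on } \Gamma(w),\ \mathcal{D}(w)\,\rangle.
\]
By the constraints above, $w$ is trivial in $P(M_{\infty})$ if and only if $w$ is trivial in $P_{\mathrm{fin}}(w)$, and in that case it admits a $P_{\mathrm{fin}}(w)$-diagram of area at most $c'|w|^3$. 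Triviality can therefore be decided by exhaustively enumerating all $P_{\mathrm{fin}}(w)$-diagrams with boundary length $|w|$ and area bounded by the explicit cubic constant.

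The main obstacle is isolating $\mathcal{D}(w)$ effectively: this step simultaneously requires the alphabet control of Lemma \ref{mathcala}, the quadratic disc-perimeter bound of Lemma \ref{mindiagbounds}, and the correspondence between disc labels and accepted admissible words (Proposition 4.1 of \cite{BRS}), so that membership of $K(W)$ in the disc-label set is reduced to finitely many queries to the decidable configuration problem for $S(M'_{\infty})$.
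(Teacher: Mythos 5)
Your proposal is correct and takes essentially the same route as the paper's proof: both use Lemmas \ref{mathcalt} and \ref{mathcala} to confine the alphabet, Lemma \ref{mindiagbounds} to bound disc perimeters and cell counts, Lemmas \ref{symsol} and \ref{smachsol} to decide which $K(W)$ are genuine disc labels, and then perform a bounded exhaustive search over diagrams built from the resulting finite relator set $\mathcal{R}(w)$ (your $P_{\mathrm{fin}}(w)$). The only cosmetic difference is that you package the relators into an explicit auxiliary finite presentation, whereas the paper works directly with the set $\mathcal{R}(w)$; the opening remark about semi-decidability via c.e.-ness is unused and unnecessary, since the lemma requires only that $L$ be decidable and the argument is a direct decision procedure rather than two dovetailed semi-decision procedures.
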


\begin{proof}

Let $w$ be an arbitrary word in $P(M_{\infty})$.  By Lemma \ref{mindiagbounds}, if $\Delta$ is a minimal $P_D(M_{\infty})$ diagram for $w$, then the sum of the boundary lengths of the discs in $\Delta$ does not exceed $O(|w|^2)$ and the number of transition and auxiliary 2-cells in $\Delta$ does not exceed $O(|w|^3)$.  To prove Lemma \ref{Psolv}, it will be sufficient to show that we can effectively construct from $w$ a finite set of $P_D(M_{\infty})$ relators $\mathcal{R}(w)$ such that if $w$ is trivial in $P(M_{\infty})$ with minimal $P_D(M_{\infty})$ diagram $\Delta$, then the label of every 2-cell in $\Delta$ is contained in $\mathcal{R}(w)$.\\

By Lemma \ref{mathcalt} we can effectively construct from $w$ the finite set $\mathcal{T}(w)$ of transition relators.  Recall that if $w$ is trivial in $P(M_{\infty})$ with minimal $P_D(M_{\infty})$ diagram $\Delta$, then the boundary label of every transition 2-cell in $\Delta$ is in $\mathcal{T}(w)$.  It remains to construct analogous finite sets of auxiliary relators and disc labels.\\

We effectively construct $\mathcal{Q}(w)$, and $\mathcal{A}(w)$ from $w$, as in Lemma \ref{mathcala}.  If $w$ is trivial in $P(M_{\infty})$ with minimal $P_D(M_{\infty})$ diagram $\Delta$, then every auxiliary relator that labels a 2-cell of $\Delta$ is of the form $xyx^{-1}y^{-1}$, where $x$ is a $\hat{\Theta}$ letter that appears in an element of $\mathcal{T}(w)$ and $y$ is either an $\hat{A}$ letter in $\mathcal{A}(w)$ or a $\kappa$ letter.  Therefore we can effectively construct from $w$ a finite set $\mathcal{R}_1(w)$ of auxiliary relators of $P_D(M_{\infty})$ such that if $w$ is trivial in $P(M_{\infty})$ with minimal $P_D(M_{\infty})$ diagram $\Delta$, then the label of every auxiliary 2-cell in $\Delta$ is contained in $\mathcal{R}_1(w)$.\\

There are finitely many admissible words of $S(M'_{\infty})$ whose length does not exceed $O(|w|^2)$, whose $\hat{A}$ letters are elements of $\mathcal{A}(w)$, and whose $\hat{Q}$ letters are elements of $\mathcal{Q}(w)$.  We can effectively construct this finite set of admissible words.  For each such admissible word $W$, $K(W)$ is a disc label of $P(M_{\infty})$ if and only if $W$ is an acceptable admissible word of $S(M'_{\infty})$.  By Lemmas \ref{symsol} and \ref{smachsol}, it is decidable whether $W$ is an acceptable admissible word of $S(M'_{\infty})$.  Therefore, we can effectively construct from $w$ a finite set $\mathcal{R}_2(w)$ of disc labels of $P_D(M_{\infty})$ such that if $w$ is trivial in $P(M_{\infty})$ with minimal $P_D(M_{\infty})$ diagram $\Delta$, then the label of every disc in $\Delta$ is contained in $\mathcal{R}_2(w)$.\\

We set $\mathcal{R}(w)=\mathcal{T}(w)\cup \mathcal{R}_1(w)\cup \mathcal{R}_2(w)$ to complete the proof.\\

\end{proof}

\begin{lemma}\label{minimal1}

The presentation $P(M_{\infty})$ is a minimal presentation.

\end{lemma}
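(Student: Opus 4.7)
The plan is to verify minimality in three stages, exploiting the iterated HNN structure of $P(M_\infty)$. For each class of relators I will show that deleting any representative (together with its cyclic conjugates and inverse) strictly shrinks the normal closure.

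First, by Lemma \ref{hnn}, $S(M'_\infty)$ is an HNN extension of $F(\hat A\cup\hat Q)$ with stable letters $\hat\Theta$, and the proof of that lemma produces a Nielsen-reduced (hence freely generating) set of left-hand sides $\mathbf x_{i,\ell}$ and right-hand sides $\mathbf y_{i,\ell}$ of the defining relators in each $H_i$. If I delete one relator $r_0 = (\rho_{i_0}^{-1}\mathbf x_{i_0,\ell_0}\rho_{i_0}=\mathbf y_{i_0,\ell_0})$ along with its cyclic conjugates and inverse, what remains still defines an HNN extension of $F(\hat A\cup\hat Q)$, but now with the free generator $\mathbf x_{i_0,\ell_0}$ removed from the domain associated subgroup. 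Britton's lemma applied to this reduced HNN extension shows that $\rho_{i_0}^{-1}\mathbf x_{i_0,\ell_0}\rho_{i_0}\mathbf y_{i_0,\ell_0}^{-1}$ is non-trivial, so $r_0$ is essential.

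Second, passing from $S(M'_\infty)$ to the sub-presentation $P^\dagger$ of $P(M_\infty)$ obtained by omitting only the hub relator amounts to adjoining the letters $\kappa_1,\dots,\kappa_{2N}$ together with the commuting relators $\rho\kappa_i\rho^{-1}\kappa_i^{-1}$. I view this as an iterated HNN extension in which each $\kappa_i$ is a stable letter with both associated subgroups equal to $\langle\hat\Theta\rangle\le S(M'_\infty)$ (which is free by Britton's lemma applied to the first-stage extension) and with trivial associated automorphism. The same argument as in the first stage then shows that each commuting relator is essential.

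Third, I will show the hub relator $K(W_0)$ is non-trivial in $P^\dagger$. Since $W_0$ is an admissible word of $S(M'_\infty)$, it contains a $\hat Q$-letter, so in particular $W_0\notin\langle\hat\Theta\rangle$. Expanding the defining formula (\ref{kdef}), each $\kappa_i^{\pm 1}$ that appears in $K(W_0)$ is flanked by $W_0^{\pm 1}$-syllables whose reduced forms in $S(M'_\infty)$ carry $\hat A\cup\hat Q$-letters. Applying Britton's lemma to the iterated $\kappa$-HNN extension that defines $P^\dagger$ over $S(M'_\infty)$, I will verify that $K(W_0)$ contains no pinch $\kappa_i^{\pm 1}h\kappa_i^{\mp 1}$ with $h\in\langle\hat\Theta\rangle$, whence $K(W_0)\ne 1$ in $P^\dagger$ and the hub relator is essential.

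The main obstacle is the third stage. While the formula for $K(W_0)$ is explicit and its $W_0^{\pm 1}$-blocks manifestly carry content outside $\langle\hat\Theta\rangle$, verifying the absence of Britton pinches requires peeling off the $\kappa$-stable letters one at a time and checking at each step that no previously hidden pinch is created for an inner $\kappa_i$ once outer layers have been analysed. This bookkeeping, though routine once set up, is the technical heart of the argument.
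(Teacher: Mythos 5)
Your approach is genuinely different from the paper's. The paper argues geometrically: it cites structural lemmas from \cite{BORS} about minimal diagrams over the disc-based presentation $P_D(M_{\infty})$ (absence of $\hat\Theta$-annuli, $\kappa$-bands emanating from discs and reaching the boundary) and shows that a hypothetical van~Kampen diagram with boundary label the deleted relator, built from the remaining relators, cannot exist. You instead try to run Britton's lemma along a chain of HNN extensions. The difference matters here, and there is a genuine gap in your Stages~1 and~2.

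To establish minimality you must show, for each relator $r$ of $P(M_{\infty})$, that $r$ is non-trivial in the group presented by the \emph{full} relator set of $P(M_{\infty})$ minus $r$ — in particular with the hub relator $K(W_0)$ still present. In Stage~1 you delete an $S(M'_{\infty})$-relator $r_0$ and apply Britton's lemma to the shrunken HNN extension of $F(\hat A\cup\hat Q)$; this shows $r_0\neq 1$ in $\langle\,\text{gens of }S(M'_{\infty})\,\|\,R_{S(M'_{\infty})}\setminus\{r_0\}\,\rangle$. Adjoining the $\kappa$-letters with their commuting relators is still an HNN tower, so the base embeds and $r_0$ remains non-trivial there too. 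But the hub relator is \emph{not} an HNN step — it turns $P^{\dagger}$ into a proper quotient — and Britton's lemma gives you no control over which elements die in that quotient. Your argument therefore establishes $r_0\notin\langle\langle R_{S(M'_{\infty})}\cup R_{\kappa}\setminus\{r_0\}\rangle\rangle$, which is strictly weaker than the needed statement $r_0\notin\langle\langle R_{P(M_{\infty})}\setminus\{r_0\}\rangle\rangle$. Stage~2 has the identical gap for the $\kappa$-commuting relators. The paper closes precisely this gap by observing (via Lemma~\ref{kappahub}) that any disc in a minimal $P_D(M_{\infty})$-diagram would force at least $4N-6$ $\kappa$-edges on the boundary, while the boundary word $\rho\mathbf{x}_{\ell}\rho^{-1}\mathbf{y}_{\ell}^{-1}$ carries at most two; so the hub simply cannot participate, and only then does the free-generation argument take over. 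Some diagrammatic (or otherwise) control of the hub's contribution is unavoidable.

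Stage~3 is the one piece where your target statement (\,$K(W_0)\neq 1$ in $P^{\dagger}$\,) is exactly what is required, since deleting the hub leaves precisely $P^{\dagger}$. The bookkeeping you defer is in fact lighter than you suggest: in $K(W_0)$ each $\kappa_j$ with $j<2N$ occurs exactly once between the outermost occurrences of $\kappa_{2N}$ and $\kappa_{2N}^{-1}$, so Britton's lemma in $G_{2N-1}$ already forbids a pinch at the top level, and $K(W_0)$ has positive $\kappa_{2N}$-length in $G_{2N}=P^{\dagger}$. However, fixing Stages~1 and~2 would require importing the disc/band machinery that your proposal was designed to avoid.
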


\begin{proof}

Consider any non-hub relator $\rho \textbf{x}_{\ell} \rho^{-1}\textbf{y}_{\ell}^{-1}$ of $P(M_{\infty})$, where $\rho\in \hat{\Theta}$.  Suppose we remove this relator from $P(M_{\infty})$ and then attempt to construct a minimal $P_D(M_{\infty})$ diagram $\Delta$ with boundary label $\rho \textbf{x}_{\ell} \rho^{-1}\textbf{y}_{\ell}^{-1}$ using 2-cells labeled by the remaining relators.  If $\Delta$ contains at least one hub, then there are at least $(4N-3)$ $\kappa$-edges in $\partial\Delta$, by Lemma \ref{kappahub}.  Therefore $\Delta$ contains no hubs.  By Lemma \ref{noThetaAn}, $\Delta$ contains no $\hat{\Theta}$ annuli.  Therefore $\Delta$ consists of a single $\rho$ band.  By the proof of Lemma \ref{hnn}, and the fact that no $\kappa$ letters appear in the relators of $S(M_{\infty})$, the words that appear on the tops of the $\rho$ relators of $P(M_{\infty})$ freely generate a subgroup of $F(\hat{Q}\cup \bar{Y})$.  Therefore $\Delta$ cannot exist.\\

If we remove the hub relator from $P(M_{\infty})$ then the only $P_D(M_{\infty})$ diagrams we can make using 2-cells labeled by the remaing relators contain no hubs.  Such diagrams contain no $\hat{\Theta}$ annuli by Lemma \ref{noThetaAn}.  Since disc labels contain no $\hat{\Theta}$ edges, it is impossible to construct a $P(M_{\infty})$ diagram whose boundary label is a disc label without using hub relators.  We conclude that $P(M_{\infty})$ is a minimal presentation.\\

\end{proof}

\section{Construction of $P'_1(M_{\infty})$ and Proof of Theorem \ref{main}}

For the purposes of this section, it would be convenient if every relator of $P(M_{\infty})$ were either a strictly positive word or a strictly negative word.  We begin by proving that $P(M_{\infty})$ can be transformed into a presentation $P_1(M{_\infty})$ in which every relator is either a strictly positive or a strictly negative word such that $P_1(M{_\infty})$ inherits all the desirable properties of $P(M_{\infty})$.\\

If $P(M_{\infty})=\langle X \| R\rangle$, then we construct $P_1(M{_\infty})$ by adding new generators and relators to $P_1(M{_\infty})$ as follows.  Let $\tilde{X}$ be a set of symbols that is in bijective correspondence with $X$.  For $g\in X$, let the corresponding element of $\tilde{X}$ be denoted $\tilde{g}$.  The generating set of $P_1(M{_\infty})$ is $Z= X\cup \tilde{X}$.  Let $\tilde{R}$ denote the set of relators obtained by replacing each negative letter $g^{-1}$ appearing in each relator of $R$ with the positive letter $\tilde{g}$.  The relator set of $P_1(M{_\infty})$ is obtained by taking the closure of $\tilde{R}\cup \{g \tilde{g}|g\in X\}$ under inverses and cyclic shifts.  This completes the construction of $P_1(M{_\infty})$, in which every relator is either strictly positive or strictly negative.  For every word $w$ in $P_1(M{_\infty})$, there exists a strictly positive word $w_p$ in $P_1(M{_\infty})$, which is obtained by replacing every negative letter $g^{-1}$ (or $\tilde{g}^{-1}$) in $w$ with the positive letter $\tilde{g}$ (or $g$).  We call $w_p$ the {\em strictly positive word in $P_1(M{_\infty})$ representing $w$}.  We state the following lemma without proof.

\begin{lemma}\label{obvious}

\begin{enumerate}

\item
The map $\iota$ given by $\iota(g)= g$, $\iota(\tilde{g})= g$ defines an isomorphism from $P(M_{\infty})$ to $P_1(M_{\infty})$.

\item
The word problem for $P(M_{\infty})$ is solvable if and only if the word problem for $P_1(M_{\infty})$ is solvable.

\item
The presentation $P(M_{\infty})$ is c.e. if and only if the presentation $P_1(M_{\infty})$ is c.e..

\item
The presentation $P(M_{\infty})$ is minimal if and only if the presentation $P_1(M_{\infty})$ is minimal.

\item
If $w_1$ is a trivial word in $P_1(M_{\infty})$ and $\Delta_1$ is a minimal area $P_1(M_{\infty})$ diagram for $w_1$ with area $n$, then a minimal area $P(M_{\infty})$ diagram $\Delta$ for $\iota(w_1)$ has area between $n$ and $n+|w_1|$.

\item
The Dehn fucntion for $P(M_{\infty})$ is equivalent to the Dehn function for  $P_1(M_{\infty})$.

\end{enumerate}

\end{lemma}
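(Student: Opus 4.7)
The plan is to establish the six parts in sequence: parts 1--3 are essentially Tietze bookkeeping; parts 5--6 reduce to a diagrammatic conversion procedure; part 4 is the subtle one and constitutes the main obstacle.

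For part 1, I would define $\iota^{-1}: P_1(M_\infty) \to P(M_\infty)$ on generators by $g \mapsto g$ and $\tilde g \mapsto g^{-1}$, checking it is well-defined by verifying that each $\tilde R$-relator maps to the corresponding $r \in R$ (trivial in $P(M_\infty)$) and each $g\tilde g$ maps to $gg^{-1} = 1$. In the other direction, $\iota: P(M_\infty) \to P_1(M_\infty)$ sends $g$ to $g$; this is well-defined because the $g\tilde g$-relators of $P_1(M_\infty)$ force $\tilde g = g^{-1}$, so each $r \in R$ coincides, as a word in $P_1(M_\infty)$, with the relator $\tilde r$ and is therefore trivial. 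The composites are identities by inspection on generators. Part 2 then follows because the length-preserving substitution $\tilde g \leftrightarrow g^{-1}$ is an effective bijection between words in the two presentations that preserves triviality. Part 3 follows because the generating set $\tilde X$ and the relator sets $\tilde R$ and $\{g\tilde g : g \in X\}$ are obtained from $X$ and $R$ by a simple computable substitution, and conversely one recovers $X$ and $R$ effectively from the $P_1$-data.

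Part 4 is the obstacle. The forward direction splits by relator type. If I remove some $\tilde r$ and suppose $\tilde r$ is expressed as a product of conjugates of the remaining $P_1$-relators, I apply $\iota^{-1}$: the images of the surviving $\tilde R$-relators lie in $R \setminus \{r\}$ and the $g\tilde g$-images vanish, giving $r$ as a product of conjugates of $R \setminus \{r\}$, contradicting the minimality of $P(M_\infty)$ established in Lemma \ref{minimal1}. For the harder subcase of removing some $g\tilde g$, I would Tietze-eliminate each remaining $\tilde h$ via $\tilde h = h^{-1}$, obtaining a presentation on $X \cup \{\tilde g\}$ whose relators come from $R$ by replacing each occurrence of $g^{-1}$ with $\tilde g$. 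I would then invoke the HNN-extension analysis used in the proof of Lemma \ref{minimal1}, together with the Nielsen-reduced freeness from Lemma \ref{hnn}, to show that $\tilde g$ is a genuinely new element not forced to equal $g^{-1}$ in the resulting group, so $g\tilde g$ remains non-trivial. The reverse implication is symmetric via $\iota$.

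For part 5, given a minimal $P_1(M_\infty)$ diagram $\Delta_1$ for $w_1$ of area $n$, I would perform the following surgery: relabel each 2-cell bearing a relator $\tilde r$ by the corresponding $r$ after substituting $\tilde g \leftrightarrow g^{-1}$ on its boundary edges, and collapse each 2-cell labelled by some $g\tilde g$. To reconcile the rewritten interior with the required outer boundary $\iota^{-1}(w_1)$, at most $|w_1|$ extra $g\tilde g$-cells must be inserted along the boundary, yielding an area bound of $n + |w_1|$; the lower bound $n$ follows from the minimality of $\Delta_1$ and the observation that the conversion cannot produce a $P(M_\infty)$-diagram of strictly smaller area. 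Part 6 then follows by combining part 5 with the reverse conversion, in which each $r$-cell of a $P$-diagram is replaced by one $\tilde r$-cell plus $O(|r|)$ auxiliary $g\tilde g$-cells. Since all relators of $P(M_\infty)$ have uniformly bounded length (transition and auxiliary relators by Lemma \ref{rellength}, and the hub by a fixed constant depending on $N$ and the accept word $W_0$), the reverse conversion inflates area by at most a constant factor, yielding both $f_{P_1} \preceq f_P$ and $f_P \preceq f_{P_1}$ and hence the equivalence claimed in part 6.
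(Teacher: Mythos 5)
The paper gives no argument to compare against: Lemma~\ref{obvious} is introduced with the sentence ``We state the following lemma without proof.'' So your proposal cannot be matched against the authors' own reasoning, only checked on its merits, and on that score parts 1, 2, 3 and the overall conversion strategy for parts 5--6 are fine, but part 4 --- the part you yourself single out as the obstacle --- has a real gap that goes beyond being merely sketchy.

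The problem is in your case (a), and it stems from the fact that relator sets are by definition closed under inverses and cyclic shifts, so ``removing $\tilde r$'' must mean removing the entire $(\text{inverse, cyclic shift})$-class of $\tilde r$. Now observe that for a relator $r$ that is neither wholly positive nor wholly negative, $\tilde r$ and $\widetilde{r^{-1}}$ both lie in the $P_1(M_\infty)$ relator set (since $r^{-1}\in R$) but typically lie in \emph{different} equivalence classes --- $\tilde r$ and $\widetilde{r^{-1}}$ are both positive words, while the inverse of $\tilde r$ is a negative word, so $\widetilde{r^{-1}}$ is in the class of $\tilde r$ only if it happens to be a cyclic shift of $\tilde r$, which is far from automatic. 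If you delete the class of $\tilde r$ but keep the class of $\widetilde{r^{-1}}$, then under the substitution $\tilde g\mapsto g^{-1}$ the surviving relators still push forward onto the full class of $r$ (since $\widetilde{r^{-1}}\mapsto r^{-1}$), so your claim ``the images of the surviving $\tilde R$-relators lie in $R\setminus\{r\}$'' fails, and no contradiction with the minimality of $P(M_\infty)$ is obtained. This is not a hypothetical worry: for $P=\langle a,b\,\|\,aba^{-1}b^{-1}\rangle$ (minimal) the construction as written produces two distinct $\tilde R$-classes, $[ab\tilde a\tilde b]$ and $[ba\tilde b\tilde a]$, and together with $a\tilde a,b\tilde b$ the second class already forces the first, so the resulting $P_1$ is \emph{not} minimal. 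Whatever makes part 4 true for the specific presentation $P(M_\infty)$ (or whatever normalisation of $\tilde R$ the authors intended --- e.g.\ taking one representative per $R$-class before closing) has to be spelled out; your pushforward argument as stated does not suffice.

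Your case (b) is also too thin to assess: ``invoke the HNN-extension analysis \ldots\ to show that $\tilde g$ is a genuinely new element not forced to equal $g^{-1}$'' is exactly the claim that needs proving, and it cannot follow from the abstract hypothesis ``$P(M_\infty)$ minimal'' alone, since that hypothesis says nothing about whether $\hat R$ (i.e.\ $R$ with $g_0^{-1}$ replaced by $\tilde g_0$) forces $\tilde g_0=g_0^{-1}$. You would need to exhibit a concrete retraction or normal-form argument specific to the generators of $P(M_\infty)$. Finally, a small remark on part 5: your claim that ``the conversion cannot produce a $P(M_\infty)$-diagram of strictly smaller area'' is wrong as stated --- collapsing the $g\tilde g$-bigons strictly decreases area whenever any are present --- so the lower bound $n$ in the lemma's interval should be viewed with suspicion; fortunately part 6 only needs the two one-sided conversions you describe, together with the uniform bound on relator lengths, so that part is salvageable without the precise form of part 5.
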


To prove Theorem \ref{main} we will use $P_1(M_{\infty})$ to construct a finitely generated group presentation $P_1'(M_{\infty})$ which will inherit desired properties from $P_1(M_{\infty})$.  For the rest of this section we will assume that $M_{\infty}$ is a c.e. union machine.  It follows from Lemmas \ref{presce} and \ref{obvious} that $P_1(M_{\infty})$ is c.e..  Therefore there exist c.e. sequences $E_1$ and $E_2$ such that $E_1$ is a sequence of positive generators of $P_1(M_{\infty})$ in which every positive generator of $P_1(M_{\infty})$ appears exactly once, and $E_2$ is a sequence of relators of $P_1(M_{\infty})$ in which every relator of $P_1(M_{\infty})$ appears exactly once.\\

We use $g_i$ to denote the $i$th term of the sequence $E_1$.\\

Let $\{b,a\}$ be a set of generating symbols that do not appear in the generating set $Z$ of $P_1(M)$.  These will be the generators of the finitely generated group presentation $P_1'(M_{\infty})$.  We define a map $h$ from $F(Z)$ to $F(b,a)$ as follows.

\begin{equation} \label{emb}
h(g_i)= a^{100}b^{i}a^{101}b^{i} \dots a^{199}b^{i}.
\end{equation}\\

Let $H$ be the subgroup of $F(a,b)$ generated by the words $\{h(g)|g\in Z\}$.  We define an {\em $H$-word} to be a reduced word in $F(a, b)$ representing an element of $H$.  A cyclic $H$-word is a cyclic word that is a cyclic conjugate of an $H$-word.  Note that when we refer to the $h$-image of a word $w$ in $P_1(M_{\infty})$, we mean the non-reduced word obtained by replacing each letter $g_i^{\pm1}$ in $w$ with $h(g_i)^{\pm1}$.  So the $h$-image of a word in $P_1(M_{\infty})$ is not necessarily an $H$-word.\\

Let $P'_1(M_{\infty})$ be the presentation whose generating set is $\{a,b\}$ and whose relator set is the set of $h$-images of relators of $P_1(M_{\infty})$.  \\

\begin{lemma}\label{finaldecidable}

For a c.e. union machine $M_{\infty}$, the presentation $P'_1(M_{\infty})$ is decidable.

\end{lemma}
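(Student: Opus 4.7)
The generating set $\{a,b\}$ of $P'_1(M_\infty)$ is trivially decidable, so the real task is to decide membership in the relator set $h(R_2)$, where $R_2$ denotes the relator set of $P_1(M_\infty)$. My plan is to take an arbitrary $w \in F(\{a,b\})$ and, in three stages, parse it into a candidate preimage, bound that preimage's combinatorial size using the structure of $h$ and Lemma~\ref{rellength}, and finally decide membership via a coordinated c.e. enumeration.

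First I exploit the rigid shape $h(g_i) = a^{100}b^i a^{101}b^i \cdots a^{199} b^i$: the monotone sequence of $a$-powers from $a^{100}$ to $a^{199}$, interleaved with the common marker $b^i$, forces any element of $h(F(Z))$ to decompose uniquely from left to right into blocks of the form $h(g_i)^{\pm 1}$. A simple scanning algorithm therefore decides whether $w \in h(F(Z))$, and on success extracts the unique $v \in F(Z)$ with $h(v) = w$; if $w \notin h(F(Z))$ we output ``no''.

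Next I will argue that $v$ belongs to a computable finite set of candidates. Combining Lemma~\ref{rellength} on the length of the transition relators of $S(M'_\infty)$ with the fixed lengths of the hub relator and the auxiliary relators $\rho x \rho^{-1} x^{-1}$, and with the bounded effect of the tildeization step constructing $P_1(M_\infty)$, I obtain a uniform length bound $L$ for every relator of $R_2$. If $|v| > L$, we output ``no''. Since $|h(g_i)| = 14950 + 100i$ grows linearly in $i$ and the cancellation between adjacent $h$-blocks is bounded, the maximum index of a generator appearing in $v$ is at most some computable function $I(|w|)$, so $v$ lies in the finite, effectively describable set of words of length at most $L$ over $\{g_1^{\pm 1}, \dots, g_{I(|w|)}^{\pm 1}\}$.

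The final stage is to decide whether this particular $v$ lies in $R_2$. Here I plan to choose the c.e. enumerations $E_1$ and $E_2$ in a \emph{coordinated} way so that, for every $I$, any relator $r \in R_2$ whose generators all have index at most $I$ appears within the first $N(I,L)$ terms of $E_2$, for some computable function $N$. This can be engineered by interleaving the underlying c.e. enumeration of $R_2$ with the enumeration of positive generators, and using the computable bound $(2I)^{L+1}$ on the number of candidate words of length at most $L$ over $\{g_1^{\pm 1}, \dots, g_I^{\pm 1}\}$. Once such coordination is in place, the decision procedure simply lists $E_2(1), \dots, E_2(N(I(|w|), L))$ and checks whether $v$ occurs. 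The main technical obstacle will be precisely this coordinated construction, which the specific encoding $h(g_i)$ is designed to enable: the exponent $i$ records the enumeration index of $g_i$, tying the syntactic information in $w$ directly to the position in $E_1$ and $E_2$.
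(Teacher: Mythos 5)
Your stages~1 and~2 are on the right track (recovering the preimage $v$ from the $b$-exponents and bounding the relator lengths are both legitimate observations), but stage~3 has a genuine gap. You assert that the c.e.\ enumerations $E_1,E_2$ can be ``coordinated by interleaving'' so that every relator whose generators have index $\le I$ appears within a computable prefix of $E_2$. This is exactly the claim that the relator set $\tilde R\cup\{g\tilde g\}$ is \emph{decidable} when restricted to words over generators of bounded index, and that is not a property one can manufacture by reordering a c.e.\ enumeration. A c.e.\ set can enumerate its elements arbitrarily slowly; interleaving with $E_1$ does nothing to force a relator that happens to appear very late in the base enumeration of $R_2$ to appear early. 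You need a reason why, given the finitely many candidate relators of bounded index and length, membership can actually be decided — and ``the main technical obstacle will be precisely this coordinated construction'' is not a proof.

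The paper supplies that reason, and it is the step your proposal is missing: the command symbols $\tau_{\vec x}$ carry a complete description of the $M'_\infty$ command they encode, and by Lemma~\ref{comenu} one can \emph{effectively reconstruct} the entire finite set of transition relators of $\mathcal R_\tau$ from a single occurrence of such a symbol. Combined with the facts that there is exactly one hub relator, that auxiliary relators have a rigid commutator form, and that every non-hub relator contains a command symbol, this gives a direct decision procedure for ``is $\iota(v)$ a relator of $P(M_\infty)$?'' once $v$ is known to be a word in actual generators (which the lookup in $E_1$ guarantees). That decidability is what would make your coordinated $E_2$ constructible — enumerate $E_1$, and each time a command letter $\rho$ appears, emit all relators involving $\rho$ via Lemma~\ref{comenu} — but as stated your argument neither cites Lemma~\ref{comenu} nor explains how to get around the general impossibility of accelerating an arbitrary c.e.\ enumeration. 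The paper, by contrast, bypasses the length bound and the coordination machinery entirely: it decides membership of $h^{-1}(w')$ in $R_2$ directly from the command-symbol structure, and only uses $E_1$ to translate indices into concrete generator symbols.
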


\begin{proof}

We will first prove that if $M_{\infty}$ is c.e., then we can decide whether a given word $w$ in the generators of $P_1(M_{\infty})$ is an element of $\tilde{R}\cup \{g \tilde{g}|g\in X\}$, the relator set of $P_1(M_{\infty})$.  We can immediately decide whether $w\in \{g \tilde{g}|g\in X\}$.  Since $w\in \tilde{R}$ if and only if $\iota(w)$ (as defined in Lemma \ref{obvious}) is in the relator set $R$ of $P(M_{\infty})$, it will be sufficient to show that it is decidable whether $\iota(w)$ is in $R$.\\

Since there is only one hub relator in $R$, we can immediately decide whether $\iota(w)$ is the hub relator.  All other relators in $R$ contain command symbols.  Recall that the command symbols of $P(M_{\infty})$ contain a significant amount of information.  In particular, there is an algorithm that takes as input a command symbol $\rho$  of $P(M_{\infty})$ and outputs the finite set of transition relators of $P(M_{\infty})$ in which the letter $\rho$ appears (by Lemma \ref{comenu}).  We can use this algorithm to effectively determine from the command letters that appear in $\iota(w)$ whether or not $\iota(w)$ is a transition relator of $P(M_{\infty})$. \\

If $\iota(w)$ is a commutator of a command letter with a $\kappa$ or $\hat{A}$ letter, then $\iota(w)$ is an auxiliary relator of $P(M_{\infty})$.\\

If $\iota(w)$ is neither the hub relator, a transition relator, nor an auxiliary relator of $P(M_{\infty})$, then $\iota(w)$ is not a relator of $P(M_{\infty})$.\\

Note that this does not mean that $P_1(M_{\infty})$ is decidable.  In fact if $M_{\infty}$ is not c.e. and not decidable, then $P_1(M_{\infty})$ is not decidable because the generating set of $P_1(M_{\infty})$ is not decidable.  We have merely proven that it is possible to decide whether $w$, a given word in the generators of $P_1(M_{\infty})$, is a relator of $P_1(M_{\infty})$.\\

It is not necessary to decide the generating set of $P_1(M_{\infty})$ in order to decide $P'_1(M_{\infty})$. The generating set of $P'_1(M_{\infty})$ is finite. Thus, in order to prove that $P'_1(M_{\infty})$ is decidable, it will be sufficient to show that the relator set of $P'_1(M_{\infty})$ is decidable.  Suppose we are given a word $w'$ in the generators of $P'_1(M_{\infty})$.  It follows from the definition of $h$ that it is decidable whether $w'$ is an $H$-word.  If $w'$ is an $H$-word, then by examining the powers of $b$ that appear in $w'$ we can recover the tuple of indices $i_1,\dots,i_n$ such that $h(g_{i_1}\dots g_{i_n})=w'$.  We can then use these indices and the aforementioned c.e. sequence $E_1$ of generators to effectively construct the $P_1(M_{\infty})$ word $h^{-1}(w')=g_{i_1}\dots g_{i_n}$.  The word $w'$ is a relator of $P'_1(M_{\infty})$ if and only if $h^{-1}(w')$ is a relator of $P_1(M_{\infty})$, which is decidable by the above argument.  \\

\end{proof}

\begin{lemma}\label{citedstuff}

The map $h$ is an embedding of $P_1(M_{\infty})$ into $P_1'(M_{\infty})$.  The word problem is solvable for $P_1(M_{\infty})$ if and only if the word problem is solvable for $P_1'(M_{\infty})$.

\end{lemma}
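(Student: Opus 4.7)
The plan is to handle the two claims separately. First, for the embedding claim, I will verify that $h : F(Z) \to F(a,b)$ induces a well-defined homomorphism $\bar h : P_1(M_\infty) \to P_1'(M_\infty)$ (immediate from the definition of $P_1'(M_\infty)$, whose relators are by construction the $h$-images of relators of $P_1(M_\infty)$), and then establish injectivity via a van Kampen diagram argument. The word problem equivalence will then follow from the embedding combined with an additional algorithmic argument that essentially reproduces the one in \cite{FG}.

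For the embedding, the key combinatorial observation is that the set $\{h(g_i)\}$ is Nielsen reduced in $F(a,b)$. This is transparent from (\ref{emb}): each $h(g_i)$ is a concatenation of one hundred blocks $a^{100+j}b^i$, where the exponent of each $b$-syllable identifies the index $i$, and the strictly varying $a$-spacer lengths $100, 101, \dots, 199$ preclude cancellations exceeding half of either factor in any product $h(g_i)^{\pm 1} h(g_j)^{\pm 1}$ with $h(g_i) \neq h(g_j)^{\mp 1}$. Consequently $h$ is injective on $F(Z)$ and $H = \langle h(g_i) \rangle$ is free of countably infinite rank. To promote this to injectivity of $\bar h$, I will invoke the diagram-collapsing argument of \cite{FG}: starting from a minimal-area diagram $\Delta'$ over $P_1'(M_\infty)$ whose boundary label is $h(w)$, one exploits the fact that every $2$-cell of $\Delta'$ has boundary label that is itself an $h$-image, together with the Nielsen property, to consistently cluster edges into ``$h$-blocks'' throughout $\Delta'$ and produce a diagram $\Delta$ over $P_1(M_\infty)$ with boundary label $w$. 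The argument is entirely local in $\Delta'$, so it transports without change from the finitely-generated setting of \cite{FG} to our countably-generated one.

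For the word problem equivalence, the direction from $P_1'(M_\infty)$ to $P_1(M_\infty)$ is immediate from the embedding: given $w \in F(Z)$, we compute $h(w) \in F(a,b)$ (effectively, since the indices of the finitely many generators appearing in $w$ are recoverable via the enumeration $E_1$) and apply the word problem solver for $P_1'(M_\infty)$; by injectivity of $\bar h$, $w = 1$ in $P_1(M_\infty)$ iff $h(w) = 1$ in $P_1'(M_\infty)$. The reverse direction is more subtle because $H$ is not normal in $F(a,b)$, so the normal closure $\langle\langle h(R_1) \rangle\rangle$ generally contains reduced words that are not themselves $H$-words (for instance any non-trivial conjugate $g h(r) g^{-1}$ with $g \notin H$). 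Following \cite{FG}, one sets up a normalization procedure that, given $w' \in F(a,b)$, applies a controlled sequence of rewriting moves based on the $h$-block structure of the relators of $P_1'(M_\infty)$ until $w'$ either is manifestly non-trivial or has been reduced to an $H$-word, at which point triviality in $P_1'(M_\infty)$ reduces by the embedding to triviality of $h^{-1}(w')$ in $P_1(M_\infty)$.

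The main obstacle, and the step where the parallel with \cite{FG} is most heavily used, is the verification that this normalization procedure is effective in our setting. Specifically, one must confirm that the rewriting terminates on arbitrary input, that it correctly distinguishes trivial from non-trivial elements modulo $\langle\langle h(R_1) \rangle\rangle$, and that at each step it involves only finitely many of the generators $g_i$, so that the procedure is genuinely algorithmic despite the countable generating set of $P_1(M_\infty)$. The first two points reduce to the same locality property underlying the embedding argument. The third is ensured because only the generators $h(g_i)$ actually appearing in $w'$ and in the (finitely many) relators invoked during normalization are ever active; combined with Lemma \ref{finaldecidable}, this yields a decision procedure for the word problem of $P_1'(M_\infty)$ whenever one is available for $P_1(M_\infty)$, completing the proof.
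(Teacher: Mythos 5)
Your proposal is correct and follows essentially the same route as the paper, which simply cites \cite[Lemmas 8 and 11]{FG} for the embedding and for the implication ``word problem solvable for $P_1(M_{\infty})$ implies word problem solvable for $P_1'(M_{\infty})$,'' and obtains the reverse implication from the embedding together with the effectiveness of $h$, exactly as you do. Your additional detail simply unpacks the content of the cited lemmas (the Nielsen-reduced/small-cancellation property of $\{h(g_i)\}$ and the diagram-normalization procedure), machinery the paper itself revisits in Lemmas \ref{fix1} and \ref{fix3} for the Dehn-function comparison, so the underlying argument is the same.
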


\begin{proof}

By  \cite[Lemma 8]{FG} the map $h$ is an embedding of $P_1(M_{\infty})$ into $P_1'(M_{\infty})$.  By \cite[Lemma 11]{FG} if the word problem is solvable for $P_1(M_{\infty})$, then the word problem is solvable for $P_1'(M_{\infty})$.  If the word problem is solvable for $P_1'(M_{\infty})$, then it follows from the fact that $h$ is an embedding that the word problem is solvable for $P_1(M_{\infty})$.

\end{proof}

\begin{lemma}\label{finalminimal}

$P_1'(M_{\infty})$ is a minimal presentation.

\end{lemma}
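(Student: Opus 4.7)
The plan is to argue by contradiction, reducing to the minimality of $P_1(M_{\infty})$, which follows from Lemma \ref{minimal1} and part (4) of Lemma \ref{obvious}. Suppose $P'_1(M_{\infty})$ is not minimal. Then there exists a relator $\omega$ of $P'_1(M_{\infty})$ whose orbit $[\omega]$ under cyclic conjugation and inversion satisfies $\omega \in \langle\langle R' \setminus [\omega] \rangle\rangle$ in $F(a,b)$, where $R'$ denotes the relator set of $P'_1(M_{\infty})$. By the definition of $R'$, the word $\omega$ is a cyclic conjugate of $h(r_0)^{\pm 1}$ for some relator $r_0$ of $P_1(M_{\infty})$; denote the relator set of $P_1(M_{\infty})$ by $R_1$.

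The key step is to apply the embedding argument of \cite[Lemma 8]{FG} generically. That argument uses only the structure of the map $h$ from \eqref{emb} and the fact that the underlying list of generators is c.e.; it does not use any special property of the relator set $R_1$. Consequently, the same construction yields an embedding of the auxiliary presentation $\tilde{Q} := \langle Z \| R_1 \setminus [r_0] \rangle$ into $\tilde{Q}' := \langle a, b \| h(R_1 \setminus [r_0]) \rangle$, where $[r_0]$ denotes the orbit of $r_0$ under cyclic conjugation and inversion in $F(Z)$.

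Next, $R' \setminus [\omega]$ and the relator set of $\tilde{Q}'$ (i.e., the closure of $h(R_1 \setminus [r_0])$ under cyclic conjugation and inversion) generate the same normal closure in $F(a,b)$, since both consist, up to cyclic conjugation and inversion, of $\{h(r) : r \in R_1,\ r \notin [r_0]\}$. Hence $\omega \in \langle\langle h(R_1 \setminus [r_0]) \rangle\rangle$, so $\omega$, and therefore $h(r_0)$, is trivial in $\tilde{Q}'$. By the embedding $\tilde{Q} \hookrightarrow \tilde{Q}'$ established above, $r_0$ is trivial in $\tilde{Q}$, i.e., $r_0 \in \langle\langle R_1 \setminus [r_0] \rangle\rangle$ in $F(Z)$. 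This contradicts the minimality of $P_1(M_{\infty})$, completing the proof.

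The main obstacle is confirming that the proof of \cite[Lemma 8]{FG} is genuinely generic in the relator set, so that it applies to $\tilde{Q}$ with $[r_0]$ excised from $R_1$; this requires inspecting \cite{FG} and verifying that no specific feature of $R_1$ is used beyond its being c.e. A secondary bookkeeping point is verifying that the orbit of $h(r_0)$ in $F(a,b)$ under cyclic conjugation and inversion coincides, as subsets of $R'$, with the set of all cyclic conjugates and inverses of $\{h(r) : r \in [r_0]\}$. This uses the block structure of $h(g_i) = a^{100}b^i a^{101}b^i \cdots a^{199}b^i$ to ensure that distinct orbits in $R_1$ produce distinct orbits in $R'$, so that ``removing $[\omega]$ from $R'$'' on the $P'_1$ side really corresponds to ``removing $[r_0]$ from $R_1$'' on the $P_1$ side.
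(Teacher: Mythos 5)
Your proof is correct and follows essentially the same strategy as the paper: use minimality of $P_1(M_{\infty})$ together with the observation that the embedding argument of \cite[Lemma 8]{FG} continues to apply after a relator orbit is removed from both sides. The paper's proof is phrased contrapositively rather than by contradiction and is terser about the orbit bookkeeping (it simply writes $r = h^{-1}(r')$ and ``removing $r$, $r'$''), but it relies on exactly the same two ingredients you identify: minimality of $P_1(M_\infty)$ (Lemmas \ref{minimal1}, \ref{obvious}) and the genericity of the \cite{FG} embedding, which the paper invokes via Lemma \ref{citedstuff} applied to the reduced presentations.
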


\begin{proof}

By Lemmas \ref{minimal1} and \ref{obvious}, $P_1(M_{\infty})$ is a minimal presentation.  Let $r'$ be a relator of $P_1'(M_{\infty})$ and let $r=h^{-1}(r')$ be the corresponding relator of $P_1(M_{\infty})$.  Let $P_1(M_{\infty})\setminus \{r\}$ and $P_1'(M_{\infty})\setminus \{r'\}$ denote the presentations obtained by removing $r$ and $r'$ from $P_1(M_{\infty})$ and $P_1'(M_{\infty})$, respectively.  Since $P_1(M_{\infty})$ is minimal, $r$ is not trivial in $P_1(M_{\infty})\setminus \{r\}$.  By Lemma \ref{citedstuff}, $h$ is an embedding of $P_1(M_{\infty})\setminus \{r\}$  into $P_1'(M_{\infty})\setminus \{r'\}$ and therefore $r'$ is not trivial in $P_1'(M_{\infty})\setminus \{r'\}$.  This proves the lemma.

\end{proof}

We denote the Dehn function for $P_1(M_{\infty})$ by $f$ and the Dehn function for $P_1'(M_{\infty})$ by $f'$

\begin{lemma}\label{Dehnfmore}

$f' \preceq f$.

\end{lemma}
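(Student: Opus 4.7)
The approach is to bound the Dehn function of $P_1'(M_\infty)$ by that of $P_1(M_\infty)$ by lifting van Kampen diagrams from $P_1(M_\infty)$ to $P_1'(M_\infty)$ through the embedding $h$.

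First I would handle the case when a trivial word $w'$ of length $n$ in $P_1'(M_\infty)$ is itself an $H$-word. In this case, $w' = h(w)$ for a uniquely determined word $w$ in the generators $Z$ of $P_1(M_\infty)$; uniqueness follows from the combinatorial structure of the words $h(g_i) = a^{100}b^i a^{101}b^i \cdots a^{199}b^i$ (each $h(g_i)$ begins with $a^{100}$ and contains the block pattern $a^j b^i$ for $100 \leq j \leq 199$, which uniquely identifies the generator index $i$). Because each $h(g_i)$ has length at least $14950$ and the interior $a$-blocks of each $h(g_i)$ cannot cancel with neighboring factors in a reduced expression in the $h(g_j)^{\pm 1}$ (adjacent generators are distinct, so only boundary $b$-runs may partially cancel), there is a constant $C_0 > 0$ such that $|w| \leq C_0 n$. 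By Lemma \ref{citedstuff}, $h$ is an embedding, so $w$ is trivial in $P_1(M_\infty)$, and a minimal-area $P_1(M_\infty)$ diagram $\Delta$ for $w$ has area at most $f(|w|) \leq f(C_0 n)$.

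Next I would construct a van Kampen diagram $\Delta'$ for $w'$ over $P_1'(M_\infty)$ from $\Delta$ by subdividing each edge of $\Delta$ labeled by a generator $g_i^{\pm 1}$ into a path whose labels spell out $h(g_i)^{\pm 1}$, and replacing each 2-cell of $\Delta$ whose boundary is labeled by a relator $r$ of $P_1(M_\infty)$ with a single 2-cell of $\Delta'$ whose boundary is labeled $h(r)$. By construction $h(r)$ is a relator of $P_1'(M_\infty)$, and the number of 2-cells is preserved, so $\mathrm{Area}(\Delta') = \mathrm{Area}(\Delta) \leq f(C_0 n)$. For trivial words $w'$ that are not themselves $H$-words, I would first produce a van Kampen diagram of area $O(n)$ transforming $w'$ into an $H$-word $\tilde{w}'$ of length $O(n)$: the defining relators $h(r)$ of $P_1'(M_\infty)$ allow one to ``absorb'' non-$H$-portions of $w'$ (such as conjugators $u\notin H$ surrounding occurrences of relators) at constant cost per unit of boundary length. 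Combining this preliminary step with the $H$-word construction yields a diagram for $w'$ of total area $O(n) + f(C_0 n) = O(f(C_0 n))$, and since $f(n)\geq n$ this gives $f'(n) \preceq f(n)$.

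The main obstacle is the reduction to the $H$-word case. Establishing that an arbitrary trivial word in $P_1'(M_\infty)$ can be transformed to an $H$-word at linear cost requires careful analysis of how the subgroup $H$ sits in $F(a,b)$ and of the interaction between conjugators and the Higman-style relators $h(r)$. This is precisely the kind of diagram-rewriting argument carried out in \cite{FG}, whose construction we follow closely; once that reduction is in place, the lifting described above gives the bound.
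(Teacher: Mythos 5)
Your $H$-word case matches the paper's exactly: use the embedding $h$ to pull $w'$ back to a trivial word $w$ in $P_1(M_\infty)$, take a minimal diagram $\Delta$ for $w$, and subdivide the edges by $h$-images to get a $P_1'$-diagram of the same area; the paper even gets away with the simpler bound $|h^{-1}(w')|\leq|w'|$.

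Where you diverge, and where there is a real imprecision, is the non-$H$-word case. You propose to first transform $w'$ at linear cost into a \emph{single} $H$-word $\tilde{w}'$ of length $O(n)$, then apply the lifting to $\tilde{w}'$. The paper instead invokes part 4 of \cite[Lemma 8]{FG}, which gives a different structural statement: $w'$ is a \emph{product of conjugates of $H$-words} $w'_1,\dots,w'_n$ with $\sum_i|w'_i|\leq O(|w'|)$. One then applies the $H$-word bound separately to each $w'_i$ and combines the resulting areas using the fact that $f$ is (up to equivalence) superadditive, to get a total area $\sum_i f(|w'_i|)\preceq f(\sum_i |w'_i|)\leq f(O(|w'|))$. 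Your ``single $H$-word'' reduction is a stronger claim than what \cite{FG} provides, and I do not think it holds: if $w' = u_1 w'_1 u_1^{-1}\, u_2 w'_2 u_2^{-1}$ with long non-$H$ conjugators $u_1,u_2$, there is no reason a single $H$-word of length $O(n)$ should be reachable from $w'$ at linear cost. You also omit the role of superadditivity, which is genuinely needed once the decomposition is into several factors. You do recognize that the crux is the reduction and defer to \cite{FG}, so the overall plan is the same; but the specific mechanism you describe for the reduction would need to be replaced by the product-of-conjugates decomposition from \cite[Lemma 8]{FG} together with superadditivity.
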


\begin{proof}

If $w'$ is an $H$-word and $w'$ is trivial in $P_1'(M_{\infty})$, then by the definition of $h$, $|h^{-1}(w')|\leq |w'|$.  Since $h$ is an embedding, there is a $P_1(M_{\infty})$ diagram $\Delta$ with boundary $h^{-1}(w')$.  The area of $\Delta$ is at most $f(|h^{-1}(w')|)\leq f(|w'|)$.  If we replace each edge $e$ in $\Delta$ with a path $p_e$ such that $\text{Lab}(p_e)=h(\text{Lab}(e))$, then the resulting object is a $P_1'(M_{\infty})$ diagram with area not exceeding $f(|w'|)$ and whose boundary label is freely equal to $w'$.\\

If $w'$ is a trivial word in $P_1'(M_{\infty})$ that is not an $H$-word, then it follows from part 4 of \cite[Lemma 8]{FG} that $w'$ is a product of conjugates of $H$-words $w'_1,\dots w'_n$, such that $\Sigma_{i=1}^n |w'_i|\leq O(w')$.  By the above paragraph and the fact that $f$ is equivalent to a superadditive function, there is a $P_1'(M_{\infty})$ diagram for $w'$ with area not exceeding $f(O(w'))$.

\end{proof}

In order to prove Theorem \ref{main}, we must prove that $f'\succeq f$.  If $\Delta'$ is a $P_1'(M_{\infty})$ diagram, we define a path $p$ in $\Delta'$ to be an {\em $h$-path} if the label of $p$ is $h(x)$ for some $x\in Z$ and either $p$ is a subpath of the boundary of a 2-cell of $\Delta'$ or no edges of $p$ are contained in the boundary of a 2-cell of $\Delta'$.\\

Suppose $\Delta'$ is a $P_1'(M_{\infty})$ diagram containing distinct $h$-paths $p_1$ and $p_2$.  If $p_1$ and $p_2^{-1}$ share a common edge or a common vertex that is not an endpoint of both $p_1$ and $p_2$, then $p_1$ and $p_2$ are \textit{adjacent} $h$-paths.  If $p_1$ and $p_2^{-1}$ share a common edge, then $p_1$ and $p_2$ are {\em edge adjacent} $h$-paths.  Suppose $p_1$ and $p_2$ are adjacent $h$-paths where $q$ is a common subpath of $p_1$ and $p_2^{-1}$ such that $|q|>0$, $p_1=u_1q\beta_1$, and $p_2^{-1}=u_2q\beta_2$.  If $\text{Lab} (u_1) =\text{Lab} (u_2)$ and $\text{Lab} (\beta_1)= \text{Lab} (\beta_2)$, then we call $p_1$ and $p_2$ \textit{strongly adjacent} $h$-paths.  If $p_1=p_2^{-1}$, then $p_1$ and $p_2$ are {\em contiguous} $h$-paths. \\

\begin{lemma}\label{fix1}

Suppose that $\Delta'$ is a $P_1'(M_{\infty})$ diagram in which every edge is contained in an $h$-path and no two non-contiguous $h$-paths are adjacent.  Then there is a $P_1(M_{\infty})$ diagram $\Delta$ with boundary label $w$ such that $\text{Lab}(\partial\Delta')=h(w)$ and the area of $\Delta$ is the same as that of $\Delta'$.

\end{lemma}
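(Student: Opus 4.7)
The plan is to build $\Delta$ by collapsing each $h$-path of $\Delta'$ to a single oriented edge labeled by its $h$-preimage in $Z$. I begin with two preparatory observations. First, inspection of equation \eqref{emb} shows that $h\colon F(Z)\to F(\{a,b\})$ is injective and that $h(g_i)=h(g_j)$ forces $i=j$; hence each $h$-path $p$ in $\Delta'$ determines a unique letter $x\in Z^{\pm1}$ with $\text{Lab}(p)=h(x)$, giving an unambiguous label for the collapsed edge. Second, the rigid pattern $h(g_i)=a^{100}b^{i}a^{101}b^{i}\dots a^{199}b^{i}$ means that two $h$-paths which agree on a non-trivial subpath must either be contiguous or else constitute a non-contiguous adjacent pair, which is excluded by hypothesis.

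The next step is to organize the edges of $\Delta'$ into $h$-segments. Using the assumption that every edge lies in some $h$-path together with the non-adjacency hypothesis, I would argue that each edge of $\Delta'$ belongs to exactly one $h$-segment, where an $h$-segment is either (i) a single $h$-path all of whose edges lie on $\partial\Delta'$ (and nowhere on the boundary of a $2$-cell), or (ii) a pair $\{p,p^{-1}\}$ of contiguous $h$-paths that share all of their edges. Moreover, distinct $h$-segments can meet only at their endpoints, since any shared non-endpoint vertex between two edges from different segments would force a pair of non-contiguous $h$-paths to share a vertex that is not a common endpoint, contradicting the non-adjacency hypothesis.

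With this combinatorial structure in hand, I would define $\Delta$ as the planar $2$-complex whose $1$-skeleton is obtained from that of $\Delta'$ by collapsing each $h$-segment to a single oriented edge, labeled by the corresponding element of $Z$ (with orientation chosen consistently on the two representatives in a contiguous pair). For each $2$-cell $\pi$ of $\Delta'$, the boundary label is $h(r)$ for some relator $r$ of $P_1(M_{\infty})$, and by construction $\partial\pi$ is a cyclic concatenation of $h$-paths corresponding one-to-one with the letters of $r$; under the collapse, $\pi$ becomes a $2$-cell of $\Delta$ with boundary label $r$. Applying the same collapse to the outer face gives the boundary of $\Delta$ with label $w$ such that $h(w)=\text{Lab}(\partial\Delta')$. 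The bijection between $2$-cells preserves area.

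The main obstacle is verifying that the collapse really does yield a planar, connected, simply connected $2$-complex rather than merely an abstract CW-complex. The delicate point is controlling the incidence structure of the $h$-segments so that the quotient map neither glues distinct interior points of $\Delta'$ nor punctures the underlying disc. The non-adjacency hypothesis is tailored precisely for this: because distinct $h$-segments are internally disjoint, the quotient is a cellular map whose restriction to the interior of each cell is a homeomorphism, so planarity, connectedness, and simple connectedness of $\Delta$ are inherited from $\Delta'$. Once this is established, reading off $\text{Lab}(\partial\Delta)=w$ and verifying that every $2$-cell label is a relator of $P_1(M_{\infty})$ is immediate from the construction.
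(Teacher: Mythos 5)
Your proof takes essentially the same approach as the paper's: collapse each $h$-path of $\Delta'$ to a single edge labeled by its $h$-preimage, which is well-defined precisely because every edge lies in an $h$-path and the only allowed adjacency is full contiguity. The paper's own proof is a single terse sentence, so your more careful treatment of the $h$-segment decomposition, internal disjointness, and preservation of planarity/simple connectedness is a legitimate fleshing-out of the same construction rather than a different argument.
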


\begin{proof}

Since every edge is contained in an $h$-path and no two non-contiguous $h$-paths are adjacent, we can replace each $h$-path $p$ of $\Delta'$ with a single edge $e$ such that $h(\text{Lab}(e))=\text{Lab}(p)$ and call the resulting $P_1(M_{\infty})$ diagram $\Delta$.  It follows that $\Delta$ has boundary label $w$ such that $\text{Lab}(\partial\Delta')=h(w)$ and the area of $\Delta$ is the same as that of $\Delta'$.

\end{proof}

\begin{lemma}\label{fix3}

If $\Delta'$ is a minimal area $P_1'(M_{\infty})$ diagram whose boundary label $w'$ is the $h$-image of a strictly positive trivial word in $P_1(M_{\infty})$, then there exists a $P_1'(M_{\infty})$ diagram with boundary label $w'$ and the same area as $\Delta'$ in which every edge is contained in an $h$-path and no two $h$-paths are adjacent.

\end{lemma}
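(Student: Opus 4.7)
The approach is to modify $\Delta'$ via a combination of tree removal and folding surgeries until the diagram satisfies the required properties, while preserving both the outer boundary label $w'$ and the area.

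First, I would remove any tree components of $\Delta'$ (maximal subcomplexes of edges lying on no 2-cell). Since $w'$ is strictly positive, no tree edge can contribute a cancelling pair to $\partial\Delta'$, so every tree is internal and can be deleted without changing the area or the outer boundary label. After this step, every edge of $\Delta'$ lies on the boundary of at least one 2-cell or on $\partial\Delta'$.

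Next, I would exhibit the canonical $h$-path decompositions. The boundary of each 2-cell has label $h(r)^{\pm 1}$ for some relator $r$ of $P_1(M_{\infty})$, and $\partial\Delta' = h(w)$ with $w$ strictly positive. By the rigid combinatorial structure of $h(g_i) = a^{100}b^i a^{101}b^i \cdots a^{199}b^i$ (100 distinct $a$-block lengths separated by a uniform $b^i$ that determines $i$), any word of the form $h(v)$ admits a unique decomposition as a concatenation of $h$-blocks, one per letter of $v$. This uniqueness forces each 2-cell boundary to subdivide canonically into a concatenation of $h$-paths, and likewise for $\partial\Delta'$. In particular, every edge of $\Delta'$ already lies on some $h$-path.

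The main technical step is eliminating adjacencies between distinct $h$-paths. Suppose $p_1$ and $p_2$ are adjacent $h$-paths, so $p_1$ and $p_2^{-1}$ share either a common edge or a common non-endpoint vertex. The rigidity of the $h$-pattern guarantees that any overlap must respect the decomposition into $h$-blocks, so the maximal shared subpath fits into the pattern in only one way. I would apply a folding surgery at (or just beyond) the shared subpath: cutting along a length-two detour of the form $ee^{-1}$ available at the boundary of the shared segment, then re-gluing so that the newly created vertices are not identified. This cleanly separates $p_1$ from $p_2$ without altering either the area or the outer boundary label. To control the process I would track a lexicographic invariant, for instance the pair (total length of shared edges between distinct $h$-paths, total number of non-endpoint vertex adjacencies), and verify that each surgery strictly decreases it.

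The main obstacle is verifying that folding surgeries do not introduce new adjacencies of higher priority under this invariant. I would handle this by a local case analysis around the surgery site, using crucially that each relator of $P_1(M_{\infty})$ is either strictly positive or strictly negative, so every 2-cell boundary is monotone in sign when read clockwise. Combined with the uniqueness of the $h$-decomposition, this restricts the local combinatorics enough to ensure the invariant strictly decreases and is bounded below by zero. The procedure therefore terminates in finitely many steps, and the resulting diagram has the same area as $\Delta'$, the same outer boundary label $w'$, every edge contained in an $h$-path, and no two $h$-paths adjacent, as required.
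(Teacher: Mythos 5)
Your proposal has a genuine gap, and I think it stems from a misreading of what a folding surgery accomplishes. A folding surgery cuts along a path $e_1e_2^{-1}$ where $e_1,e_2$ share a label and a final vertex, and then regluess so the two initial copies of that vertex remain distinct --- the net effect is to \emph{identify} $e_1$ with $e_2$, increasing the overlap between paths. It is a merging operation, not a separating one. In the paper this is used precisely to turn a strongly adjacent pair $(p_1,p_2)$ into a contiguous pair by folding $p_1$ onto $p_2^{-1}$ edge by edge. Your proposal instead claims the surgery will ``cleanly separate $p_1$ from $p_2$,'' which is the opposite of what the operation does; the step you describe cannot be carried out as stated.

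The deeper missing ingredient is the argument that after the foldings there are no \emph{non-strongly}-adjacent $h$-paths left. You assert this follows from ``the rigidity of the $h$-pattern,'' but that is exactly the point requiring proof, and it is not local. The paper handles it by passing to a spherical diagram $\hat{\Delta}'$ (capping $\Delta'$ with a 2-cell labeled $w'^{-1}$), partitioning $h$-paths into edge-adjacency classes, contracting an auxiliary graph $G$ inside each 2-cell to produce spherical diagrams $\Upsilon_i$ over $\langle a,b \parallel h(g),\, g\in Z\rangle$ (a $C'(1/10)$ presentation), and then invoking Greendlinger's Lemma to force a reducible pair whenever an edge-adjacency class has more than two members; since strongly adjacent pairs have already been eliminated by folding, this is a contradiction. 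Your proposal has no analogue of this global small-cancellation step, and the lexicographic invariant you sketch is asserted but never shown to strictly decrease --- indeed, since the available surgery merges rather than separates, it is unclear that your proposed invariant (total shared edge length plus vertex adjacencies) decreases under any legitimate move. The spherical-capping trick, which also disposes of your separate tree-removal step (every edge of a spherical diagram lies on a 2-cell), is likewise absent.
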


\begin{proof}

We first create a spherical diagram $\hat{\Delta}'$ by gluing a single 2-cell $\pi_0$ with boundary label $w'^{-1}$ to the boundary of $\Delta'$.  We consider subpaths of $\partial\pi_0$ whose labels are $h$-images of elements of $Z$ to be $h$-paths even though $\pi_0$ is not a $P'_1(M_{\infty})$ 2-cell.  We first show that $\hat{\Delta}'$ can be transformed via folding surgeries into a spherical diagram in which every pair of strongly adjacent $h$-paths are contiguous.\\

Suppose that $p_1,p_2$ are strongly adjacent $h$-paths in $\hat{\Delta}'$.  Suppose $p_1=\mu_1q\beta_1$ and $p_2^{-1}= \mu_2 q \beta_2$, where $q$ is a common subpath of $p_1$ and $p_2^{-1}$ such that $\text{Lab}(\mu_1)=\text{Lab}(\mu_2)$ and $\text{Lab}(\beta_1)=\text{Lab}(\beta_2)$.\\

We will identify $p_1$ and $p_2^{-1}$ edge by edge by performing folding surgeries.  We start by performing a folding surgery to identify the final edges of $\mu_1$ and $\mu_2$.  We denote these edges by $e_1$ and $e_2$ respectively.  Since $\text{Lab}(\mu_1)=\text{Lab}(\mu_2)$, we know that $e_1$ and $e_2$ have the same label.  Since $\mu_1$ and $\mu_2$ share the same final vertex, $e_1$ and $e_2$ share the same final vertex.  Since $\Delta'$ is a minimal area diagram, $e_1$ and $e_2$ do not share the same initial vertex.  We can thus perform a folding surgery at the path $e_1e_2^{-1}$ to increase by one the number of edges shared by $p_1$ and $p_2^{-1}$, as shown in the below figure.  \\

\begin{center}

\unitlength 1mm 
\linethickness{0.4pt}
\ifx\plotpoint\undefined\newsavebox{\plotpoint}\fi 
\begin{picture}(103.984,52.25)(0,0)
\qbezier(17,41)(51.75,52.25)(43.5,28.5)
\qbezier(43.25,28.25)(36,8.625)(14.75,20.5)
\multiput(14.75,20)(.0383895131,.0337078652){267}{\line(1,0){.0383895131}}
\multiput(25,29)(-.033695652,.051086957){230}{\line(0,1){.051086957}}
\multiput(25,29)(2.28125,.03125){8}{\line(1,0){2.28125}}
\put(30.75,20){\makebox(0,0)[cc]{$\pi_1$}}
\put(33.5,41.5){\makebox(0,0)[cc]{$\pi_2$}}
\put(32,31.5){\makebox(0,0)[cc]{$q$}}
\put(22.5,36){\makebox(0,0)[cc]{$e_2$}}
\put(19.5,28){\vector(1,1){.07}}\multiput(14.5,23.5)(.037313433,.03358209){134}{\line(1,0){.037313433}}
\put(20,31.25){\vector(1,-2){.07}}\multiput(16.5,37.5)(.033653846,-.060096154){104}{\line(0,-1){.060096154}}
\put(38.75,27.5){\vector(1,0){.07}}\multiput(28.5,27.75)(1.28125,-.03125){8}{\line(1,0){1.28125}}
\put(103.984,28.75){\line(0,1){.7753}}
\put(103.965,29.525){\line(0,1){.7734}}
\put(103.908,30.299){\line(0,1){.7696}}
\multiput(103.812,31.068)(-.03328,.19099){4}{\line(0,1){.19099}}
\multiput(103.679,31.832)(-.028435,.126082){6}{\line(0,1){.126082}}
\multiput(103.509,32.589)(-.029668,.106738){7}{\line(0,1){.106738}}
\multiput(103.301,33.336)(-.03053,.092003){8}{\line(0,1){.092003}}
\multiput(103.057,34.072)(-.031134,.080344){9}{\line(0,1){.080344}}
\multiput(102.777,34.795)(-.031549,.070841){10}{\line(0,1){.070841}}
\multiput(102.461,35.503)(-.03182,.06291){11}{\line(0,1){.06291}}
\multiput(102.111,36.195)(-.031974,.05616){12}{\line(0,1){.05616}}
\multiput(101.727,36.869)(-.0320329,.0503228){13}{\line(0,1){.0503228}}
\multiput(101.311,37.524)(-.0320111,.045206){14}{\line(0,1){.045206}}
\multiput(100.863,38.156)(-.0319196,.0406689){15}{\line(0,1){.0406689}}
\multiput(100.384,38.766)(-.0317669,.0366063){16}{\line(0,1){.0366063}}
\multiput(99.876,39.352)(-.033532,.0349967){16}{\line(0,1){.0349967}}
\multiput(99.339,39.912)(-.0352156,.033302){16}{\line(-1,0){.0352156}}
\multiput(98.776,40.445)(-.039268,.0336281){15}{\line(-1,0){.039268}}
\multiput(98.187,40.949)(-.0408772,.0316524){15}{\line(-1,0){.0408772}}
\multiput(97.574,41.424)(-.0454148,.0317142){14}{\line(-1,0){.0454148}}
\multiput(96.938,41.868)(-.0505317,.0317024){13}{\line(-1,0){.0505317}}
\multiput(96.281,42.28)(-.056368,.031605){12}{\line(-1,0){.056368}}
\multiput(95.604,42.66)(-.063117,.031407){11}{\line(-1,0){.063117}}
\multiput(94.91,43.005)(-.071046,.031085){10}{\line(-1,0){.071046}}
\multiput(94.2,43.316)(-.080546,.030607){9}{\line(-1,0){.080546}}
\multiput(93.475,43.591)(-.092201,.029926){8}{\line(-1,0){.092201}}
\multiput(92.737,43.831)(-.10693,.028968){7}{\line(-1,0){.10693}}
\multiput(91.989,44.033)(-.151518,.033129){5}{\line(-1,0){.151518}}
\multiput(91.231,44.199)(-.19121,.03203){4}{\line(-1,0){.19121}}
\put(90.466,44.327){\line(-1,0){.7702}}
\put(89.696,44.418){\line(-1,0){.7737}}
\put(88.922,44.47){\line(-1,0){.7754}}
\put(88.147,44.484){\line(-1,0){.7751}}
\put(87.372,44.46){\line(-1,0){.773}}
\put(86.599,44.397){\line(-1,0){.769}}
\multiput(85.83,44.297)(-.152618,-.027626){5}{\line(-1,0){.152618}}
\multiput(85.067,44.159)(-.125893,-.02926){6}{\line(-1,0){.125893}}
\multiput(84.311,43.983)(-.106541,-.030367){7}{\line(-1,0){.106541}}
\multiput(83.566,43.771)(-.091801,-.031132){8}{\line(-1,0){.091801}}
\multiput(82.831,43.522)(-.080138,-.03166){9}{\line(-1,0){.080138}}
\multiput(82.11,43.237)(-.070633,-.032013){10}{\line(-1,0){.070633}}
\multiput(81.404,42.916)(-.0627,-.032231){11}{\line(-1,0){.0627}}
\multiput(80.714,42.562)(-.055949,-.032341){12}{\line(-1,0){.055949}}
\multiput(80.043,42.174)(-.0501118,-.0323619){13}{\line(-1,0){.0501118}}
\multiput(79.391,41.753)(-.0449953,-.0323066){14}{\line(-1,0){.0449953}}
\multiput(78.761,41.301)(-.0404589,-.0321854){15}{\line(-1,0){.0404589}}
\multiput(78.154,40.818)(-.0363974,-.0320061){16}{\line(-1,0){.0363974}}
\multiput(77.572,40.306)(-.0327305,-.0317747){17}{\line(-1,0){.0327305}}
\multiput(77.015,39.766)(-.0330705,-.0354331){16}{\line(0,-1){.0354331}}
\multiput(76.486,39.199)(-.0333701,-.0394875){15}{\line(0,-1){.0394875}}
\multiput(75.986,38.607)(-.0336256,-.0440183){14}{\line(0,-1){.0440183}}
\multiput(75.515,37.99)(-.0314159,-.0456216){14}{\line(0,-1){.0456216}}
\multiput(75.075,37.352)(-.0313706,-.0507383){13}{\line(0,-1){.0507383}}
\multiput(74.667,36.692)(-.031235,-.056574){12}{\line(0,-1){.056574}}
\multiput(74.293,36.013)(-.030992,-.063321){11}{\line(0,-1){.063321}}
\multiput(73.952,35.317)(-.030618,-.071249){10}{\line(0,-1){.071249}}
\multiput(73.645,34.604)(-.030078,-.080745){9}{\line(0,-1){.080745}}
\multiput(73.375,33.877)(-.03351,-.105595){7}{\line(0,-1){.105595}}
\multiput(73.14,33.138)(-.032978,-.124971){6}{\line(0,-1){.124971}}
\multiput(72.942,32.388)(-.032136,-.151732){5}{\line(0,-1){.151732}}
\multiput(72.782,31.63)(-.03078,-.19141){4}{\line(0,-1){.19141}}
\put(72.659,30.864){\line(0,-1){.7708}}
\put(72.573,30.093){\line(0,-1){1.5495}}
\put(72.517,28.544){\line(0,-1){.7749}}
\put(72.547,27.769){\line(0,-1){.7726}}
\put(72.614,26.996){\line(0,-1){.7683}}
\multiput(72.719,26.228)(.028626,-.152434){5}{\line(0,-1){.152434}}
\multiput(72.862,25.466)(.030085,-.125698){6}{\line(0,-1){.125698}}
\multiput(73.043,24.712)(.031064,-.10634){7}{\line(0,-1){.10634}}
\multiput(73.26,23.967)(.031733,-.091595){8}{\line(0,-1){.091595}}
\multiput(73.514,23.235)(.032184,-.079929){9}{\line(0,-1){.079929}}
\multiput(73.804,22.515)(.032475,-.070422){10}{\line(0,-1){.070422}}
\multiput(74.129,21.811)(.032641,-.062487){11}{\line(0,-1){.062487}}
\multiput(74.488,21.124)(.032707,-.055736){12}{\line(0,-1){.055736}}
\multiput(74.88,20.455)(.0326896,-.0498987){13}{\line(0,-1){.0498987}}
\multiput(75.305,19.806)(.0326008,-.0447826){14}{\line(0,-1){.0447826}}
\multiput(75.762,19.179)(.0324499,-.0402471){15}{\line(0,-1){.0402471}}
\multiput(76.248,18.575)(.0322439,-.0361869){16}{\line(0,-1){.0361869}}
\multiput(76.764,17.996)(.0319885,-.0325216){17}{\line(0,-1){.0325216}}
\multiput(77.308,17.444)(.035649,-.0328376){16}{\line(1,0){.035649}}
\multiput(77.878,16.918)(.0397053,-.0331106){15}{\line(1,0){.0397053}}
\multiput(78.474,16.421)(.0442377,-.0333364){14}{\line(1,0){.0442377}}
\multiput(79.093,15.955)(.0493516,-.0335098){13}{\line(1,0){.0493516}}
\multiput(79.735,15.519)(.055188,-.033624){12}{\line(1,0){.055188}}
\multiput(80.397,15.116)(.061939,-.03367){11}{\line(1,0){.061939}}
\multiput(81.078,14.745)(.069875,-.033635){10}{\line(1,0){.069875}}
\multiput(81.777,14.409)(.079386,-.033501){9}{\line(1,0){.079386}}
\multiput(82.492,14.107)(.091058,-.033242){8}{\line(1,0){.091058}}
\multiput(83.22,13.841)(.105812,-.032817){7}{\line(1,0){.105812}}
\multiput(83.961,13.612)(.125184,-.032158){6}{\line(1,0){.125184}}
\multiput(84.712,13.419)(.15194,-.031141){5}{\line(1,0){.15194}}
\put(85.472,13.263){\line(1,0){.7664}}
\put(86.238,13.145){\line(1,0){.7713}}
\put(87.009,13.065){\line(1,0){.7743}}
\put(87.784,13.023){\line(1,0){.7755}}
\put(88.559,13.019){\line(1,0){.7747}}
\put(89.334,13.053){\line(1,0){.7721}}
\put(90.106,13.126){\line(1,0){.7676}}
\multiput(90.874,13.236)(.152243,.029624){5}{\line(1,0){.152243}}
\multiput(91.635,13.384)(.125498,.030908){6}{\line(1,0){.125498}}
\multiput(92.388,13.57)(.106134,.03176){7}{\line(1,0){.106134}}
\multiput(93.131,13.792)(.091385,.032332){8}{\line(1,0){.091385}}
\multiput(93.862,14.051)(.079717,.032707){9}{\line(1,0){.079717}}
\multiput(94.579,14.345)(.070207,.032936){10}{\line(1,0){.070207}}
\multiput(95.281,14.674)(.062272,.03305){11}{\line(1,0){.062272}}
\multiput(95.966,15.038)(.055521,.033072){12}{\line(1,0){.055521}}
\multiput(96.633,15.435)(.0496834,.0330159){13}{\line(1,0){.0496834}}
\multiput(97.279,15.864)(.044568,.0328935){14}{\line(1,0){.044568}}
\multiput(97.903,16.325)(.0400336,.0327129){15}{\line(1,0){.0400336}}
\multiput(98.503,16.815)(.0359748,.0324804){16}{\line(1,0){.0359748}}
\multiput(99.079,17.335)(.0323113,.0322009){17}{\line(1,0){.0323113}}
\multiput(99.628,17.882)(.0326033,.0358635){16}{\line(0,1){.0358635}}
\multiput(100.15,18.456)(.0328497,.0399214){15}{\line(0,1){.0399214}}
\multiput(100.642,19.055)(.0330458,.0444552){14}{\line(0,1){.0444552}}
\multiput(101.105,19.677)(.0331857,.0495702){13}{\line(0,1){.0495702}}
\multiput(101.536,20.322)(.033262,.055407){12}{\line(0,1){.055407}}
\multiput(101.936,20.987)(.033263,.062159){11}{\line(0,1){.062159}}
\multiput(102.301,21.67)(.033176,.070094){10}{\line(0,1){.070094}}
\multiput(102.633,22.371)(.03298,.079604){9}{\line(0,1){.079604}}
\multiput(102.93,23.088)(.032645,.091274){8}{\line(0,1){.091274}}
\multiput(103.191,23.818)(.032123,.106025){7}{\line(0,1){.106025}}
\multiput(103.416,24.56)(.031337,.125392){6}{\line(0,1){.125392}}
\multiput(103.604,25.313)(.030145,.15214){5}{\line(0,1){.15214}}
\put(103.755,26.073){\line(0,1){.7672}}
\put(103.868,26.84){\line(0,1){.7718}}
\put(103.943,27.612){\line(0,1){1.1377}}
\multiput(72.75,29.5)(1.0333333,-.0333333){30}{\line(1,0){1.0333333}}
\put(72.5,29.5){\line(-1,0){10.25}}
\put(17,40.75){\circle*{1.118}}
\put(25,28.75){\circle*{1.118}}
\put(15,20){\circle*{1.118}}
\put(62.75,29.25){\circle*{1.118}}
\put(84.75,29){\circle*{1.118}}
\put(67.25,32){\makebox(0,0)[cc]{$e_3$}}
\put(78.75,31.5){\makebox(0,0)[cc]{$e_4$}}
\put(95.25,30.5){\makebox(0,0)[cc]{$q$}}
\put(88.25,40.25){\makebox(0,0)[cc]{$\pi_2$}}
\put(87.25,17.25){\makebox(0,0)[cc]{$\pi_1$}}
\put(64.25,27.25){\vector(-1,0){.07}}\multiput(70.5,27.5)(-.78125,-.03125){8}{\line(-1,0){.78125}}
\put(81.5,27.75){\vector(1,0){.07}}\multiput(75.25,28)(.78125,-.03125){8}{\line(1,0){.78125}}
\put(98,27.25){\vector(1,0){.07}}\multiput(87.75,28)(.4456522,-.0326087){23}{\line(1,0){.4456522}}
\put(67.25,42.5){\vector(2,-1){.07}}\qbezier(49.25,41.75)(57.75,47.875)(67.25,42.5)
\put(21.25,23){\makebox(0,0)[cc]{$e_1$}}
\end{picture}

\end{center}

We repeat this process until $p_1$ and $p_2^{-1}$ have been identified.  We claim that this process of identifying $p_1$ and $p_2^{-1}$ decreases the number of $h$-paths in $\hat{\Delta}'$ that are not contained in a contiguous pair of $h$-paths.  Consider the set $\Psi$ of $h$-paths that were edge adjacent to $p_1$ or $p_2^{-1}$ before any folding surgeries took place.  Note that before the folding surgeries are performed, no $h$-path in $\Psi$ is contiguous to another $h$-path in $\hat{\Delta}'$.  Also note that if an $h$-path $p_3$ of $\hat{\Delta}'$ is not contained in $\Psi$, then the folding surgeries performed to identify $p_1$ and $p_2^{-1}$ have no effect on which $h$-paths are edge adjacent to $p_3$ .  Therefore no pairs of contiguous $h$-paths become non-contiguous as a result of these folding surgeries, and the process of identifying $p_1$ and $p_2^{-1}$ decreases the number of $h$-paths of $\hat{\Delta}'$ that are not contained in a contiguous pair of $h$-paths by at least two.\\

Since there are only finitely many $h$-paths in $\hat{\Delta}'$, this process of transforming pairs of non-contiguous strongly adjacent $h$-paths into pairs of contiguous $h$-paths must terminate after some finite number of identifications.  At this point $\hat{\Delta}'$ will contain no more pairs of non-contiguous strongly adjacent $h$-paths.\\

We define an equivalence relation $\equiv_e$ on the set of $h$-paths of $\hat{\Delta}'$: if $p_{1},p_{2}$ are $h$-paths in $\hat{\Delta}'$, then $p_{1}\equiv_e p_{2}$ if there is a sequence of $h$-paths $p_{i_1}\dots p_{i_n}$ such that $p_{i_1}=p_1$, $p_{i_n}=p_2$, and $p_{i_j}$ is edge adjacent to $p_{i_{j+1}}$.\\

We now prove that if $\hat{\Delta}'$ contains no pairs of non-contiguous strongly adjacent $h$-paths then $\hat{\Delta}'$ contains no pairs of non-contiguous edge adjacent $h$-paths.  Suppose towards contradiction that $\hat{\Delta}'$ contains a pair of non-contiguous edge adjacent $h$-paths.  Then the there is a $\equiv_e$ equivalence class $\mathcal{E}_1$ that contains more than 2 elements.  Note that $\mathcal{E}_1$ contains no pairs of contiguous $h$-paths and therefore no pairs of strongly adjacent $h$-paths.\\

We draw an undirected graph $G$ on the spherical diagram $\hat{\Delta}'$ as follows.  For each 2-cell $\pi$ of $\hat{\Delta}'$, we place a vertex $v_{\pi}$ in the interior of $\pi$.  Let $p^{\pi}_1,\dots p^{\pi}_n$ be the $h$-paths in $\partial\pi$ and let $v^{\pi}_i$ be the initial vertex of $p^{\pi}_i$ in $\partial{\pi}$.  Set $V_{\pi}=\{v_{\pi},v^{\pi}_1\dots v^{\pi}_n\}$.  The vertices of $G$ are $\cup_{\pi\in\hat{\Delta}'} V_{\pi}$.  We draw non-intersecting undirected edges in the interior of each $\pi$ connecting $v_\pi$ with each $v^{\pi}_i$.  These are the edges of $G$.

\begin{center}

\unitlength 1mm 
\linethickness{0.4pt}
\ifx\plotpoint\undefined\newsavebox{\plotpoint}\fi 
\begin{picture}(131.25,73)(0,0)
\put(18.625,48.5){\oval(34.25,22.5)[]}
\put(62.375,48.75){\oval(34.25,22.5)[]}
\put(3.75,57.5){\circle*{1.803}}
\put(47.5,57.75){\circle*{1.803}}
\put(33.5,57.25){\circle*{1.803}}
\put(77.25,57.5){\circle*{1.803}}
\put(34,39.75){\circle*{1.803}}
\put(77.75,40){\circle*{1.803}}
\put(3.75,39.5){\circle*{1.803}}
\put(47.5,39.75){\circle*{1.803}}
\put(18.5,48){\circle*{1.803}}
\put(62.25,48.25){\circle*{1.803}}
\put(109,48.5){\circle*{1.803}}
\multiput(47.75,57.75)(.0528846154,-.0336538462){260}{\line(1,0){.0528846154}}
\multiput(61.5,49)(.0634920635,.0337301587){252}{\line(1,0){.0634920635}}
\multiput(62.25,48.5)(.063043478,-.033695652){230}{\line(1,0){.063043478}}
\multiput(62.25,48)(-.0602040816,-.0336734694){245}{\line(-1,0){.0602040816}}
\qbezier(109,49.5)(97.75,73)(111.5,63.5)
\qbezier(111.5,63.5)(114.875,61.375)(108.75,49.75)
\qbezier(109,49.5)(131.25,37)(122.5,49.5)
\qbezier(122.5,49.5)(118.25,55.5)(109,49.5)
\qbezier(109,49.5)(118.375,27.25)(107.25,37)
\qbezier(107.25,37)(102.375,41.5)(109,49)
\qbezier(109,49)(91,62.25)(93,48.5)
\qbezier(93,48.5)(93.5,42.875)(109,48.75)
\put(17.25,57.75){\makebox(0,0)[cc]{$p_1$}}
\put(33.5,48.25){\makebox(0,0)[cc]{$p_2$}}
\put(18,39.25){\makebox(0,0)[cc]{$p_3$}}
\put(3.25,47.75){\makebox(0,0)[cc]{$p_4$}}
\put(47.25,48.25){\makebox(0,0)[cc]{$p_4$}}
\put(61,58.75){\makebox(0,0)[cc]{$p_1$}}
\put(77.25,49){\makebox(0,0)[cc]{$p_2$}}
\put(61.75,39.5){\makebox(0,0)[cc]{$p_3$}}
\put(108,63.5){\makebox(0,0)[cc]{$p_1$}}
\put(120.75,47.75){\makebox(0,0)[cc]{$p_2$}}
\put(109.5,38.25){\makebox(0,0)[cc]{$p_3$}}
\put(95,50.5){\makebox(0,0)[cc]{$p_4$}}
\put(21,47.75){\makebox(0,0)[cc]{$v_{\pi}$}}
\end{picture}

\end{center}

We regard $G$ as a subspace of $\hat{\Delta}'$ and contract $G$ to a point.  This contraction of $G$ transforms $\hat{\Delta}'$ into a 2-complex $\Upsilon$ that consists of finitely many spherical subcomplexes $\Upsilon_1,\dots \Upsilon_n$ such that for $i\neq j$, $\Upsilon_i\cup\Upsilon_j$ is either a single point or the empty set.  Note that each $\Upsilon_i$ is a spherical diagram over the presentation $\langle a,b \| h(g), g\in Z \rangle$; a presentation which satisfies the small cancelation condition $C'(\frac{1}{10})$.  We define the boundaries of the 2-cells of $\Upsilon$ to be {\em $h$-paths of $\Upsilon$}.\\

There is a natural bijection between the $h$-paths of $\hat{\Delta}'$ and those of $\Upsilon$.  If an $h$-path of $\hat{\Delta}'$ corresponds under this bijection with an $h$-path of $\Upsilon$, then we refer to these two $h$-paths interchangeably.  Since the contraction of $G$ does not affect edge adjacency of pairs of $h$-paths, if a pair of $h$-paths are edge adjacent in $\hat{\Delta}'$, then those two $h$-paths are edge adjacent in $\Upsilon$ and therefore contained in the same $\Upsilon_i$.  Thus for each $\Upsilon_i$ there is a corresponding $\equiv_e$ equivalence class $\mathcal{E}_i$ such the $h$-paths that form the boundaries of the 2-cells in $\Upsilon_i$ are exactly the $h$-paths in $\mathcal{E}_i$.  Let $\Upsilon_1$ be the subcomplex that contains the $h$-paths of the aforementioned equivalence class $\mathcal{E}_1$.\\

Since no $h$-paths in $\mathcal{E}_1$ are strongly adjacent, $\Upsilon_1$ does not contain a reducible pair of cells.  This is a contradiction, since it follows from the well-known Greendlinger's Lemma \cite{LS} that a spherical diagram over a $C'(\frac{1}{10})$ presentation must contain a reducible pair of 2-cells.\\

We conclude that after performing the folding surgeries described above, the diagram $\hat{\Delta}'$ contains no pairs of non-contiguous edge adjacent $h$-paths.  Since every $h$-path in a spherical $P'_1(M_{\infty})$ diagram is edge adjacent to another $h$-path, it follows that every $h$-path of $\hat{\Delta}'$ is contiguous to another $h$-path of $\hat{\Delta}'$.  Also, since $\hat{\Delta}'$ is spherical, every edge of $\hat{\Delta}'$ is contained in an $h$-path. If we now delete the 2-cell $\pi_0$ from $\hat{\Delta}'$, the resulting object is a $P_1'(M_{\infty})$ diagram with boundary label $w'$ in which every edge is contained in an $h$-path and no two non-contiguous $h$-paths are adjacent.  Additionally, this diagram has the same area as $\Delta'$.  This proves the lemma.

\end{proof}

\begin{lemma}\label{Dehnfless}

$f\preceq f'.$

\end{lemma}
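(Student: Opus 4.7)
The plan is to convert a minimal $P'_1(M_\infty)$ diagram for $h(w_p)$ into a $P_1(M_\infty)$ diagram for $w_p$ of equal area, via the folding surgery procedure of Lemma \ref{fix3} followed by the collapse in Lemma \ref{fix1}, where $w_p$ denotes the strictly positive representative of a given trivial word $w$. First I would take an arbitrary word $w$ trivial in $P_1(M_\infty)$ and form its strictly positive representative $w_p$. Since $w_p$ differs from $w$ by at most $|w|$ applications of the cancellation relators $g\tilde{g}$, we get
\[
L_{P_1(M_\infty)}(w) \;\le\; L_{P_1(M_\infty)}(w_p) + |w|,
\]
so the task reduces to estimating $L_{P_1(M_\infty)}(w_p)$.

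Next, since $h$ is an embedding (Lemma \ref{citedstuff}), $h(w_p)$ is trivial in $P'_1(M_\infty)$, and a minimal area $P'_1(M_\infty)$ diagram $\Delta'$ with boundary label $h(w_p)$ has area at most $f'(|h(w_p)|)$. Because $w_p$ is strictly positive, Lemma \ref{fix3} yields a $P'_1(M_\infty)$ diagram $\tilde{\Delta}'$ of the same area with the same boundary label, in which every edge lies on an $h$-path and no two non-contiguous $h$-paths are adjacent. Lemma \ref{fix1} then produces a $P_1(M_\infty)$ diagram $\Delta$ of that same area whose boundary label $v$ satisfies $h(v) = h(w_p)$; injectivity of $h$ on $F(Z)$ forces $v = w_p$, and consequently
\[
L_{P_1(M_\infty)}(w_p) \;\le\; f'(|h(w_p)|).
\]

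Combining the two inequalities gives $L_{P_1(M_\infty)}(w) \le f'(|h(w_p)|) + |w|$. I would finish by controlling the length of $h(w_p)$ in terms of $|w|$: the defining formula for $h$, together with a bound on the indices of the generators of $P_1(M_\infty)$ that can occur in a trivial word of length $n$ (such a bound coming from the structural constraints that the relator set of $P(M_\infty)$ places on which generators can participate in short relations), yields $|h(w_p)| \le C|w|$ for an appropriate constant $C$, whence $f(n) \le f'(Cn) + n$ and therefore $f \preceq f'$. The main obstacle in executing this plan is precisely this last length estimate, since $|h(g_i)|$ grows linearly in the index $i$; the preceding steps are direct applications of the lemmas established earlier in this section.
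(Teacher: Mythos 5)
There is a genuine gap, and you correctly identify where it is: the claimed bound $|h(w_p)|\le C|w|$ for arbitrary trivial words $w$ in $P_1(M_\infty)$ simply does not hold, and no "structural constraint" on which generators can appear in short relations will rescue it. For every index $i$ the word $g_i\tilde g_i$ is a relator and hence a trivial word of length~$2$, yet $|h(g_i\tilde g_i)|$ grows without bound in $i$. So it is false that trivial words of bounded length only involve generators of bounded index, and the direct comparison $f(n)\le f'(Cn)+n$ cannot be established this way. Your first two steps (using Lemma \ref{fix3} to straighten the diagram and Lemma \ref{fix1} to pull it back to a $P_1(M_\infty)$ diagram of equal area) are fine and are also used in the paper, but the strategy of bounding $L_{P_1(M_\infty)}(w)$ via $f'(|h(w_p)|)$ for all $w$ does not close because the embedding $h$ distorts lengths unboundedly.

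The paper sidesteps this by never comparing $f$ and $f'$ directly on arbitrary words. Instead it uses the already-known equivalence $f\equiv T^4$ (from Theorem \ref{smalldeal} together with Lemma \ref{obvious}) and reduces the problem to showing $T^4\preceq f'$. For that it suffices to produce, for each $n$, a single witness word in $P'_1(M_\infty)$ of length $O(n)$ whose minimal area is at least $O(T^4(n))$. The witness is $h(\mathcal{K}(\mathbf{u})^+)$ for the hard input configuration $\mathbf{u}$ provided by \cite[Lemma 12.1]{BRS}; Lemmas \ref{fix1} and \ref{fix3} give the area lower bound exactly as in your proposal. The crucial point that saves the length estimate is that $\mathcal{K}(\mathbf{u})$ is an \emph{input disc label}: since the input alphabet of $M_\infty$ is finite, input disc labels form a language over a finite sub-alphabet of the generators, so only finitely many $g_i$ occur, the indices are bounded, and $|h(\mathcal{K}(\mathbf{u})^+)|=O(|\mathcal{K}(\mathbf{u})^+|)=O(n)$ holds with a uniform constant. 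Restricting to this witness family is exactly the ingredient your proposal is missing.
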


\begin{proof}

If $\textbf{u}$ is an input word of $M_{\infty}$, then we call $\mathcal{K}(\textbf{u})$ an {\em input disc label}.  Since the input alphabet for $M_{\infty}$ is finite, the set of input configurations of $M_{\infty}$ is a language over a finite alphabet.  Therefore the set of input disc labels in $P_1(M_{\infty})$ is a language over a finite alphabet.   Therefore the lengths of these input disc labels (and the lengths of their strictly positive representations in $P_1(M_{\infty})$) differ from the lengths of their $h$-images by at most a constant factor.\\

Let $n>0$.  In the proof of \cite[Lemma 12.1]{BRS}, the authors show that there exists an acceptable input configuration $\textbf{u}$ of $M_{\infty}$ such that $\textbf{u}\leq n$ and the area of the minimal area $P(M_{\infty})$ diagram for $\mathcal{K}(\textbf{u})$ is greater than $O(T^4(n))$.  Let $\mathcal{K}(\textbf{u})^+$ be the strictly positive word in $P_1(M_{\infty})$ representing the element $\mathcal{K}(\textbf{u})$.  By Lemma \ref{obvious}, the area of the minimal area $P_1(M_{\infty})$ diagram for $\mathcal{K}(\textbf{u})^+$ is greater than $O(T^4(n))$.  Therefore, by Lemmas  \ref{fix1} and \ref{fix3}, the minimal area $P'_1(M_{\infty})$ diagram for $h(\mathcal{K}(\textbf{u})^+)$ has area greater than $O(T^4(n))$.  By the definitions of $\mathcal{K}$ and $h$ and the observation in the previous paragraph, $|h(\mathcal{K}(\textbf{u})^+)|=O(n)$.  Therefore $T^4\preceq f'$.  Since $f\equiv T^4$, this implies that $f\preceq f'$.\\

\end{proof}

We can now prove Theorem \ref{main}.  Let $M_{\infty}$ be as in the statement of Theorem \ref{main}.  We first observe that $h\circ \mathcal{K}:A^*\rightarrow \{a^{\pm1},b^{\pm1}\}^*$ is an injective map.  The presentation $P'_1(M_{\infty})$ is finitely generated and, by Lemma \ref{finaldecidable}, decidable.  By Lemma \ref{finalminimal}, $P'_1(M_{\infty})$ is minimal.  Part 1 of Theorem \ref{main} follows from Lemmas \ref{smalldeal} and \ref{obvious}. Part 2 follows from Lemmas \ref{Psolv} and \ref{obvious}.  Part 3 follows from Theorem \ref{smalldeal} and Lemmas \ref{obvious}, \ref{Dehnfless}, and \ref{Dehnfmore}. As for part 4, suppose $\textbf{u}\in L^*$ and that a minimal length accepting $M_{\infty}$ computation for $\textbf{u}$ has length $\ell(\textbf{u})$.  Then by Lemma \ref{smalldeal} part 3, the  minimal area $P(M_{\infty})$ diagram for $\mathcal{K}(\textbf{u})$ has area equal to $O(\ell(\textbf{u})^4)$.  Suppose that $w_1$ is the strictly positive word representing $\mathcal{K}(\textbf{u})$ in $P_1(M_{\infty})$.  By Lemma \ref{obvious} the minimal area $P_1(M_{\infty})$ diagram for $w_1$ has area equal to $O(\ell(\textbf{u})^4)$.  By Lemmas \ref{fix1} and \ref{fix3}, and the fact that $|h(w_1)|=O(|w_1|)$ (since $\mathcal{K}(\textbf{u})$ is an input disc label), the minimal area $P'_1(M_{\infty})$ diagram for $h(w_1)$ has area $O(\ell(\textbf{u})^4)$.  Since $h(w)$ can be transformed into $h(w_1)$ by the application of at most $|w|$ many $P'_1(M_{\infty})$ relators, we conclude that the minimal $P'_1(M_{\infty})$ diagram for $h(w)$ has area $O(\ell(\textbf{u})^4)$.\\

\bibliography{paper}
\bibliographystyle{plain}

\end{document}